\newtheorem{theorem}{Theorem}[section]
\newtheorem{lemma}[theorem]{Lemma}
\newtheorem{corollary}[theorem]{Corollary}
\newtheorem{definition}{Definition}
\newtheorem{conjecture}{Conjecture}
\newcommand{\gnp}{\ensuremath{G(n,p)}}
\newcommand{\E}{\ensuremath{\mathbb{E}}}
\newcommand{\supGr}{\ensuremath{\mathcal{H}}}
\newcommand{\supGNP}{\ensuremath{\supGr(n,p)}}
\newcommand{\supFact}{\ensuremath{\supGr\mbox{-Factor}}}
\newcommand{\nV}{\ensuremath{v_H}}
\newcommand{\nE}{\ensuremath{e_H}}
\newcommand{\nVH}{\ensuremath{k_{\mathcal{H}}}}
\newcommand{\nEH}{\ensuremath{h_{\mathcal{H}}}}
\newcommand{\entropyFunction}{\ensuremath{h}}
\newcommand{\sumOverPhiPolynomialNotH}{\ensuremath{\tau}}
\newcommand{\wxy}{\ensuremath{w_{x,y}}}
\newcommand{\wyx}{\ensuremath{w_{y,x}}}
\newcommand{\weight}{\ensuremath{w}}
\newcommand{\nVmult}{\ensuremath{v_\supGr}}
\newcommand{\set}[1]{ \left\{ #1 \right\} }
\newcommand{\BB}{B}
\newcommand{\ignore}[1]{\relax}
\renewcommand{\th}{\operatorname{th}}
\newcommand{\Order}{\mathcal{O}}
\newcommand{\Rucinski}{Ruci{\'n}ski\xspace}
\newcommand{\genericScale}{1}
\newcommand{\genericThickness}{0.75mm}
\newcommand{\genericCirclePt}{4pt}
\newcommand{\comments}[1]{}
\begin{document}
\bibliographystyle{amsplain}

\title{Non-Vertex-Balanced Factors in Random Graphs}
\author{Stefanie Gerke%
\thanks{%
Mathematics Department, 
Royal Holloway College, 
University of London,
Egham TW20 0EX, England.}
\and
Andrew McDowell$^*$%
\thanks{%
\hbox{e-mail}~{\small\texttt{Andrew.McDowell.2010@live.rhul.ac.uk}}}
}
\date{\today}
\maketitle

\begin{abstract}
We prove part of a conjecture by Johansson, Kahn and Vu \cite{JKV} regarding threshold functions 
for the existence of an $H$-factor in a random graph \gnp.
We prove that the conjectured threshold function is correct for any graph $H$ 
which is not covered by its densest subgraphs.
% that contain at least one vertex that lies only in subgraphs less dense than the densest subgraph of $H$. 
We also demonstrate that the main result of \cite{JKV} 
% can be applied to fixed vertex partitions of \gnp, and 
generalises to multigraphs, digraphs,
and a multipartite model.

\end{abstract}

{\bf Keywords:} Random Graphs, Factors, Digraphs

\section{Introduction}

We will properly state our theorems later in the introduction,
after introducing necessary notation and background.
However, for readers already familiar with the background or
willing to momentarily gloss over the details,
let us immediately sketch our main results and methods.
In a recent breakthrough (winning a 2012 Fulkerson Prize),
Johansson, Kahn, and Vu \cite{JKV}
determined the threshold for a random graph $G$
to be factorable by a strictly balanced fixed graph $H$,
and they conjectured the threshold for every $H$.
Our main result, Theorem \ref{MainTheoremIntro},
establishes their conjecture for `non-vertex-balanced' graphs $H$,
a class of graphs disjoint from strictly balanced ones.
The positive side is the difficult one,
and the main idea of the proof is, to cover a fraction of $G$
with copies of a densest subgraph of $H$,
then contract that subgraph to a point, extend the cover, and repeat.
However, after the first step, two things have changed:
the graph $H$ may have become a multigraph and, more significantly,
we have committed to correspondences between some vertices of $G$ and $H$.
We manage these difficulties through
Theorem \ref{MainTheoremPartionMultigraphResult},
asserting that, if the vertices of the random graph
$G$ are partitioned into classes corresponding to vertices of $H$,
then $G$ almost surely has an $H$-factor which respects the partitioning.
Our proof follows the steps of the proof in \cite{JKV};
it is not especially inventive, but neither is it easy.
Krivelevich \cite{2010arXiv1007.2326K_Spanning_trees_Krivelevich}
needed a special case and verified it, but
\cite{2010arXiv1007.2326K_Spanning_trees_Krivelevich}
does not include the proof.
The result is clearly useful,
and it is therefore worth writing down the proof details.
We prove a similar result for directed graphs.

Formally, for graphs $H$ and $G$, an \emph{$H$-factor} of $G$ is a collection of vertex-disjoint copies of $H$ in $G$ that  form a vertex cover of $G$.  
Clearly $G$ can only contain an $H$-factor if $|V(H)|$ divides $|V(G)|$. We are mainly interested in random graphs on $n$ vertices and 
and we assume throughout the paper that $|V(H)|$ divides $n$.

We call a function $f(n)$ a \emph{threshold} for a graph property $K$ if,
for an Erd\H{o}s-R\'{e}nyi random graph \gnp, (that is, the graph on $n$ vertices where each edge is present with probability $p$ independently of the absence or presence of any other edge)
\[
% \begin{array}{ll} 
\Pr(\gnp \mbox{ satisfies }K)\rightarrow 
\begin{cases}
1 & \text{if } p(n)=\omega(f(n)), \text{ and}  \\
0 & \text{if } p(n)=o(f(n)) .
\end{cases}
% 1 \mbox{ if } p(n)=\omega(f(n)), \mbox{ and}  \\
% \Pr(\gnp \mbox{ satisfies }K)\rightarrow 0 \mbox{ if }p(n)=o(f(n)).
% \end{array}
\]

Since containing an $H$-factor is an increasing property (that is, adding edges does not destroy any $H$-factor) it is well known that a threshold function exists, see for example \cite{0471175412_Janson_Luczak_Rucinski_RandomGraphs}. 
Note that a threshold is unique up to  multiplicative positive constants so we will use $\Theta$ notation  and with slight abuse of language we will speak of ``the" threshold.
 The study of thresholds % in \gnp\ 
for various classes of graphs $H$
has attracted considerable interest.
The distinctions center around density properties of $H$.
We define the \emph{density}  of a graph $H$ on at least two vertices, as
$$d(H)=\frac{|E(H)|}{|V(H)|-1} . $$
Let $m(H)$ be the maximum density of any subgraph of $H$, that is,
$$m(H) = \max \set{d(H') \colon H' \subseteq H, |H'| \geq 2} . $$
A graph $H$ is called \emph{balanced} if $m(H) = d(H)$,
i.e., if no subgraph of $H$ has density greater than that of $H$,
and \emph{strictly balanced} if every proper subgraph of $H$ 
has density smaller than that of $H$.

For any vertex $v$ of $H$,
define the \emph{local density} at $v$ to be the maximum density restricted
to subgraphs containing $v$,
% $m(v,H) = \max_{H' \subseteq H \colon v \in V(H'), |H'| \geq 2}\set{d(H')}$.
$$m(v,H) = \max \set{d(H') \colon H' \subseteq H, |H'| \geq 2, v \in V(H')} . $$
A graph $H$ is \emph{vertex balanced} if, for all $v \in H$, $m(v,H)=m(H)$.

Note that if $H$ is balanced then it is vertex balanced:
a densest subgraph of $H$ is $H$ itself, 
so $m(v,H)$ and $m(H)$ are both given by $H'=H$, for $m(v,H)=m(H)=d(H)$.
Taking the contrapositive,
if $H$ is non-vertex balanced then it is not balanced,
and not strictly balanced.
Graphs may thus be partitioned into 
those that are non-vertex balanced, 
those that are strictly balanced, 
and the rest 
(those that are vertex balanced but not strictly balanced).
An example of a non-vertex balanced graph is shown below.
\begin{center}
\pgfdeclarelayer{background}
\pgfsetlayers{background,main}
\begin{tikzpicture}[scale=\genericScale]

		\path (1,1) node (K1) {};
		\path (1,2) node (K2) {};
		\path (2,1) node (K3) {};
		\path (2,2) node (K4) {};

		\path (3,1) node (T1) {};
		\path (3,2) node (T2) {};

		\fill (K1.east) circle (\genericCirclePt);
		\fill (K2.east) circle (\genericCirclePt);
		\fill (K3.east) circle (\genericCirclePt);
		\fill (K4.east) circle (\genericCirclePt);

		\fill (T1.east) circle (\genericCirclePt);
		\fill (T2.east) circle (\genericCirclePt);

		\foreach \j in {1,2,3,4}
		{
			\foreach \i in {1,2,3,4}
			\draw[line width=\genericThickness] (K\j.east)--(K\i.east);
		}

		\foreach \j in {1,2}
		{
			\foreach \i in {1,2}
			\draw[line width=\genericThickness] (T\j.east)--(T\i.east);
		}

		\draw[line width=\genericThickness] (T2.east)--(K2.east);
		\draw[line width=\genericThickness] (T1.east)--(K3.east);

	\end{tikzpicture}

\label{fig:K4K3}
\end{center}

The thresholds for $H$-factors for various fixed graph $H$ have been of interest for a long time.  
The case $H=K_2$ is simply the threshold for $G$
to have a perfect matching which has been known since 1966 \cite{er66}, see also  \cite{BT85} for a more precise result. 
The next $H$-factor threshold result was for trees by {\L}uczak and Rucinski
\cite{DBLP:journals/siamdm/LuczakR91_Rucinski_TreeFactors}. 
Note that matchings and trees are vertex-balanced. 
For sub-classes of non-vertex-balanced graphs, the threshold is known  for graphs $H$ whose minimum degree is less than $m(H)$ 
\cite{
CambridgeJournals:1771816_Alon_Noga_yuster_Raphael_Factors, 0471175412_Janson_Luczak_Rucinski_RandomGraphs}. 
In 2008 
the seminal paper by Johansson, Kahn, and Vu \cite{JKV} 
determined the threshold for all strictly balanced graphs
(also resolving the so-called `Shamir's problem' on hypergraph matchings).
The special case of finding the threshold of an $H$-factor  for the strictly balanced graph  $H=K_3$ had been described by Janson, {\L}uczak and \Rucinski as one of the two
`most challenging, unsolved problems in the theory of random structures'
\cite[p.~96]{0471175412_Janson_Luczak_Rucinski_RandomGraphs}  
and was first posed by  \Rucinski in 1992
\cite{Andrzej1992185_MathcingCopiesOfAGraph_ByDistinctCopies}
(the second problem was 'Shamir's problem').
\ignore{
Two of
the most challenging, unsolved problems in the theory of random structures
are finding the thresholds for the existence of a K3-factor in the random
graph G(n,p) and for the existence of a perfect matching (a collection of n/3
disjoint triples) in a random 3-uniform hypergraph. The latter, known as
the Shamir problem, goes back to at least Schmidt and Shamir (1983). The
former, discussed in greater generality in the previous section, was probably
first stated in Rucinski (1992b).
Some believe that these two problems are immanently related and a solution
of one of them will yield a solution of the other one. Let us point out,
however, that in the hypergraph case the triples are (in the binomial model)
independent from each other, while the triangles of G(n,p) are not. What
certainly links is that, after a quiet period, recently significant
progress with respect to both of them. In this section an
account on this progress is given.
}

In their  paper Johansson, Kahn and Vu conjecture thresholds for all graphs $H$, depending 
on whether $H$ is vertex-balanced or not. 
We restate this formally as Conjecture \ref{MainConjecture} 
in Section \ref{Chapter:NewResults}. 
Our first main result
% Theorem \ref{MainTheoremIntro} 
establishes this conjecture 
for all graphs in the second category. More precisely, 
let $\th_H(n)$ be the threshold function for \gnp\ to contain an $H$-factor.  We prove the following.

\begin{theorem}\label{MainTheoremIntro}
If $H$ is non-vertex-balanced,
\[\th_H(n)=\Theta\left(n^{-1/m(H)}\right).\]
\end{theorem}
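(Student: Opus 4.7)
Let $H^*\subseteq H$ attain $d(H^*)=m(H)$. Any $H$-factor of $G$ contains $n/\nV$ pairwise vertex-disjoint copies of $H$, and hence $n/\nV$ vertex-disjoint copies of $H^*$. In $\gnp$ the expected number of copies of $H^*$ is of order $n^{v_{H^*}}p^{e_{H^*}}$, which is $o(n)$ whenever $p=o(n^{-1/d(H^*)})=o(n^{-1/m(H)})$. Markov's inequality then shows that whp $\gnp$ has too few copies of $H^*$ to host an $H$-factor, establishing the $1/m(H)$ lower bound.

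\textbf{Upper bound setup.} Assume $p=\omega(n^{-1/m(H)})$. Pick $H^*$ to be a minimum-order densest subgraph of $H$; then $H^*$ is automatically strictly balanced and $d(H^*)=m(H)$. Because $H$ is non-vertex-balanced, some vertex of $H$ lies outside every densest subgraph, so $T:=V(H)\setminus V(H^*)$ is non-empty.

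\textbf{Upper bound plan.} Following the informal sketch in the introduction, the strategy is to peel a copy of $H^*$ off each future $H$-copy, contract that copy to a point, and then solve the resulting partitioned multigraph factor problem. Concretely: by the standard sprinkling decomposition, couple $\gnp$ so that it contains the edge-disjoint union of two independent random graphs $G_1,G_2$, each of edge probability $\Theta(n^{-1/m(H)})$. Fix a partition of $V(G)$ into classes $\{V_u\}_{u\in V(H)}$ with $|V_u|=n/\nV$, interpreting $V_u$ as the pre-assigned role for vertex $u$ of $H$. Using $G_1$, find $n/\nV$ vertex-disjoint, partition-respecting copies of $H^*$ on $\bigcup_{u\in V(H^*)}V_u$; since $H^*$ is strictly balanced of density $m(H)$, this partitioned $H^*$-factor exists whp by the strictly balanced case of Theorem~\ref{MainTheoremPartionMultigraphResult} (essentially \cite{JKV} in partitioned form). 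Next, contract each selected $H^*$-copy to a super-vertex in $G_2$, and let $H'$ denote the multigraph obtained from $H$ by contracting $V(H^*)$ to a single vertex $v^*$. The contracted random multigraph inherits a natural partition with $n/\nV$ super-vertices (the class for $v^*$) plus, for each $t\in T$, the class $V_t$. An $H$-factor of $\gnp$ extending the selected $H^*$-copies is exactly a partition-respecting $H'$-factor of this contracted multigraph, which Theorem~\ref{MainTheoremPartionMultigraphResult} applied to $G_2$ produces whp.

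\textbf{Main obstacle.} The technical crux is checking that the contracted multigraph $H'$ together with its partition satisfies the density hypothesis of Theorem~\ref{MainTheoremPartionMultigraphResult} at probability $\Theta(n^{-1/m(H)})$: no relevant submultigraph of $H'$ may have density larger than $m(H)$. Submultigraphs of $H'$ pull back through the contraction to subgraphs of $H$ containing $V(H^*)$, and since $H^*$ was selected as a densest subgraph of $H$, these pullbacks have density at most $m(H)$; this should suffice, but the partitioned-multigraph density bookkeeping needs to be carried out carefully. A secondary nuisance is aligning the two sprinkled rounds so that the $H^*$-copies chosen from $G_1$ are compatible with the extension supplied by $G_2$; independence of the sprinkling handles this cleanly. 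Should $H'$ still be non-vertex-balanced in the partitioned multigraph sense, one iterates the same peel-and-contract step, which terminates after at most $\nV-1$ rounds since each peel strictly reduces the number of vertex classes.
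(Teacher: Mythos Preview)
Your lower bound is fine and matches the paper's in spirit.

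The upper bound has a genuine gap in the very first step. You fix the partition $\{V_u\}_{u\in V(H)}$ \emph{in advance} and then ask for a partition-respecting $H^*$-factor on $\bigcup_{u\in V(H^*)}V_u$ via the strictly balanced case of Theorem~\ref{MainTheoremPartionMultigraphResult}. But that theorem, for strictly balanced $H^*$ with $d(H^*)=m(H)$, gives threshold $\Theta\bigl(n^{-1/m(H)}(\log n)^{1/e(H^*)}\bigr)$, not $\Theta(n^{-1/m(H)})$. At $p=\omega(n^{-1/m(H)})$ you may still be below this threshold (e.g.\ $p=n^{-1/m(H)}\log\log n$), and then whp there is \emph{no} partition-respecting $H^*$-factor. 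The paper itself flags exactly this obstruction in the paragraph following Theorem~\ref{MainTheoremPartionMultigraphResult}: once positions are fixed for the dense pieces, the $\log$ term reappears.

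The paper's fix is to let the dense embedding choose the partition rather than the other way round. It first collapses \emph{all} maximum-density subgraphs of $H$ iteratively to obtain the multigraph $\supGr$ with $m(\supGr)<m(H)$ strictly, and sets $H'$ to be $H$ with the $\supGr$-edges removed. Because $H$ is non-vertex-balanced, $H'$ has isolated vertices, so $\delta(H')=0<m(H')=m(H)$ and the Alon--Yuster theorem (Theorem~\ref{mindegree_factors}) yields an $H'$-factor in the ordinary, unpartitioned $\gnp$ at threshold $n^{-1/m(H)}$ with no $\log$ term. This $H'$-factor \emph{determines} the partition, after which Theorem~\ref{MainTheoremPartionMultigraphResult} is applied to $\supGr$. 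Since $m(\supGr)$ is \emph{strictly} below $m(H)$, the $o(1)$ (or $\log$) slack in that theorem is absorbed, and $p=\omega(n^{-1/m(H)})$ suffices. Your density bookkeeping also stops one step short: you argue pulled-back submultigraphs have density at most $m(H)$, but ``at most'' is not enough---equality would again force a $\log$ term, which is why the paper collapses until strict inequality holds.
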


% Recalling that a non-vertex balanced graph is not strictly balanced,
% the threshold for such a graph is never given by \cite{JKV}.

% While strictly balanced graphs, for which this conjecture has been proved, are vertex-balanced, most of the previous factor results that had been established, mentioned above, prior to \cite{JKV}, were non-vertex-balanced, but we are now able to prove it for all such graphs. 

The main idea of the proof is, first,
to  embed the dense subgraphs of $H$, 
giving a `partial factor' covering
a corresponding proportion of the vertices of \gnp. 
We then collapse each such subgraph of $H$ to a single vertex,
giving a less dense strictly balanced graph (or possibly multigraph).
Finally, we extend the partial factor to a full factor
using Theorem~\ref{MainTheoremPartionMultigraphResult}, 
a generalisation to partitioned multigraphs
of the strictly balanced result of \cite{JKV}.

To state  Theorem~\ref{MainTheoremPartionMultigraphResult} 
we need some more  notation. 
Let $\nE=|E(H)|$, $\nV=|V(H)|$ and  $V(H)=\{x_1,x_2,\dots, x_{\nV}\}$. 
Define the $r$-fold blowup $\BB(H,r)$ of $H$
as an \nV-partite graph with parts $V_1, V_2,\dots, V_{\nV}$, each of size $r$, 
with an edge between $v_i\in V_i$ and $v_j\in V_j$ 
iff there is an edge between $x_i$ and $x_j$ in $H$. 
In a slight abuse of notation,
let $H(n,p)$ be the random subgraph of $\BB(H,n/\nV)$
obtained by retaining each edge with probability $p$.
% Define $H(n,p)$ to be a random \nV-partite graph, 
% with parts $V_1, V_2,\dots, V_{\nV}$ each of size $n/\nV$, 
% and an edge between vertices $v_i\in V_i$ and $v_j\in V_j$ with probability $p$ if there exists an edge in $H$ between $x_i$ and $x_j$ and 0 otherwise. 
% This can be thought as a random blow-up of $H$. 
Likewise, given a multigraph \supGr, 
we define the random multigraph \supGNP: 
the blowup $\BB(\supGr, n/\nVmult)$ has as many edges between 
$v_i \in V_i$ and $v_j \in V_j$ 
as there are edges between $x_i$ and $x_j$ in \supGr,
and again $\supGNP$ is the random subgraph of $\BB(\supGr,n/\nVmult)$
obtained by retaining each edge with probability $p$.
% as above, except that, where we have vertices in \supGr\ with multiple edges between them, we have an equal number of possible edges, each independently present with probability $p$, between the vertices of the corresponding partitions. Definitions of density are identical to those for multigraphs. 

The setup suggests looking for a `restricted' \supGr-factor of \supGNP\
where, for each copy of \supGr,
each vertex belongs to the corresponding part of \supGNP.
We will not in fact assume this, but the following theorem shows that
below some threshold there is no factor,
while above the threshold there is a factor of the restricted form.

\begin{theorem}\label{MainTheoremPartionMultigraphResult}
 \label{MainTheoremPartionResult}
Fix a multigraph \supGr
(which may be a simple graph $H$).
If \supGr\ is strictly balanced, 
then the threshold % function $\th_{\supGr}(n)$, 
for % the random $|V(\supGr)|$-partite multigraph 
\supGNP\ to contain an \supGr-factor is
\[\th_{\supGr}(n)=\Theta\left( n^{-1/m(\supGr)}(\log n)^{1/|E(\supGr)|} \right),\]
while if \supGr\ is not strictly balanced, the threshold satisfies
\[\th_{\supGr}(n)=\Order(n^{-1/m(\supGr)+o(1)}).\]
In both cases, above the threshold there is w.h.p.\
an \supGr-factor in which, in every copy of \supGr, 
each vertex is in the corresponding part of \supGNP.
\end{theorem}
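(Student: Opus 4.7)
The plan is to translate the theorem into a hypergraph-matching problem and then follow the scheme of Johansson--Kahn--Vu. Let $\mathcal{K}=\mathcal{K}(\supGr,n,p)$ be the hypergraph on vertex set $V(\BB(\supGr,n/\nVmult))$ whose hyperedges are the partition-respecting copies of $\supGr$ present in $\supGNP$. A perfect matching in $\mathcal{K}$ is precisely the restricted $\supGr$-factor we seek, so it suffices to show that (i) below the stated threshold no $\supGr$-factor of $\supGNP$ exists, and (ii) above it, $\mathcal{K}$ admits a perfect matching a.a.s.

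For the lower bound in the strictly balanced case I would use a first-moment calculation on uncovered vertices. The expected number of partition-respecting copies of $\supGr$ through a fixed vertex is $\Theta(p^{|E(\supGr)|}n^{\nVmult-1})$. At $p=c\,n^{-1/m(\supGr)}(\log n)^{1/|E(\supGr)|}$, strict balance gives $|E(\supGr)|/m(\supGr)=\nVmult-1$, so this expectation equals $c^{|E(\supGr)|}\log n$. A Janson-type estimate then shows that the probability a fixed vertex lies in no copy is $n^{-c^{|E(\supGr)|}(1+o(1))}$, making the expected number of uncovered vertices $\omega(1)$ when $c$ is small enough. Standard Poisson-paradigm concentration forces an uncovered vertex a.a.s., ruling out any $\supGr$-factor.

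For the upper bound I would mimic the JKV proof step by step with two cosmetic changes: (a) the hyperedges of $\mathcal{K}$ are partition-respecting embeddings, so the automorphism-based multiplicity that appears in the simple-graph, unrestricted setting is absent, and (b) parallel edges contribute independent products of $p$. Neither affects the exponents driving the argument, since $m(\supGr)$ and $d(\supGr')$ for $\supGr'\subseteq\supGr$ are defined identically for multigraphs, and the partite restriction preserves the relevant densities. The JKV roadmap---sharp concentration of partition-respecting subgraph counts via Janson's inequality, an entropy/Shearer argument yielding a fractional perfect matching of $\mathcal{K}$, and a P\'{a}lv\H{o}lgyi/Pippenger--Spencer-style nibble followed by absorption---carries through with only bookkeeping changes, and strict balance of $\supGr$ enters exactly as in \cite{JKV} to make contributions from proper sub-multigraphs negligible.

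For the non-strictly-balanced case the $n^{o(1)}$ slack above $n^{-1/m(\supGr)}$ is the key simplification. A densest proper sub-multigraph $\supGr'\subseteq\supGr$ has $d(\supGr')=m(\supGr)$, so the equalities among subgraph densities that are handled tightly in the strictly balanced argument become strict polynomial gains in $n$ once this slack is introduced; every union bound and variance estimate then holds with considerable room, and the same matching argument produces the required restricted $\supGr$-factor. The main obstacle throughout is technical rather than conceptual: reproducing the JKV entropy/nibble step in the partite multigraph setting requires a careful re-derivation of codegree and moment estimates, and in the strictly balanced case the exponents must be tracked exactly because the claimed threshold is tight up to constants.
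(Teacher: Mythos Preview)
Your lower-bound sketch is fine; the standard isolated-vertex first-moment/Janson calculation does give the correct order in the strictly balanced case, and the paper treats this direction as routine.

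The upper bound is where your proposal goes wrong. What you describe as ``the JKV roadmap''---a Shearer/entropy argument producing a fractional perfect matching, followed by a Pippenger--Spencer nibble and an absorption step---is \emph{not} the JKV method. The actual JKV argument, and the one the paper adapts, is an edge-removal martingale: one starts from the complete blowup $KM_n$ (which has $((n/\nV)!)^{\nVH-1}$ restricted factors), removes edges in a uniformly random order down to the target edge count $M$, and tracks $\log|\mathcal{F}_t|$ via the identity $\log|\mathcal{F}_t|=\log|\mathcal{F}_0|+\sum_{i\le t}\log(1-\xi_i)$, where $\xi_i$ is the fraction of surviving factors containing the $i$th removed edge. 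The work is in showing $\xi_i=o(\log^{-1}n)$ with very high probability, which is done by introducing auxiliary events $\mathcal{R}$ (Kim--Vu polynomial concentration of local copy counts) and $\mathcal{B}$ (no copy of $\supGr$ lies in far more factors than average), and proving $\Pr(\mathcal{A}\mathcal{R}\overline{\mathcal{B}})=n^{-\omega(1)}$ via an intermediate event $\mathcal{C}$ and the entropy lemma. Shearer's inequality appears only inside this last step, to compare $\log\Phi$ against a sum of vertex entropies; it is not used to build a fractional matching, and there is no nibble or absorber anywhere in the proof.

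So your proposal has a genuine gap: you have not engaged with the actual mechanism, and the scheme you outline (fractional matching $\to$ nibble $\to$ absorption) would require its own substantial development---in particular, an absorber construction for arbitrary strictly balanced multigraphs in the partite model at the sharp threshold, which is far from ``bookkeeping.'' The paper's contribution is precisely to verify that the edge-removal martingale, the Kim--Vu concentration bounds, and the $\mathcal{A}$--$\mathcal{R}$--$\mathcal{B}$--$\mathcal{C}$ chain all survive the passage to the partite multigraph model; the modifications are indeed mostly technical, but they are modifications of a very different argument from the one you sketched.
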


We note that there is a key difference between the partitioned and usual \gnp\ thresholds for non-strictly balanced graphs. In \gnp, we show that for non-vertex balanced $H$, the $o(1)$ term can be completely eliminated, while it remains in the form of a $\log$ term for  strictly balanced $H$. In the partitioned random graph however, there will always be a $\log$ term. This can be seen by considering the graph consisting of a triangle and a single isolated vertex.

\begin{center}
\pgfdeclarelayer{background}
\pgfsetlayers{background,main}
\begin{tikzpicture}[scale=\genericScale]

	%draw nodes from X20 to X41

		\foreach \j in {1,3,5}
		{
			\path (\j,1) node (X\j1) {};
			\fill (X\j1.east) circle (\genericCirclePt);
		}
		\foreach \j in {2}
		{
			\path (\j,2.5) node (X\j3) {};
			\fill (X\j3.east) circle (\genericCirclePt);
		}

			\draw[line width=\genericThickness ] (X11.east)--(X31.east);
			\draw[line width=\genericThickness] (X23.east)--(X31.east);
			\draw[line width=\genericThickness] (X11.east)--(X23.east);

			\typeout{end}

\end{tikzpicture}

\end{center}

In \gnp, this graph is easy to embed, as it is equivalent to a partial factor of triangles, taken over the whole graph, but only covering $3/4$ of the vertices. At this point, the remaining spare vertices immediately complete the factor. In the partitioned case, by fixing the position of these triangles, we are  implying the existence of a full triangle factor over those corresponding partitions, and as such, by the result for strictly balanced graphs, we will require a $\log$ term, corresponding to the densest subgraphs.

\comments{
Note that we cannot eliminate the $o(1)$ term 
from the threshold for non-strictly balanced graphs 
as there are values of $p$  satisfying $p=\Order(n^{-1/m(H)})$, \marginpar{what does this mean? For any constant C?} such that, 
with high probability, many vertices of \gnp\ will not be covered by copies 
of the densest subgraphs of $H$, 
and hence we will not be able to assign all vertices these `roles' 
in the factor at will. \marginpar{reference? Or is it obvious?}
}

\comments{
Theorem \ref{MainTheoremPartionMultigraphResult} is useful in its own right.
Given a fixed $\nV$-part equipartitioning of the vertices of $G(n,p)$,
Krivelevich \cite{2010arXiv1007.2326K_Spanning_trees_Krivelevich}
finds the threshold for a cycle factor in which each copy of the cycle 
is required to have one vertex in each part.
This is a special case of
Theorem \ref{MainTheoremPartionMultigraphResult}.
}

Lastly we prove that the threshold for digraph factors 
coincides with that for graphs which is an easy consequence of Theorem~\ref{MainTheoremPartionMultigraphResult}.
We define the random directed graph $D(n,p)$ with vertex set $V$ of size $n$, 
such that for each pair of vertices $u$ and $v$ in $V$, 
there is an edge between them with probability $p$ 
independently of all other edges, 
and each such edge is either $(u,v)$ or $(v,u)$, 
with probability half each. 
We can prove the threshold for both strictly balanced 
and non-vertex-balanced digraphs, in $D(n,p)$. 
Note that for strictly balanced graphs we can also prove the partitioned form, 
i.e., the digraph form of Theorem \ref{MainTheoremPartionResult} also holds.

\begin{theorem}\label{MainTheoremDigraphResult}
Fix a digraph $H$. If $H$ is strictly balanced, 
then the threshold function $\th_{H}(n)$, for the random directed graph $D(n,p)$ to contain an $H$-factor is
\[\th_{H}(n)=\Theta\left( n^{-1/m(H)}(\log n)^{1/\nE} \right),\]
	while if $H$ is non-vertex-balanced,
\[\th_{H}(n)=\Theta\left(n^{-1/m(H)} \right).\]
\end{theorem}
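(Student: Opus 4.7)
The plan is to reduce the directed setting to the undirected, partitioned results via a random-orientation observation. Let $H$ be a digraph with $V(H)=\{x_1,\dots,x_{\nV}\}$, and fix any equipartition of $V(D(n,p))$ into classes $V_1,\dots,V_{\nV}$ of size $n/\nV$. Define an auxiliary undirected graph $G^{\ast}$ on the same vertex set by placing $\{u,v\}$ (with $u\in V_i$, $v\in V_j$) into $G^{\ast}$ if and only if $H$ contains a directed edge between $x_i$ and $x_j$ and the edge between $u$ and $v$ in $D(n,p)$ is present with the matching orientation. Since each potential edge of $D(n,p)$ is present with probability $p$ and, when present, oriented uniformly at random, each potential edge of $G^{\ast}$ appears independently with probability $p/2$. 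Thus $G^{\ast}$ is distributed exactly as the partitioned random graph (in the sense of Theorem \ref{MainTheoremPartionMultigraphResult}) of the underlying simple graph $U(H)$ at density $p/2$, and any partition-respecting $U(H)$-factor of $G^{\ast}$ lifts automatically to an $H$-factor of $D(n,p)$ with the correct orientations.

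\textbf{Strictly balanced case.} If $H$ is strictly balanced then so is $U(H)$, with $m(U(H))=m(H)$ and $|E(U(H))|=\nE$, since dropping orientations preserves vertex and edge counts on every sub-digraph. Because $p/2$ exceeds $n^{-1/m(H)}(\log n)^{1/\nE}$ if and only if $p$ does, Theorem \ref{MainTheoremPartionMultigraphResult} supplies a partition-respecting $U(H)$-factor of $G^{\ast}$ w.h.p., which is the sought $H$-factor. An essentially identical argument, applied to the blowup interpretation already used for Theorem \ref{MainTheoremPartionMultigraphResult}, yields the partitioned strictly balanced multi-digraph analogue.

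\textbf{Non-vertex-balanced case.} Here we mirror the proof of Theorem \ref{MainTheoremIntro}. Since local and global densities of a digraph coincide with those of its underlying graph, $H$ is non-vertex-balanced precisely when $U(H)$ is, and $m(H)$ is realised by some proper densest sub-digraph $H^{\ast}$. Using the partitioned strictly balanced digraph result just derived, I would cover a suitable fraction of $V(D(n,p))$ by vertex-disjoint copies of $H^{\ast}$, contract each such copy to a single super-vertex to obtain a multi-digraph $\mathcal{H}'$ of strictly smaller maximum density, and iterate. The principle of deferred decisions keeps the unexposed edges of $D(n,p)$ distributed as $D(\cdot,p)$ on the surviving vertices, so the $p/2$ reduction remains available at every level. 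Since density strictly decreases under each contraction, the recursion terminates after a bounded number of steps and produces an $H$-factor of $D(n,p)$ w.h.p.

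\textbf{Lower bounds and main obstacle.} The matching $0$-statements are standard first- and second-moment computations: for strictly balanced $H$, the expected number of copies of $H$ through a fixed vertex of $D(n,p)$ is $\Theta(n^{\nV-1}(p/2)^{\nE})$, which falls below $c\log n$ once $p=o(n^{-1/m(H)}(\log n)^{1/\nE})$, so a Poisson approximation leaves some vertex uncovered; for non-vertex-balanced $H$, the expected total number of copies of a densest sub-digraph drops below $n/\nV$ when $p=o(n^{-1/m(H)})$, precluding an $H$-factor. I expect the main difficulty to lie in the non-vertex-balanced step: one must check that the blowup interpretation used to represent contracted multi-digraphs as partitioned random models is compatible with the single-orientation constraint of $D(n,p)$, and that the $p/2$ reduction commutes with each contraction. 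Fortunately this is a faithful transcription of the undirected bookkeeping already carried out in the proof of Theorem \ref{MainTheoremIntro}, so no genuinely new ideas are required.
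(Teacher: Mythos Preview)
Your reduction via the $p/2$ orientation trick is exactly what the paper does, and for the strictly balanced case your argument is correct and matches the paper's proof.

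The non-vertex-balanced case, however, has a genuine gap. You propose to embed copies of a densest sub-digraph $H^{\ast}$ by invoking ``the partitioned strictly balanced digraph result just derived''. But that result, applied to $H^{\ast}$ (even granting that $H^{\ast}$ is strictly balanced, which is not automatic), requires
\[
p \;\gg\; n^{-1/m(H^{\ast})}(\log n)^{1/|E(H^{\ast})|} \;=\; n^{-1/m(H)}(\log n)^{1/|E(H^{\ast})|},
\]
since $m(H^{\ast})=m(H)$. This is strictly above the target threshold $n^{-1/m(H)}$, so your iterative scheme would only establish $\th_H(n)=\Order\bigl(n^{-1/m(H)}(\log n)^{1/|E(H^{\ast})|}\bigr)$, not the claimed $\Theta(n^{-1/m(H)})$. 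The log factor is not a technicality you can absorb: it is precisely the obstacle that separates the strictly balanced threshold from the non-vertex-balanced one.

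The paper sidesteps this by \emph{not} using the strictly balanced factor result to place the dense pieces. Instead it appeals to a directed analogue of Ruci\'nski's partial-factor theorem (Theorem~\ref{partial_factors}) / the Alon--Yuster minimum-degree result (Theorem~\ref{mindegree_factors}): the auxiliary graph $H'$ consisting of the dense components together with isolated vertices has $\delta(H')=0<m(H')$, so an $H'$-factor exists already at $p=\omega(n^{-1/m(H)})$ with no log term. Only after this step, once the partition is fixed and the collapsed multigraph $\supGr$ has $m(\supGr)<m(H)$, is Theorem~\ref{MainTheoremPartionMultigraphResult} invoked via the $p/2$ reduction, and there the log factor in its threshold is harmless. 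To repair your argument you need to replace the strictly balanced tool in the first step by a directed partial-factor / low-minimum-degree result; the rest of your outline then goes through.
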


\section{Preliminaries}\label{Chapter:NewResults}
We first note the following known results which we will need later.

\begin{theorem}\label{partial_factors}
(\Rucinski \cite{Andrzej1992185_MathcingCopiesOfAGraph_ByDistinctCopies})
Let $H$ be a graph with at least one edge, and $F_H(\varepsilon,n)$ be threshold function for the property that \gnp\ contains a partial $H$-factor covering all but at most $\varepsilon n$ vertices. Then for any fixed $\varepsilon>0$ the threshold function satisfies,
\[F_H(\varepsilon,n)=\Theta\left(n^{-1/m(H)}\right).\]

\end{theorem}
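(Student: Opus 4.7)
The plan is to establish the standard two directions separately: the $0$-statement, that $p = o(n^{-1/m(H)})$ rules out such a partial factor w.h.p., and the $1$-statement, that $p = \omega(n^{-1/m(H)})$ produces one w.h.p. The quantity $m(H)$ enters on both sides because every copy of $H$ in a would-be partial factor contains a copy of a densest subgraph $H^*$, whose appearance threshold drives the bound.

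For the $0$-statement, I would apply a first-moment argument to a densest subgraph. Fix $H^* \subseteq H$ with $d(H^*) = m(H)$, having $v^*$ vertices and $e^* = m(H)(v^*-1)$ edges. The expected number of labeled copies of $H^*$ in \gnp\ is $O(n^{v^*} p^{e^*})$, which when $p = o(n^{-1/m(H)})$ equals $o(n^{v^* - (v^*-1)}) = o(n)$, so by Markov's inequality w.h.p.\ only $o(n)$ copies of $H^*$ appear. Any partial $H$-factor covering $(1-\varepsilon)n$ vertices, however, contains $(1-\varepsilon)n/\nV$ vertex-disjoint copies of $H$ and hence at least that many vertex-disjoint copies of $H^*$, giving $\Omega(n)$ copies and a contradiction.

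For the $1$-statement, set $p = C(n) n^{-1/m(H)}$ with $C(n) \to \infty$. I would split \gnp\ into $k = O(\log(1/\varepsilon))$ independent rounds of $G(n,p')$, each with $p'$ still of order $\omega(n^{-1/m(H)})$. In round $i$, working inside the set $U_i$ of vertices not yet covered by the growing partial factor, I would apply a near-perfect matching result (a Pippenger--Spencer or R\"odl-nibble argument) to the $\nV$-uniform hypergraph whose hyperedges are the vertex-sets spanning copies of $H$ in $G(n,p')[U_i]$. Standard subgraph-count estimates show this hypergraph has vertex-degrees of order $|U_i|^{\nV-1} (p')^{\nE}$, which stays $\omega(1)$ while $|U_i| \geq \varepsilon n$, producing a matching that covers a constant fraction of $U_i$; iterating drives $|U_i|$ below $\varepsilon n$.

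The main obstacle is controlling the pair-degrees of the copy-of-$H$ hypergraph: near the threshold, rare dense configurations built around the densest subgraph $H^*$ can make a small number of vertex pairs lie in unusually many copies of $H$, which would block a direct application of matching theorems. I would handle this by using a Janson- or Kim--Vu-type concentration estimate for subgraph counts to locate those few offending vertices and discard them into the $\varepsilon n$ uncovered slack; after this pruning the remaining hypergraph is regular enough for the nibble step in each round to succeed, completing the iteration.
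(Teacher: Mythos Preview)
The paper does not give a proof of this theorem: it is quoted as a known result of Ruci\'nski and only used as a black box (the paper remarks that the cited works actually prove stronger statements). There is therefore no in-paper argument to compare your proposal against.

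On its own merits, your $0$-statement is clean and correct: the first-moment count on a densest subgraph $H^*$ gives $o(n)$ copies when $p=o(n^{-1/m(H)})$, which is incompatible with the $\Omega(n)$ vertex-disjoint copies of $H^*$ that a $(1-\varepsilon)n$-covering partial factor would force.

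Your $1$-statement is a plausible outline of a standard route (multi-round sprinkling plus a nibble on the hypergraph of copies of $H$), and you correctly identify the codegree issue as the crux. Two points deserve tightening. First, the degree $|U_i|^{\nV-1}(p')^{\nE}$ you quote is only an expectation; you need a concentration input (e.g.\ Kim--Vu) to get approximate regularity in each round, and that already uses essentially the same machinery you invoke for the pruning step. Second, the pruning step is where the sketch is vaguest: the bad objects are pairs (or larger overlapping patterns governed by proper subgraphs of $H$), not single vertices, so you must argue that the number of such bad pairs is $o(n)$ in a way that lets you kill them by deleting $o(n)$ vertices into the $\varepsilon n$ slack. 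This is doable via the derivative bounds $\E_L f = O(n^{\nV - v'}p^{\nE - e'})$ for subgraphs $H'\subseteq H$, using $d(H')\le m(H)$, but it should be stated explicitly rather than left to ``Janson- or Kim--Vu-type'' hand-waving.
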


\begin{theorem}\label{mindegree_factors}
(Alon, Yuster \cite{CambridgeJournals:1771816_Alon_Noga_yuster_Raphael_Factors})
Let $H$ be a graph with minimum degree $\delta(H)$, satisfying $\delta(H) < m(H)$. Then 
\[th_H(n)=\Theta\left( n^{-1/m(H)}\right).\]

\end{theorem}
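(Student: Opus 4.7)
The lower bound $\th_H(n) = \Omega(n^{-1/m(H)})$ is a first-moment argument. Let $H^* \subseteq H$ realise $d(H^*) = m(H)$. Any $H$-factor of $\gnp$ contains at least $n/|V(H)|$ copies of $H^*$, one inside each copy of $H$. Since $|E(H^*)| = (|V(H^*)|-1)\,m(H)$, at $p = o(n^{-1/m(H)})$ we have
\[
  \E\bigl[\#H^*\text{-copies in }\gnp\bigr] = \Theta\bigl(n^{|V(H^*)|}\,p^{|E(H^*)|}\bigr) = o(n),
\]
so Markov yields $\Pr(\gnp \text{ has an }H\text{-factor}) \to 0$.

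For the upper bound, set $p = Cn^{-1/m(H)}$ with $C$ a large constant and split the edges of $\gnp$ by sprinkling into independent copies $G_1, G_2 \sim G(n,p_i)$ with each $p_i = \Theta(n^{-1/m(H)})$. Fix a vertex $v \in V(H)$ realising $\delta(H) < m(H)$. The plan is an absorbing-method argument built on top of Theorem~\ref{partial_factors}: apply that theorem to $G_1$ to produce a partial $H$-factor $\mathcal{F}_0$ whose uncovered set $U_0$ has at most $\varepsilon n$ vertices for a suitably small $\varepsilon > 0$, and in each copy of $\mathcal{F}_0$ designate the vertex in the role of $v$ as its \emph{$v$-slot}. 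The idea will then be to use the independent randomness of $G_2$ to swap the leftover vertices into $\mathcal{F}_0$ via the $v$-slots, exploiting that $v$ requires only $\delta(H) < m(H)$ edges to be incorporated into a copy of $H$.

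Concretely, for any vertex $u$ the expected number of copies $C \in \mathcal{F}_0$ whose $v$-slot neighbourhood (the $\delta(H)$ vertices playing the roles of $N_H(v)$) is joined to $u$ by $G_2$-edges is
\[
  |\mathcal{F}_0|\,p_2^{\delta(H)} \;=\; \Omega\bigl(n^{1 - \delta(H)/m(H)}\bigr) \;=\; n^{\Omega(1)},
\]
where the polynomial slack comes precisely from the gap $\delta(H) < m(H)$. Standard concentration together with a union bound over $u \in V$ then produces w.h.p.\ an abundance of candidate swaps for every vertex: each $u$ can be inserted as the new $v$-slot of many different copies $C \in \mathcal{F}_0$, each such swap replacing $C$ by the copy $C - w + u$ (where $w$ was the former $v$-slot). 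Running the now-classical absorbing framework on top of this swap structure — i.e.\ reserving a small portion of $\mathcal{F}_0$ as an absorbing stock before the partial-factor step and using the abundance of swap options to process $U_0$ while keeping divisibility intact — produces a full $H$-factor.

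The main obstacle is the final absorbing bookkeeping: one must simultaneously reserve enough absorbing structure to handle every possible leftover configuration and arrange that the vertices displaced by absorbing swaps can themselves be re-covered, rather than just rotating the leftover set among itself. The $n^{\Omega(1)}$ slack per vertex is exactly what makes this feasible, and is the quantitative manifestation of the strict inequality $\delta(H) < m(H)$; without it, the argument breaks down and a logarithmic factor reappears, matching the strictly balanced threshold of Theorem~\ref{MainTheoremPartionMultigraphResult}.
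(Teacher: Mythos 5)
First, note that the paper does not prove this statement at all: it is quoted as a known result of Alon and Yuster \cite{CambridgeJournals:1771816_Alon_Noga_yuster_Raphael_Factors}, so there is no in-paper proof to compare against. Your lower bound is fine: counting copies of a densest subgraph $H^*$ by the first moment and noting an $H$-factor needs $\Omega(n)$ of them is exactly the standard argument. The problem is the upper bound, where there is a genuine gap. The swap mechanism you describe never makes progress: replacing the current $v$-slot vertex $w$ of a copy $C\in\mathcal{F}_0$ by an uncovered vertex $u$ produces $C-w+u$ and leaves $w$ uncovered, so the uncovered set is only permuted, never shrunk. The appeal to "the now-classical absorbing framework" does not repair this, because for factors no single-vertex absorber can exist (covering one extra vertex changes the number of covered vertices by $1$, which is incompatible with divisibility by $\nV$), so an absorbing argument would have to process leftover vertices in groups of $\nV$ via genuinely more elaborate gadgets that you neither construct nor show to exist w.h.p.\ at $p=\Theta(n^{-1/m(H)})$. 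The $n^{\Omega(1)}$ count of candidate swaps per vertex is necessary for any such scheme but is not by itself sufficient; the missing combinatorial structure is precisely the content of the theorem.

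The repair (and, in essence, the Alon--Yuster route) is to leave the $v$-slots \emph{empty} rather than occupied: using the first round of edges and \Rucinski's theorem (Theorem \ref{partial_factors}) applied to $H-v$, find exactly $n/\nV$ vertex-disjoint copies of $H-v$, which cover $(\nV-1)n/\nV$ vertices and leave an uncovered set $U$ of size exactly $n/\nV$ (delete surplus copies to hit this count). Then, with the second round of edges, form the auxiliary bipartite graph between $U$ and the copies, joining $u$ to a copy $C$ when $u$ sends $G_2$-edges to all $\delta(H)$ images of $N_H(v)$ in $C$; distinct pairs use disjoint edge sets, so this is a random bipartite graph with edge probability $p_2^{\delta(H)}$, and since $\delta(H)<m(H)$ the expected degrees are $\Theta\bigl(n^{1-\delta(H)/m(H)}\bigr)=n^{\Omega(1)}\gg\log n$, so w.h.p.\ it has a perfect matching. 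Each matched pair completes a copy of $H-v$ into a copy of $H$, covering all of $U$ and yielding the $H$-factor. Your inequality $\delta(H)<m(H)$ enters exactly here, but through a perfect matching that fills empty slots, not through swaps into occupied ones.
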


In their respective papers, stronger results than what is stated above are actually proved, but we use threshold notation for consistency. 

The following two results can be found as Theorem 2.1 and 2.2 in \cite{JKV}. This paper's aim is to provide a generalisation of the first result, which allows for improved bounds on the second.
\begin{theorem}  \cite{JKV} \label{strictbal_factors}
Let $H$ be a strictly balanced graph with \nE\ edges. Then the threshold function, $\th_H(n)$ for \gnp\ to contain an $H$-factor satisfies
\[\th_H(n)=\Theta(n^{-1/d(H)}(\log n)^{1/\nE}).
\]

\end{theorem}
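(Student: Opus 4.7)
The plan is to prove the matching upper and lower bounds separately, with the lower bound being an elementary first-moment calculation and the upper bound requiring the full entropy machinery for which the result is famous.

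For the lower bound, I would show that if $p(n) = o(n^{-1/d(H)}(\log n)^{1/\nE})$ then w.h.p.\ some vertex of \gnp\ lies in no copy of $H$, precluding any $H$-factor. Fix $v$ and let $X_v$ count copies of $H$ through $v$; then $\E[X_v] = \Theta(n^{\nV-1}p^{\nE})$. Strict balancedness of $H$ guarantees that every proper subgraph has strictly smaller density, so when two copies of $H$ through $v$ share more than the vertex $v$ the overlap is a subgraph less dense than $H$; this makes the Janson / second-moment calculation run cleanly and gives $\Pr(X_v = 0) = \exp(-(1+o(1))\E[X_v])$. The expected number of uncovered vertices is therefore of order $n\exp(-\Theta(n^{\nV-1}p^{\nE}))$, which tends to infinity precisely when $n^{\nV-1}p^{\nE} \le (1-\varepsilon)\log n$, i.e.\ when $p = o(n^{-1/d(H)}(\log n)^{1/\nE})$. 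Chebyshev then promotes positive expectation to a w.h.p.\ statement.

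For the upper bound, assume $p \gg n^{-1/d(H)}(\log n)^{1/\nE}$. The key observation is that Theorem \ref{partial_factors} already yields a partial $H$-factor covering all but $\varepsilon n$ vertices once $p \gg n^{-1/m(H)} = n^{-1/d(H)}$; the real content of the theorem lies in showing that the extra $(\log n)^{1/\nE}$ factor suffices to close the remaining $\varepsilon n$ gap and yield a perfect factor. I would work with the auxiliary hypergraph $\mathcal{K}$ whose vertex set is $V(\gnp)$ and whose hyperedges are the copies of $H$ present in \gnp, and seek a perfect matching in $\mathcal{K}$. The plan has two stages: (a) use Janson's inequality together with strict balancedness to show that w.h.p.\ $\mathcal{K}$ admits a fractional perfect matching in which each copy of $H$ receives roughly its expected weight, so that each vertex of $V(\gnp)$ has total weight one; (b) convert this fractional matching to an integral one via the entropy-compression argument of Johansson, Kahn and Vu, which lower bounds the number of integral $H$-factors by $\exp(c n \log n)$ and in particular guarantees existence of at least one.

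The main obstacle is unquestionably stage (b), the fractional-to-integral step. This is the crux of the JKV proof and is delicate: it requires constructing a distribution on $H$-factors whose conditional-entropy chain rule can be controlled vertex by vertex, while simultaneously keeping a grip on Janson-type correlations between copies of $H$ in \gnp\ that share low-density subgraphs. I have no shortcut to propose here; my plan is to invoke the method of \cite{JKV} essentially verbatim, since any genuine simplification of this step would be a substantial independent result. The lower bound, the appeal to Theorem \ref{partial_factors}, and the setup of the auxiliary hypergraph $\mathcal{K}$ are the parts I would write out in full.
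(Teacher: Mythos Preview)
This theorem is not proved in the present paper; it is quoted from \cite{JKV} as a preliminary and used as a black box. There is therefore no in-paper proof to compare against directly, but Sections~\ref{Section:proofOfMultiGraphResult}--\ref{Section:RemainingResults} adapt the \cite{JKV} argument to the partitioned multigraph setting, and that adaptation exposes the actual structure of the proof, against which your outline can be measured. Your lower bound is the standard one and is fine.

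Your upper-bound outline, however, misdescribes the method. The \cite{JKV} argument does not build a fractional matching and then convert it to an integral one, nor is it ``entropy compression'' in the Moser--Tardos sense. It runs a random edge-deletion process: start from the complete blown-up graph, delete edges one at a time uniformly down to the target density, and track $\log \Phi(G_i)$ where $\Phi$ counts $H$-factors. The increment at step $i$ is $\log(1-\xi_i)$ with $\xi_i$ the fraction of surviving factors using $e_i$; one controls $\sum \xi_i$ by a martingale (Section~\ref{Martingale_Section} here), and the crux is the bound $\xi_i = o(\log^{-1} n)$. That bound needs property $\mathcal{B}$ (no copy of $H$ lies in abnormally many factors), and the Shearer-type entropy inequality (Lemma~\ref{entropy}) is what delivers $\mathcal{B}$ via the argument in Section~\ref{the_heart_of_the_matter}. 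The partial factor from Theorem~\ref{partial_factors} plays no role, and there is no fractional-to-integral conversion anywhere. So your stage~(a) does not feed into anything, and ``invoke \cite{JKV} verbatim'' at stage~(b), while fine as an intention, is attached to the wrong scaffolding: what \cite{JKV} actually does is the deletion martingale plus entropy bound on $\mathcal{B}$, not a rounding of a fractional object.
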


\begin{theorem}  \cite{JKV} \label{upperboundgeneral_factors}
For $H$, an arbitrary fixed graph, the threshold function, $\th_H(n)$ for \gnp\ to contain a $H$-factor satisfies
\[\th_H(n)=\Order(n^{-1/m(H)+o(1)}).
\]

\end{theorem}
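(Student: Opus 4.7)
My plan is to reduce to the strictly balanced case of Theorem~\ref{strictbal_factors}. Let $H'\subseteq H$ be a subgraph of minimum order among those achieving $d(H')=m(H)$; minimality forces $H'$ to be strictly balanced, since any proper subgraph of $H'$ would have strictly smaller density. Write $k=\nV-v_{H'}$; if $k=0$ then $H=H'$ is strictly balanced and Theorem~\ref{strictbal_factors} already gives the result, so I assume $k\ge 1$. I set $p = n^{-1/m(H)}(\log n)^C$ for a large constant $C$, so that $p = n^{-1/m(H)+o(1)}$, and split the exposure as $G(n,p)=G_1\cup G_2$ with each $G_i$ distributed as $G(n,p')$ for some $p'=\Theta(p)$.

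Randomly partition $V(G)$ into $A,B$ with $|A|=(v_{H'}/\nV)n$ and $|B|=(k/\nV)n$. Applying Theorem~\ref{strictbal_factors} to $G_1[A]\sim G(|A|,p')$, the strictly balanced threshold $|A|^{-1/m(H')}(\log|A|)^{1/e_{H'}}$ is dwarfed by the $(\log n)^C$ factor in $p$, so whp $G_1[A]$ contains an $H'$-factor $\mathcal{F}=\{C_1,\dots,C_s\}$ with $s=n/\nV$. It remains, using only edges of $G_2$, to extend each $C_i$ by $k$ vertices drawn from $B$ so that the chosen $k$-subsets are pairwise disjoint, jointly cover $B$, and each $C_i$ together with its chosen subset induces a copy of $H$.

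The key arithmetic is that $d(H)\le d(H')=m(H)$ rearranges to $\nE-e_{H'}\le m(H)(\nV-v_{H'})$. Consequently the expected number of extensions of any fixed $C_i$, of order $n^{k} p^{\nE-e_{H'}}$, is at least $(\log n)^{C'}$ for a constant $C'$ that grows with $C$. A standard second-moment calculation then shows that each $C_i$ admits many extensions whp, and that pairwise codegrees in the auxiliary extension hypergraph (numbers of joint extensions for pairs $C_i,C_j$) are concentrated at their expectation.

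The main obstacle is the final assembly step: simultaneously choosing one extension per $C_i$ so that the chosen $k$-subsets partition $B$. When $k=1$ this reduces to a bipartite perfect matching whose Hall condition follows immediately from the extension multiplicity. For $k\ge 2$, however, the problem is a near-perfect matching in a $(k+1)$-partite $(k+1)$-uniform hypergraph; my plan is to handle it in two stages, first a nibble-style semi-random argument (leveraging the codegree bounds just established) to cover all but $o(n)$ vertices of $B$, then an absorbing device prepared in advance on a small reserve of copies from $\mathcal{F}$ to swallow the residual uncovered vertices. Making both stages work with only an $n^{o(1)}$ slack in $p$ is the technical heart of the argument.
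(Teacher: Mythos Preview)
This theorem is not proved in the present paper; it is quoted from \cite{JKV}, whose argument (and, to the extent this paper reproves it via Theorem~\ref{MainTheoremPartionMultigraphResult}, this paper's argument too) runs the entropy/martingale factor-counting machinery directly on $H$, taking $p=n^{-1/m(H)+o(1)}$ so that $n^{v(H'')-1}p^{e(H'')}\to\infty$ for every subgraph $H''\subseteq H$ and the concentration estimates of Section~\ref{concentration} go through. No preliminary reduction to a strictly balanced subgraph is made.

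Your reduction is genuinely different, but the extension step for $k\ge 2$ is a real gap, not a cleanup. After fixing the $H'$-factor on $A$, what remains is a perfect matching in a sparse $(k{+}1)$-uniform hypergraph on $\{C_1,\dots,C_s\}\cup B$; this is Shamir's problem in disguise, the companion result that \cite{JKV} settles with exactly the machinery you are trying to bypass. A R\"odl nibble will cover all but $o(n)$ vertices, but the ``absorbing device prepared in advance'' is where the difficulty hides: to absorb an \emph{arbitrary} leftover $S\subseteq B$ you need reserved copies of $H'$ that can be re-extended so as to swallow exactly $S$, and constructing such flexible absorbers at density $n^{-1/m(H)+o(1)}$ is itself a problem of the same order as the one you started with. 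Separately, the codegree control you assert is not automatic: when $H-V(H')$ carries internal edges, a pair $b_1,b_2\in B$ with $b_1b_2\in E(G_2)$ can lie in far more common hyperedges than a non-adjacent pair, so the auxiliary hypergraph need not have the near-regularity the nibble requires. The plan relocates the hard step rather than removing it.
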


In \cite{JKV}, the authors define threshold functions $\th_H^{[1]}(n)$ and $th^{[2]}_H(n)$, for a given fixed graph $H$. Firstly, $\th_H^{[1]}(n)$ is defined as the threshold for every vertex in $G$ to be covered by at least one copy of $H$, while $\th_H^{[2]}(n)$ is the threshold for the property of satisfying the following two conditions:
\begin{enumerate}
\item{every vertex of $G$ is covered by at least one copy of $H$, and}
\item{for each $x\in V(H)$, there are at least $n/\nV$ vertices $x'$ of $G$ for which some isomorphism of $H$ takes $x$ to $x'$.}
\end{enumerate}
This threshold is clearly a lower bound for the threshold for finding a factor $\th_H(n)$ and the paper conjectures that they are, in fact, equal, and they prove this for strictly balanced $H$. In this paper, we will show this conjecture also holds for a large class of non-strictly balanced graphs.

The threshold $th^{[2]}_H(n)$ is completely determined and stated without proof in \cite{JKV}, for all graphs. For completeness we have included a proof below. Let $s_v = \min\{e(H'): H'\subseteq H, v\in V(H'), d(H')=m(v,H)\}$ and let $s$ be the maximum over all $s_v$. Clearly $m(v,H)\leq m(G)$ for all $v$, with equality for at least one $v$.
We are now ready to state and prove the following:

\begin{lemma}\label{2ConditionCoveringLemma}
If for all $v\in V(H)$, $m(v,H)=m(H)\ \mbox{ then }$
\[\th_H^{[2]}(n)=\Theta \left( n^{-1/m(H)}(\log(n))^{1/s}\right).\]
Otherwise
\[\th_H^{[2]}(n)=\Theta \left( n^{-1/m(H)} \right).\]
\end{lemma}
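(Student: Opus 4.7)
My plan is to prove matching upper and lower bounds in each of the two cases, using Janson's inequality applied to the count $X_{g,x}$ of copies of $H$ in \gnp\ with a fixed vertex $g \in V(G)$ placed at a fixed role $x \in V(H)$. The key fact is that the Janson overlap parameter $\Delta/\mu^2$ for $X_{g,x}$ is a finite sum over subgraphs $H' \subseteq H$ with $x \in V(H')$ of $1/\E[X_{H',g,x}]$, and is therefore dominated by the densest such $H'$ with the fewest edges, namely one achieving $e(H')=s_x$.

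\textbf{Upper bound.} Take $p = C\cdot(\text{claimed threshold})$ with $C$ large. For condition (1) I assign each $g \in V(G)$ to a favourable role: in the vertex-balanced case, a role $x$ realising $s_x = s$, at which $\E[X_{H',g,x}] = \Theta(C^s\log n)$ and Janson gives $\Pr(X_{g,x}=0) \leq n^{-\Omega(C^s)}$; in the non-vertex-balanced case, a role $x_0$ with $m(x_0,H)<m(H)$, at which every $\E[X_{H',g,x_0}]$ with $H' \subseteq H$ and $x_0 \in V(H')$ grows polynomially in $n$, so Janson yields $\Pr(X_{g,x_0}=0) \leq \exp(-n^{\Omega(1)})$. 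In either case the union bound over the $n$ vertices establishes condition (1). Once a copy of a densest-through-$x$ subgraph through $g$ at role $x$ is found, extending to a full copy of $H$ is routine by the standard extension argument, since every $H'' \subseteq H$ containing $V(H')$ has density at most $m(H)$. Condition (2) follows from the same Janson bound applied to each role: for each $x$, $\Pr(g\text{ plays role }x) \geq 1-1/(2\nV)$ when $C$ is large, and a second-moment argument on the sum of indicators gives at least $n/\nV$ vertices playing each role w.h.p.

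\textbf{Lower bound.} Take $p = c \cdot (\text{claimed threshold})$ with $c$ small. In the vertex-balanced case I show condition (1) fails. For each role $x$ let $A_x$ be the decreasing event ``no copy of the $s_x$-edge densest-through-$x$ subgraph $H'_x$ appears at $g$ in role $x$''. By the Harris--FKG inequality, $\Pr(A_x) \geq \exp(-\E[X_{H'_x,g,x}]) = n^{-c^{s_x}(1+o(1))}$, and FKG applied across the $\nV$ decreasing events $A_x$ yields
\[\Pr(g\text{ uncovered}) \geq \prod_{x \in V(H)} \Pr(A_x) = n^{-\sum_x c^{s_x}(1+o(1))}.\]
For $c$ small enough that $\sum_x c^{s_x}<1$, the expected number of uncovered vertices diverges, and a second-moment computation on this count (which reduces to the same overlap analysis) forces at least one uncovered vertex w.h.p. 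In the non-vertex-balanced case I show condition (2) fails: for any role $x^*$ with $m(x^*,H)=m(H)$, $\E[X_{H'_{x^*},g,x^*}] = c^{s_{x^*}}$ is a small constant, so $\Pr(g\text{ plays role }x^*) \leq 1-e^{-c^{s_{x^*}}}<1/\nV$ for $c$ small; concentration on the count of such $g$ then gives strictly fewer than $n/\nV$ vertices playing role $x^*$.

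The main obstacle is the vertex-balanced lower bound: justifying the sharp $n^{-c^{s_x}(1+o(1))}$ tail at each role and then correlating these across roles. Both rely on the Janson overlap parameter of $X_{H'_x,g,x}$ tending to $0$, which holds because every sub-subgraph of $H'_x$ containing $x$ has density at most $m(H)$ and the leading term $1/\E[X_{H'_x,g,x}] = 1/(c^{s_x}\log n)$ vanishes in the chosen regime.
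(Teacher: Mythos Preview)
Your approach is essentially correct and is genuinely different from the paper's. The paper treats this lemma as a bookkeeping exercise: for condition~(1) it simply cites the known vertex-covering threshold (Spencer; Theorem~3.22 in Janson--{\L}uczak--Ruci\'nski), and for condition~(2) it uses a multi-round edge-sprinkling together with Ruci\'nski's partial-factor theorem (the paper's Theorem~\ref{partial_factors}) to guarantee that each role covers $>n/\nV$ vertices; the non-vertex-balanced lower bound is reduced to a first-moment contradiction via the Ruci\'nski--Vince rooted-containment threshold. You instead build the whole thing from Janson and Harris/FKG directly, in effect re-deriving the covering threshold inside the proof. Your route is more self-contained and makes transparent \emph{why} the exponent $1/s$ appears (it is exactly the Janson bottleneck at a role with $s_x=s$), whereas the paper's route is shorter because it outsources all the probability to the literature.

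Two places to tighten. First, in your vertex-balanced lower bound, the claimed identity $\Pr(A_x)\geq n^{-c^{s_x}(1+o(1))}$ is not quite right when $s_x<s$: there $\E[X_{H'_x,g,x}]=\Theta\bigl(c^{s_x}(\log n)^{s_x/s}\bigr)=o(\log n)$, so $\Pr(A_x)=n^{-o(1)}$ and these roles contribute nothing to the product; the sum in the exponent should really be $\Theta(k_s c^{s})$ where $k_s=|\{x:s_x=s\}|$, and the subsequent second-moment step is precisely the nontrivial part of the Spencer/JLR covering result you are implicitly reproving. Second, for the upper bound on condition~(2) you should take $p=\omega(\cdot)$ (i.e.\ $C\to\infty$) rather than $C$ a fixed large constant: then at every role the Janson exponent tends to infinity and a simple Markov bound on the number of vertices \emph{not} playing that role already gives the $n/\nV$ count, so the second-moment argument you mention is unnecessary. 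With these adjustments your argument goes through.
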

\begin{proof}
We clearly have 4 cases to consider. Firstly we will look at vertex-balanced graphs, i.e. those that satisfy, $m(v,H)=m(H)$ for all $v\in V(H)$. Condition 1 of $\th_H^{[2]}(n)$, namely that each vertex of $G$ is covered by at least one copy of $H$, is well studied and exact thresholds can be found as Theorem 3.22 in \cite{0471175412_Janson_Luczak_Rucinski_RandomGraphs} and follow from results proved by Spencer in \cite{DBLP:journals/jct/Spencer90a:Extension_Thresholds}, and, in this case they are equal to our required bound.

Now we simply have to prove that condition 2 is also satisfied for  $p=\omega(n^{-1/m(H)}(\log(n))^{1/s})$. We note that we clearly have that $Cp=\omega(n^{-1/m(H)})$ for any constant $C$. 

Let $V(H)=\{1,2,\dots, \nV\}$, our result will follow from partitioning the edge set of \gnp\ into the union of random graphs $G_0,G_1,\dots,G_{\nV}$ with edge probability $p'$, where $1-p=\prod_{i=0}^{\nV} (1-p')=(1-p')^{\nV+1}$ and repeatedly applying Theorem \ref{partial_factors} to find partial factors of $H$. We first apply it in $G_0$, which has edge probability $p'>p/(\nV+1)=\omega\left(n^{-1/m(H)}\right)$, which is sufficient to apply \ref{partial_factors} with $\varepsilon=1/4$. This gives us a partial $H$-factor covering $3n/4$ of the vertices of \gnp\ and hence $(1-\varepsilon)n/\nV=3n/(4\nV)$ vertices of \gnp\ are covered by each vertex of $H$ with high probability. 

For each $i=\{1,2,\dots, \nV\}$ we consider vertex $i$ of $H$ and the vertices of $G_0$, that we have already covered by copies of $i$, and then, the random graph induced by the edges of $G_i$ on the vertex set of $G_0$, without those already covered vertices. This leaves us with a set of $n'=(1-3/(4\nV))n$ vertices in each $G_i$, that have not already been covered by a copy of the vertex $i$ of $H$, with an independent random edge set. We can consider this as equivalent to the random graph $G(n',p')$, where $p'>p/(\nV+1)=\omega\left((n')^{-1/m(H)}\right)$ (assuming $|\nV |>2$). This allows us to again, apply Theorem \ref{partial_factors} to find another set of partial factors on $3/4$'s of the remaining vertices, giving us in total $(6/4\nV -9/16\nV^2)n>n/\nV$, for $\nV>1$, vertices covered by vertex $i$ of $H$ as required. 

We now consider graphs that are non-vertex-balanced and so do not satisfy $m(v,H)=m(H)$ for all $v\in V(H)$. As before, the threshold for covering is known, and is in fact lower than our required threshold here. 

The same argument for proving condition 2 as above applies since we only required $p=\omega(n^{-1/m(H)})$, so it follows that, for these graphs, both conditions are satisfied for $p=\omega(n^{-1/m(H)})$. It only remains to show that the threshold is not lower than this for such $H$. This follows from another result, proved by \Rucinski and Vince \cite{DBLP:journals/combinatorica/RucinskiV88}. They prove, that for any vertex of \gnp, the threshold for it being covered by a particular vertex $v$ of $H$ is $n^{-1/m(v,H)}$. 

With the result above in mind, we define the following; for a vertex  $v_G\in V(\gnp)$, we let $X_{v_G}$ be the indicator variable for $v_G$ being covered by a copy of $v$, where $v\in V(H)$ satisfies $m(v,H)=m(H)$, namely $X_{v_G}=0$ if it is not covered, and $X_{v_G}=1$ if it is. Suppose that condition 2 is satisfied with high probability. Therefore we have that
\[\mathbb{E}(\sum_{v_G\in \gnp} X_{v_G})=\sum_{v_G\in \gnp}\mathbb{E}( X_{v_G})>n/\nV.\]

Suppose that $p=o(n^{-1/m(v,H)})$.
% (i.e. $\th_H^{[2]}(n)<<n^{-1/m(H)}$). 
We know that for $p$ in this range, $X_{v_G}=0$, with high probability, and therefore $\mathbb{E}( X_{v_G})=o(1)$. Since there are only $n$ such choices of $v_G$, we have a contradiction. Therefore the threshold $\th_H^{[2]}(n)$ is not $o(n^{-1/m(H)})$, and so must be $n^{-1/m(H)}$, as required. 
\end{proof}

In \cite{JKV}, it is conjectured that $\th_{H}^{[2]}(n)=\th_H(n)$, so in light of the above, this can be restated as the following

\begin{conjecture}\label{MainConjecture}\cite{JKV}
% (Johansson, Kahn, Vu, 2008)
If for all $v\in V(H)$, $m(v,H)=m(H), \mbox{ then }$
\[\th_H(n)=n^{-1/m(H)}(\log(n))^{1/s}.\]
Otherwise
\[\th_H(n)=n^{-1/m(H)}.\]
\end{conjecture}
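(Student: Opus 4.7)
The plan is to attack the two halves of the conjecture separately. In the first regime, where $H$ is vertex-balanced, the strictly balanced case is settled by Theorem~\ref{strictbal_factors}, but the proper subcase of \emph{vertex-balanced yet not strictly balanced} graphs would require extending the JKV machinery in a substantive way; I would regard that part as genuinely open and make no attempt on it here. The substantive target is therefore the second case: show that if $H$ is non-vertex-balanced then $\th_H(n)=\Theta(n^{-1/m(H)})$. The lower bound is essentially free, since an $H$-factor forces condition (2) of $\th_H^{[2]}$, and Lemma~\ref{2ConditionCoveringLemma} already pins $\th_H^{[2]}(n)$ at $\Theta(n^{-1/m(H)})$ in the non-vertex-balanced regime.

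For the upper bound I would follow the contraction strategy announced after Theorem~\ref{MainTheoremIntro}. Split the edges of \gnp\ into a constant number of independent copies with parameter $p'$ satisfying $p'=\omega(n^{-1/m(H)})$, using the standard identity $1-p=\prod(1-p')$. Choose a densest subgraph $H^\star\subseteq H$ realising $d(H^\star)=m(H)$ and apply Theorem~\ref{partial_factors} in the first copy, with a sufficiently small $\varepsilon$, to obtain a partial $H^\star$-factor covering all but $\varepsilon n$ vertices. Now contract each embedded copy of $H^\star$ to a single vertex; the residual object that must be completed is an $\supGr$-factor of a second independent copy of \gnp, where \supGr\ is the (multi)graph obtained from $H$ by collapsing $H^\star$ and retaining the edges from $V(H)\setminus V(H^\star)$ to $V(H^\star)$ as multi-edges at the contracted vertex. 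By construction, $m(\supGr)<m(H)$, and the contraction assigns the already-covered vertices of \gnp\ to a distinguished part of a partition, so the completion task is exactly of the form addressed by Theorem~\ref{MainTheoremPartionMultigraphResult}.

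Iterating—collapsing a densest subgraph of \supGr, then of its successor, and so on—eventually produces a strictly balanced multigraph. Each intermediate density lies strictly below $m(H)$, so even the sharp logarithmic factor $(\log n)^{1/e(\supGr)}$ supplied by the strictly balanced case of Theorem~\ref{MainTheoremPartionMultigraphResult} is swallowed by $\omega(n^{-1/m(H)})$; a finite number of independent edge splits therefore suffice to execute every contraction stage simultaneously and reassemble a full $H$-factor by unwinding.

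The main obstacle is Theorem~\ref{MainTheoremPartionMultigraphResult} itself, which underpins every contraction step. Porting the JKV entropy and second-moment arguments into the partitioned multigraph setting is delicate: the base measure must respect the part-structure, the counting of labelled copies must track edge multiplicities, and the vertex-transitivity JKV exploit is no longer available. A secondary, more combinatorial, obstacle is to ensure the contraction genuinely reduces the density. One must choose $H^\star$ as an inclusion-maximal densest subgraph, and then use the non-vertex-balancedness of $H$ to exhibit a vertex outside $H^\star$ whose local density is strictly less than $m(H)$; this is precisely what guarantees $m(\supGr)<m(H)$ and thus that the iteration terminates with room to spare in the probability budget.
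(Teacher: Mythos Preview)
Your high-level plan---treat the vertex-balanced, non-strictly-balanced subcase as open, take the lower bound from Lemma~\ref{2ConditionCoveringLemma}, and obtain the upper bound by contracting dense pieces and then invoking Theorem~\ref{MainTheoremPartionMultigraphResult}---matches the paper's approach. The substantive difference, and the place where your proposal has a real gap, is in \emph{how} you carry out the contraction.

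You propose to embed a single inclusion-maximal densest subgraph $H^\star$ via Theorem~\ref{partial_factors}, contract, and then iterate. Two things go wrong. First, your claim that ``by construction $m(\supGr)<m(H)$'' after one collapse is false in general: if $H$ contains two densest subgraphs that are not both inside your chosen $H^\star$ (for instance, two triangles sharing a vertex together with a pendant leaf, so that $H$ is non-vertex-balanced), then collapsing one still leaves a subgraph of density $m(H)$ in $\supGr$. The existence of a vertex with $m(v,H)<m(H)$ tells you only that some vertex survives all collapses, not that a single collapse already drops the maximum density. Second, and more seriously, once you have contracted you are in the partitioned multigraph model, and Theorem~\ref{partial_factors} is stated only for $G(n,p)$; you have no tool to embed the \emph{next} dense subgraph when it meets the already-contracted part. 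So the iteration, as written, stalls after the first step whenever the densest subgraphs overlap.

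The paper sidesteps both issues by doing all of the collapsing \emph{combinatorially first}: it iterates the contraction on $H$ itself until no subgraph of density $m(H)$ remains, producing a single multigraph $\supGr$ with $m(\supGr)<m(H)$ (this uses non-vertex-balancedness only to guarantee the process does not collapse everything to a point). It then defines $H'$ on $V(H)$ with edge set $E(H)\setminus E(\supGr)$, observes that $H'$ has isolated vertices (precisely the $v$ with $m(v,H)<m(H)$), and applies Theorem~\ref{mindegree_factors} (Alon--Yuster), not Theorem~\ref{partial_factors}, to obtain a \emph{full} $H'$-factor in one shot inside $G(n,p)$. That factor automatically supplies the exact equipartition into $\nVH$ classes, after which a single application of Theorem~\ref{MainTheoremPartionMultigraphResult} finishes the job. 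Your identification of Theorem~\ref{MainTheoremPartionMultigraphResult} as the main technical burden is correct; what needs repair is the reduction to it.
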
 

\section{Theorem \ref{MainTheoremIntro}}\label{sectionOutline}
The first case of Conjecture~\ref{MainConjecture} has been proved for strictly balanced $H$, and now we will prove the second statement in its entirety, namely we prove that the threshold for containing an $H$-factor is $\th_H(n)=\th_H^{[2]}(n)=n^{-1/m(H)}$ for graphs where $m(v,H)<m(H)$ for some $v\in V(H)$. We begin by demonstrating that the result follows, assuming Theorem \ref{MainTheoremPartionMultigraphResult}, and then in Section \ref{Section:proofOfMultiGraphResult}, we return to prove it.

In general terms, the main idea of this paper, is to `collapse' dense sub-graphs of $H$ to get a new graph (or possibly multigraph) \supGr, which we will formally define later. 

\newcommand{\CThickness}{0.75mm}
\newcommand{\CCirclePt}{4pt}
\newcommand{\CScale}{1}
%\draw[line width=\BThickness]

\begin{center}
\pgfdeclarelayer{background}
\pgfsetlayers{background,main}
\begin{tikzpicture}[scale=\CScale]
	%Creates the top vertex at {x,y} called node {} with label {}
	%\path (5,-2) node (X1) {};
	%\fill (X1.east) circle (1pt);

	%draw nodes from X20 to X41

		\foreach \j in {2,7}
		{
			\path (\j,3) node (X\j3) {};
			\fill (X\j3.east) circle (\CCirclePt);
		}

		\foreach \j in {1,3,5,7}
		{
			\path (\j,2) node (X\j2) {};
			\fill (X\j2.east) circle (\CCirclePt);
		}

		\foreach \j in {4,6}
		{
			\path (\j,1) node (X\j1) {};
			\fill (X\j1.east) circle (\CCirclePt);
		}
		\begin{pgfonlayer}{background}
		%triangle 1
		\draw[line width=\CThickness][dotted] (X12.east)--(X32.east);
		\draw[line width=\CThickness][dotted] (X23.east)--(X32.east);
		\draw[line width=\CThickness][dotted] (X12.east)--(X23.east);
		%triangle 2
		\draw[line width=\CThickness][dotted] (X41.east)--(X61.east);
		\draw[line width=\CThickness][dotted] (X41.east)--(X52.east);
		\draw[line width=\CThickness][dotted] (X52.east)--(X61.east);
		\end{pgfonlayer}

		%Invisible nodes
		\path (8.3,2) node (hidden1) {};
		\path (9.7,2) node (hidden2) {};
		\path [draw, -latex', line width=4]  (hidden1) -- (hidden2);

		\draw[line width=\CThickness] (X12.east)--(X41.east);
		\draw[line width=\CThickness] (X23.east)--(X73.east);
		\draw[line width=\CThickness] (X61.east)--(X72.east);
		\draw[line width=\CThickness] (X73.east)--(X72.east);

		\path (11,2) node (Y12) {};
		\fill (Y12.east) circle (\CCirclePt);
		\path (11,3) node (Y13) {};
		\fill (Y13.east) circle (\CCirclePt);
		\path (12,2) node (Y22) {};
		\fill (Y22.east) circle (\CCirclePt);
		\path (12,3) node (Y23) {};
		\fill (Y23.east) circle (\CCirclePt);

		\draw[line width=\CThickness] (Y12.east)--(Y13.east);
		\draw[line width=\CThickness] (Y13.east)--(Y23.east);
		\draw[line width=\CThickness] (Y23.east)--(Y22.east);
		\draw[line width=\CThickness] (Y22.east)--(Y12.east);

\end{tikzpicture}
\end{center}

Since, we have $m(v,H)<m(H)$ for some $v$, we know that at least one vertex of $H$ does not belong to any dense subgraphs of $H$. As a result, we will only need to cover a linear fraction of the vertices of \gnp\ with these dense subgraphs, since our factor will contain at least $n/\nV$ vertices to be covered by copies of these less dense vertices.

Once we have embedded the dense sub-graphs, we then use Theorem \ref{MainTheoremPartionMultigraphResult}, treating these (collapsed in $H$) embedded graphs as single vertices and finding a new, equally or less dense, \supGr -factor on these collapsed vertices, along with the remaining uncovered vertices of \gnp. This will translate to the required factor in our original graph.

To do this for the graph above, we would simply require a generalisation that allows us to partition our vertices and choose which vertex of \supGr\ will `cover' the vertices of \gnp\ in our factor. However, in a more general case, after collapsing vertices in $H$ we may no longer be left with a graph, but a multigraph, hence the required level of generalisation to use this method.

\newcommand{\BThickness}{0.75mm}
\newcommand{\BCirclePt}{4pt}
\newcommand{\BScale}{1}
%\draw[line width=\BThickness]

\begin{center}

\pgfdeclarelayer{background}
\pgfsetlayers{background,main}
\begin{tikzpicture}[scale=\BScale]

	%Creates the top vertex at {x,y} called node {} with label {}
	%\path (5,-2) node (X1) {};
	%\fill (X1.east) circle (1pt);

		\path (1,2) node (K1) {};
		\path (1.5,1) node (K2) {};
		\path (2.5,1) node (K3) {};
		\path (3,2) node (K4) {};
		\path (2,2.7) node (K5) {};

		\fill (K1.east) circle (\BCirclePt);
		\fill (K2.east) circle (\BCirclePt);
		\fill (K3.east) circle (\BCirclePt);
		\fill (K4.east) circle (\BCirclePt);
		\fill (K5.east) circle (\BCirclePt);

		\path (4,1) node (A1) {};
		\fill (A1.east) circle (\BCirclePt);

		\path (5,1.5) node (T1) {};
		\fill (T1.east) circle (\BCirclePt);

		\path (4,2.5) node (T2) {};
		\fill (T2.east) circle (\BCirclePt);

		\path (6,2.5) node (T3) {};
		\fill (T3.east) circle (\BCirclePt);

		\foreach \j in {1,2,3,4,5}
		{
			\foreach \i in {1,2,3,4,5}
			\draw[line width=\BThickness] (K\j.east)--(K\i.east);
		}

		\foreach \j in {1,2,3}
		{
			\foreach \i in {1,2,3}
			\draw[line width=\BThickness] (T\j.east)--(T\i.east);
		}

		\draw[line width=\BThickness](K3.east)--(A1.east);
		\draw[line width=\BThickness](K4.east)--(A1.east);

		\draw[line width=\BThickness](A1.east)--(T1.east);

		\draw[line width=\BThickness](K5.east)--(T2.east);

		%\draw[bend right](A1.east)--(T3.east);

		\draw[line width=\BThickness] (A1) to[bend right=40] node {} (T3.east);

		%Invisible nodes
			\path (7,1.7) node (hidden1) {};
			\path (8.4,1.7) node (hidden2) {};
			\path [draw, -latex', line width=4]  (hidden1) -- (hidden2);

		\path (9,1) node (C1) {};
		\fill (C1.east) circle (\BCirclePt);

		\path (10,1) node (B1) {};
		\fill (B1.east) circle (\BCirclePt);

		\path (11,1.5) node (P1) {};
		\fill (P1.east) circle (\BCirclePt);

		\path (10,2.5) node (P2) {};
		\fill (P2.east) circle (\BCirclePt);

		\path (12,2.5) node (P3) {};
		\fill (P3.east) circle (\BCirclePt);
		\foreach \j in {1,2,3}
		{
			\foreach \i in {1,2,3}
			\draw[line width=\BThickness] (P\j.east)--(P\i.east);
		}

		\draw[line width=\BThickness](C1.east)--(B1.east);
		\draw[line width=\BThickness](C1.east)--(B1.east);

		\draw[line width=\BThickness](B1.east)--(P1.east);

		\draw[line width=\BThickness](C1.east)--(P2.east);

		%\draw[bend right](B1.east)--(P3.east);

		\draw[line width=\BThickness] (B1) to[bend right=40] node {} (P3.east);
		\draw[line width=\BThickness] (B1.east) to[bend right=40] node {} (C1.east);

\end{tikzpicture}

\label{fig:K5Colapsing}

\end{center}

In this example, the densest subgraph is clearly the $K_5$, so we would collapse this to a single vertex. However, one vertex of $H$ contains edges to two vertices of this subgraph, leaving us with a multigraph. It is also worth noting that the density of $K_5$ is $2.5$ and since every vertex has degree at least 3, this is an example of a non-balanced graph, that could not be solved by the minimum degree result \cite{CambridgeJournals:1771816_Alon_Noga_yuster_Raphael_Factors}, and where we can provide the optimal threshold, improved on that provided by Theorem \ref{upperboundgeneral_factors}.

To define our collapsing method formally, we begin with some observations on the effects of vertex collapsing on the density of $H$. We know that $m(v,H)=m(H)$ for some vertices $v$, and these are the vertices that we collapse. For each such $v$, we, in turn, choose a subgraph $H'$ such that $v\in H'$ and $d(H')=m(H)$. We now collapse all the vertices in $H'$ into a single vertex. Giving us a new (possibly multi) graph, which we will call $H_1$, which has the vertices of $H\backslash H'$, with an additional vertex $v_1$, and an edge for each edge of $H$ with an endpoint in $H\backslash H'$. We continue this process, going from $H_i$ to $H_{i+1}$, at each stage, collapsing a subgraph of density $m(H)$ until none remain. The final graph which contains no subgraphs of density $m(H)$, we will call \supGr. We prove the rigour of this statement in the following lemma.

\begin{lemma}
The collapsing process, described above, terminates after a finite number of steps, producing a unique multigraph \supGr, with $m(\supGr )<m(H)$.
\end{lemma}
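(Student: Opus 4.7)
The plan is to separately handle termination, the bound $m(\supGr) < m(H)$, and uniqueness. Each collapse replaces a subgraph $H' \subseteq H_i$ with $|V(H')| \geq 2$ by a single vertex, so $|V(H_{i+1})| < |V(H_i)|$ and the process halts after at most $|V(H)| - 1$ steps. To control densities I would show inductively that $m(H_{i+1}) \leq m(H_i)$: any subgraph $G \subseteq H_{i+1}$ containing the newly collapsed vertex $v$ lifts to $G^* \subseteq H_i$ with $|V(G^*)| = |V(G)| + |V(H')| - 1$ and $|E(G^*)| = |E(G)| + |E(H')|$, and $d(G^*) \leq m(H_i) = d(H')$ rearranges to $d(G) \leq d(H')$. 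Subgraphs of $H_{i+1}$ not containing $v$ are unchanged. Since the process only halts when no subgraph has density exactly $m(H)$ and all densities stay $\leq m(H)$, we conclude $m(\supGr) < m(H)$.

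\textbf{Submodularity and maximal densest subgraphs.} For uniqueness I would first establish the standard submodularity lemma: if $H', H'' \subseteq H$ both satisfy $d(H') = d(H'') = m(H)$ and $V(H') \cap V(H'') \neq \emptyset$, then $d(H' \cup H'') = m(H)$. This follows from the identities $|E(H' \cup H'')| + |E(H' \cap H'')| = |E(H')| + |E(H'')|$ and $|V(H' \cup H'')| + |V(H' \cap H'')| = |V(H')| + |V(H'')|$, combined with $d(H' \cup H'') \leq m(H)$ (the case $|V(H' \cap H'')| = 1$ being handled separately, where $|E(H' \cap H'')| = 0$). As a consequence, for every vertex $v$ lying in some densest subgraph of $H$ the union $U_v$ of all densest subgraphs through $v$ is itself a densest subgraph, and two such $U_v$'s are either equal or vertex-disjoint; these $U_v$'s are the maximal densest subgraphs of $H$.

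\textbf{The final partition equals the $U_v$-partition, and main obstacle.} The vertex set of each $H_i$ corresponds to a partition $\pi_i$ of $V(H)$. By induction on $i$, every part $P$ of $\pi_i$ with $|P| \geq 2$ satisfies $|E(H[P])| = m(H)(|P| - 1)$; the step uses that a merged part $P^{\mathrm{new}} = \bigcup_l P_{j_l}$ arising from collapsing $H'_i \subseteq H_i$ satisfies $|E(H[P^{\mathrm{new}}])| = \sum_l |E(H[P_{j_l}])| + |E(H'_i)|$ with $|E(H'_i)| = m(H)(|V(H'_i)| - 1)$. Consequently, the lift to $H$ of any collapsed $H'_i$ is itself a densest subgraph of $H$, which by submodularity lies entirely inside some $U_v$, so merging only occurs within a single $U_v$. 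Conversely, while some $U_v$ is split across $k \geq 2$ parts of $\pi_i$, the contraction of $U_v$ in $H_i$ has $k$ vertices and $m(H)(|U_v| - 1) - \sum_{P \subseteq U_v} m(H)(|P| - 1) = m(H)(k - 1)$ edges, hence density $m(H)$, so the process cannot yet terminate. The final partition is therefore forced to be the union of the $U_v$'s and the singletons on vertices lying in no densest subgraph, depending only on $H$. The main technical obstacle is precisely this uniqueness step: one must verify, via the edge-count identity and submodularity, that the intermediate multigraphs retain exactly the structure needed to confine each merging to a single $U_v$ while simultaneously forcing every $U_v$ to collapse completely, independent of the choices made along the way.
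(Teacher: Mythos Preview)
Your termination and density-bound arguments match the paper's almost exactly: both lift a subgraph of $H_{i+1}$ containing the collapsed vertex back to a subgraph of $H_i$ and use the mediant-type inequality to conclude that its density cannot exceed $d(H') = m(H)$. The paper phrases this via the equivalence $\frac{a-c}{b-d} \geq \frac{a}{b} \Leftrightarrow \frac{a}{b} \geq \frac{c}{d}$, which is precisely the rearrangement you perform.

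Your uniqueness argument, however, takes a genuinely different and more structural route. The paper argues uniqueness rather informally: it uses the mediant equivalence to observe that a vertex lying in a density-$m(H)$ subgraph of $H_{i+1}$ must already have done so in $H_i$, and then notes that if a vertex lies in two overlapping dense subgraphs, collapsing one leaves the remainder still at density $m(H)$, so it is collapsed ``at a later stage to the same point''. You instead prove the submodularity lemma (the union of two intersecting densest subgraphs is again densest), deduce that the maximal densest subgraphs $U_v$ partition the dense vertices, and then track the partition $\pi_i$ of $V(H)$ through the process, showing via an edge-count identity that every merge stays within a single $U_v$ and that the process cannot stop until each $U_v$ is fully collapsed. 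This buys you an explicit, choice-independent description of the terminal object (the quotient of $H$ by the $U_v$'s), which is cleaner and more rigorous than the paper's sketch; the paper's version is shorter but leaves the reader to fill in why arbitrary sequences of collapses must end at the same multigraph.
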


\begin{proof}
Firstly note that the density of $H_1$, defined in the same way for multigraphs as for graphs, is
\begin{equation}\label{densitySubGraph}
d(H_1)=\frac{e(H_1)}{(v(H_1)-1)}=\frac{e(H)-e(H')}{v(H)-v(H')+1-1}=\frac{e(H)-e(H')}{(v(H)-1)-(v(H')-1)}.
\end{equation}
Noting that $d(H)\leq m(H)=d(H')$, we can see that the above gives us, $d(H_1)\leq d(H)\leq m(H)$. If instead of $H$ and $H_1$, we consider any subgraph of $H$ containing the vertices we are going to collapse and the resulting subgraph of $H_1$, the same inequality shows that we have not created any subgraph in $H_1$ of density greater than $m(H)$. In fact, considering the following for positive numbers $a,b,c$ and $d$;

\[
\frac{a-c}{b-d} \geq \frac{a}{b} \Longleftrightarrow \frac{a}{b} \geq \frac{c}{d}
\]
(assuming $b>d$), and noting that we only have equality on one side if we have it on both, it follows from (\ref{densitySubGraph}) that any vertex that is in a subgraph of density $m(H)$ in $H_{i+1}$, must have also been in such a graph in $H_i$.  

Since we are considering $H$ such that, at least one vertex $v$, satisfies $m(v,H)<m(H)$, the above shows that the collapsing process will never produce a subgraph of density $m(H)$ containing these vertices and hence they will never be collapsed. This ensures that once all subgraphs have been collapsed, we will not be left with a single point, and that $m(\supGr)<m(H)$.

The above also demonstrates that while the choice of dense subgraph to collapse will result in different $H_i$, ultimately, this process will always terminate with the same final multigraph, which we call \supGr. To see why this follows, suppose a vertex lies in two different subgraphs, which we could choose to collapse. By (\ref{densitySubGraph}) applied to the subgraph induced by the union of the two dense subgraphs, the new subgraph, formed by the collapsing process, will still have density $m(H)$ and so the remaining vertices will be collapsed at a later stage to the same point.
\end{proof}

It is clear that if we can embed the collapsed, dense subgraphs of $H$, required for a factor, and then embed the edges of \supGr\, we will have our required factor. Firstly, we prove that we can embed these dense subgraphs as required. Consider the graph $H'$ with vertex set $V(H)$ and edge set $E(H)-E(\supGr)$, (i.e. $H'$ contains only those edges collapsed by the above process). Let $\th_{H'}(n)$ be the threshold for embedding a factor of $H'$ into \gnp.

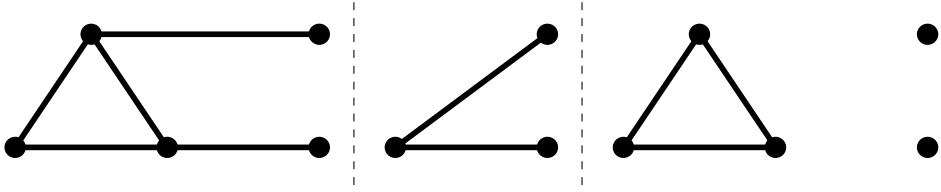
\begin{figure}[h]
\begin{center}
\pgfdeclarelayer{background}
\pgfsetlayers{background,main}
\begin{tikzpicture}[scale=\genericScale]

	%draw nodes from X20 to X41

		\foreach \j in {1,3,5}
		{
			\path (\j,1) node (X\j1) {};
			\fill (X\j1.east) circle (\genericCirclePt);
		}
		\foreach \j in {2,5}
		{
			\path (\j,2.5) node (X\j3) {};
			\fill (X\j3.east) circle (\genericCirclePt);
		}

			\draw[line width=\genericThickness ] (X11.east)--(X31.east);
			\draw[line width=\genericThickness] (X23.east)--(X31.east);
			\draw[line width=\genericThickness] (X11.east)--(X23.east);
			\draw[line width=\genericThickness] (X23.east)--(X53.east);
			\draw[line width=\genericThickness] (X51.east)--(X31.east);

			\draw [dashed] (5.6, 0.5) -- (5.6,3);
			\path (6,1) node (X61) {};
			\fill (X61.east) circle (\genericCirclePt);
			\path (8,1) node (X81) {};
			\fill (X81.east) circle (\genericCirclePt);
			\path (8,2.5) node (X82) {};
			\fill (X82.east) circle (\genericCirclePt);
			\draw[line width=\genericThickness] (X61.east)--(X81.east);
			\draw[line width=\genericThickness] (X61.east)--(X82.east);
			\draw [dashed] (8.6, 0.5) -- (8.6,3);

		\foreach \j in {9,11,13}
		{
			\path (\j,1) node (X\j1) {};
			\fill (X\j1.east) circle (\genericCirclePt);
		}
		\foreach \j in {10,13}
		{
			\path (\j,2.5) node (X\j3) {};
			\fill (X\j3.east) circle (\genericCirclePt);
		}

			\draw[line width=\genericThickness] (X91.east)--(X111.east);
			\draw[line width=\genericThickness] (X103.east)--(X111.east);
			\draw[line width=\genericThickness] (X91.east)--(X103.east);

			\typeout{end}

\end{tikzpicture}

\end{center}
\caption{An example of an $H$ and its respective \supGr\ and $H'$ graphs}
\end{figure}
\begin{lemma}\label{embedDenseSubgraphs}
For a non-vertex-balanced graph $H$, and the corresponding $H'$ as defined above, the threshold for the existence of an $H'$-factor satisfies;
 \[\th_{H'}(n)=\Theta\left(n^{-1/m(H)}\right).\]

\end{lemma}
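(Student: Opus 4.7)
The plan is to reduce immediately to the Alon--Yuster minimum-degree result (Theorem~\ref{mindegree_factors}). To apply it to $H'$, I need only to check that $\delta(H') < m(H')$ and to identify $m(H')$; in fact $m(H') = m(H)$, so the resulting threshold $n^{-1/m(H')} = n^{-1/m(H)}$ is exactly what the lemma claims.

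First I would verify that $\delta(H') = 0$. Since $H$ is non-vertex-balanced, there exists a vertex $v \in V(H)$ with $m(v,H) < m(H)$. The proof of the preceding lemma (the ``once a densest subgraph, always a densest subgraph'' contrapositive drawn from the inequality $\frac{a-c}{b-d} \ge \frac{a}{b} \Longleftrightarrow \frac{a}{b} \ge \frac{c}{d}$) shows that such a $v$ is never absorbed into any collapsed group, so no edge of $H$ incident to $v$ is internal to a collapsed group. Hence $v$ is isolated in $H'$ and $\delta(H') = 0$.

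Next I would verify that $m(H') = m(H)$. On one hand $H'$ is an edge-subgraph of $H$ on the same vertex set, so trivially $m(H') \le m(H)$. Conversely, choose any subgraph $H^* \subseteq H$ achieving $d(H^*) = m(H)$; the preceding lemma shows $V(H^*)$ is contained in a single collapsed group, so every edge of $H^*$ lies in $E(H) \setminus E(\supGr) = E(H')$. Therefore $H^* \subseteq H'$ and $m(H') \ge m(H)$.

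With $\delta(H') = 0 < m(H) = m(H')$ established, Theorem~\ref{mindegree_factors} applies directly to $H'$ and yields $\th_{H'}(n) = \Theta(n^{-1/m(H')}) = \Theta(n^{-1/m(H)})$. I do not foresee any real obstacle: the content of the lemma is a bookkeeping reduction that isolates, within the collapsed picture, precisely the hypotheses of Alon--Yuster, and the non-vertex-balanced assumption is exactly what guarantees the required isolated vertex in $H'$.
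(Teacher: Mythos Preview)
Your proposal is correct and follows essentially the same route as the paper: identify $m(H')=m(H)$, observe that the non-vertex-balanced hypothesis forces an isolated vertex in $H'$ so $\delta(H')=0<m(H')$, and then invoke Theorem~\ref{mindegree_factors}. The only cosmetic difference is that the paper obtains $m(H')\ge m(H)$ by pointing to one of the subgraphs actually collapsed during the process (which by construction has density $m(H)$ and lies entirely in $H'$), whereas you route it through the uniqueness part of the preceding lemma; both justifications are valid.
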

\begin{proof}
$H'$ is a subgraph of $H$, and so $m(H')\leq m(H)$, and since $H'$ contains copies of the collapsed subgraphs of density $m(H)$, it must itself satisfy $m(H')=m(H)$. The edges of $H'$ are exactly those that were collapsed in the process that generated \supGr, and hence, any vertices of $H$ that were not collapsed, will be isolated in $H'$. In other words, these vertices will have degree $0$ in $H'$. Such vertices must exist, since we assumed that $m(v,H)<m(H)$ for some vertex $v$. Since we then have $\delta(H')<m(H')$, we can apply Theorem \ref{mindegree_factors}, completing the proof as required.

\end{proof}

We now have an $H$-factor, without the edges from each copy of $H$, that are also present in \supGr. To embed these final edges, we now use our generalisation of \ref{upperboundgeneral_factors}, namely a specific application of Theorem \ref{MainTheoremPartionMultigraphResult} to find a factor of \supGr, with the collapsed subgraphs covered by the vertices we require from $H$.

As in \cite{JKV}, we work in (a multigraph generalised form of) $G(n,M)$, the graph chosen uniformly from all $M$-edge graphs on $V$ (although we will use a multigraph generalised form of \gnp\ to prove our results) and derive a generalised form of \ref{strictbal_factors}. Since we will be operating with multigraphs and partitioned vertex sets, we need to define some notation.

Let $G$ be a graph on $n$ vertices and let $H$ be a fixed graph with vertex set $\{x_1,x_2,x_3,\dots ,x_{\nV}\}$. Let \supGr\ be the multigraph obtained by repeated applications of vertex collapsing of subgraphs of $H$ of density $m(H)$, until no such subgraphs remain. Let $\nVH=|V(\supGr)|$ and $\nEH=|E(\supGr)|$. If \supGr\ has vertex set $\{y_1, y_2, \dots ,y_{\nVH}\}$, we use lemma \ref{embedDenseSubgraphs} to find partial factors consisting of $n/\nV$ copies of these collapsed subgraphs (or single vertices, for those that were not collapsed) for each vertex in \supGr. 

We use the standard method of partitioning the edges of \gnp\ into $G(n,p')\cup G(n,p')$ where there is an edge in \gnp\ if and only if there is an edge in at least one of the $G(n,p')$. Since we are only interested in threshold functions, which are equivalent up to constant factors, and $1-p=(1-p')^2$, which implies that $ p'>p/2$, we can apply Lemma \ref{embedDenseSubgraphs} without sacrificing randomness of the edges between these embedded subgraphs. This leaves us with \nVH\ separate classes of the vertices of \gnp, each containing graphs of density $m(H)$ or isolated vertices, each corresponding to a vertex of \supGr.

We now wish to find an \supGr-factor between these partitioned sets, but we are only interested in factors that connect the `correct' vertices together from each partition, and hence are not interested in the edges within each partition, or those that are not the prescribed edges between the  subgraphs we have already embedded.
%where $n_{\nVH}=\nVH n/v$,

We can consider a random multigraph, which we call \supGNP, using the edges of our second $G(n,p')$, such that if the required \supGr -factor exists here, it will translate into the required $H$-factor in \gnp. Firstly, the vertex set of \supGNP\ consists of a single vertex for each of the isolated vertices and subgraphs of density $m(H)$, that we have embedded into \gnp. We maintain the partition of these new vertices into equal sets of size $n/\nV$, according to the vertex of \supGr, they correspond to in the initial embedding. Note that this means that \supGNP\ does not have $n$ vertices, but rather $\nVH n/\nV$, which is however, a constant multiple of $n$. 

For the edge set of \supGNP, we use the second edge partition $G(n,p')$, to ensure randomness. We consider a pair of vertices, $v_1$ and $v_2$ in \supGNP, noting that we can also consider $v_1$ and $v_2$ as sets of vertices of \gnp, and the mapping $\sigma: v(H)\rightarrow \gnp$ that describes the already embedded subgraphs that form the vertices of \supGNP. For each $x_1$ and $x_2$ $\in$ $v(H)$, with $(x_1,x_2)\in e(H)$ and $\sigma(x_1)\in v_1$ and $\sigma(x_2)\in v_2$, if $(\sigma(x_1),\sigma(x_2))\in e(\gnp)$, then we have an edge between $v_1$ and $v_2$ in \supGNP, noting that we consider each such edge separately. In this way, any factor of \supGr\ found in \supGNP\ will automatically translate into a factor of $H$ in \gnp.

\supGNP\ can also be thought of as a random $\nVH$-partite multigraph with $\nVH n/\nV$ vertices, and edges between vertices $x$ and $y$ with probability $p$ for each edge between their origin vertices in $H$, and $0$ otherwise. Essentially, a series of $\nEH=e(\supGr)$ bipartite graphs, in the same `shape' as \supGr. We will work using this random graph model (and the corresponding \supGNP\ model), to prove our results. 

It may be helpful for some readers to visualise this as a $\nVH$-partite random graph with different edge probabilities for some of the edge partitions, rather than a multigraph. For example, a single edge with probability $p^2$, rather than two edges, each with probability $p$ between vertices. The varying probabilities make this model cumbersome to work with, however, and the multigraph notation is more convenient for use. 

For some choices of $H$ it may simply be possible to set all edge probabilities to the minimum of these values, and still find the factor, but our earlier graph, containing a $K_5$, is an example of a graph for which this method would fail.

At this point, we have the exact set-up for Theorem \ref{MainTheoremPartionMultigraphResult}. As we have shown that $m(\supGr)<m(H)$, it implies that for $p=\omega(n^{-1/m(H)})>\Order(n^{-1/m(\supGr)+o(1)})$ (or, if $\supGr$ is strictly balanced $p>\omega (n^{-1/m(\supGr)}(\log n)^{1/|E(\supGr)|})$) such an \supGr -factor, a.a.s exists, and hence our $H$ factor exists in our original \gnp, as required.

\section{Theorem \ref{MainTheoremPartionMultigraphResult}}\label{Section:proofOfMultiGraphResult}

The proof of Theorem \ref{MainTheoremPartionMultigraphResult} largely follows the same steps as the original graph result in
 \cite{JKV}. To illustrate the key ideas, we outline the initial setup and then draw out several of the key ideas of the proof. We begin the proof of Theorem \ref{MainTheoremPartionMultigraphResult}, with a multigraph generalised version of their Theorem 3.1. This Theorem essentially shows that the number of factors in \supGNP\ is close to expectation, by demonstrating that the equivalent process of removing edges from the complete graph, does not remove too many factors at each step.
\begin{theorem}\label{Number of factors}
Let $\nVH=v(\supGr)$ and $\nEH=e(\supGr)$. For $p=p(n)=\omega(n^{-1/m(H)})$ and $M=M(n)=\nEH(n/\nV)^2p$, and let $\Phi(G)$ be the number of the \supFact s in a graph $G$, then
\[\Pr(\Phi(\supGNP)\geq (n^{\nVH-1}p^{\nEH})^{n/\nV} e^{-\Order(n)})\geq 1-n^{-\omega(1)}.\]

\end{theorem}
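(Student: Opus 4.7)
The plan is to follow the strategy of Theorem~3.1 in~\cite{JKV}, generalized to the partitioned multigraph setting.

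First I would compute the expected number of restricted factors directly. A potential restricted $\supGr$-factor of the complete blowup $\BB(\supGr, n/\nV)$ is determined by $\nVH-1$ bijections between $V_1$ and each of the remaining parts $V_j$, so there are $((n/\nV)!)^{\nVH-1}$ potential factors in total. Each uses exactly $(n/\nV)\nEH$ edge-slots (counting multiplicities), and every slot is retained in $\supGNP$ independently with probability $p$. Hence
\[
\E[\Phi(\supGNP)] = ((n/\nV)!)^{\nVH-1}\, p^{(n/\nV)\nEH} = (n^{\nVH-1} p^{\nEH})^{n/\nV}\, e^{-\Theta(n)}
\]
by Stirling, already matching the target order.

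Next I would pass to the edge-uniform model. Let $E_0 = \nEH (n/\nV)^2$ be the total number of edge-slots of $\BB(\supGr, n/\nV)$, and let $G_M$ be a uniformly random $M$-subset of these slots; for $M = E_0 p$ the standard coupling with $\supGNP$ reduces the problem to $G_M$. Generate $G_M$ via a uniformly random ordering $e_1,\ldots, e_{E_0}$ of the edge-slots, set $G_t$ to be $\BB(\supGr, n/\nV)$ with $e_1,\ldots, e_t$ removed, and write $\Phi_t = \Phi(G_t)$. Because each restricted $\supGr$-factor uses exactly $(n/\nV)\nEH$ edge-slots,
\[
\E[\Phi_{t+1} \mid G_t] = \Phi_t \Bigl(1 - \frac{(n/\nV)\nEH}{E_0 - t}\Bigr),
\]
so the deterministic path $\log \E \Phi_t$ decreases by $\Theta(n)$ in total as $t$ ranges over $0,\ldots, E_0 - M$, finishing at $\log(n^{\nVH-1}p^{\nEH})^{n/\nV} - \Order(n)$.

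The heart of the argument is then to show $\log \Phi_{E_0 - M}$ concentrates around $\log \E \Phi_{E_0 - M}$ up to an additive $\Order(n)$ with probability $1 - n^{-\omega(1)}$. I would adapt the entropy/supermartingale approach of~\cite{JKV}: an anomalously large drop in $\log \Phi_t$ at step $t+1$ would force the removed slot $e_{t+1}$ to lie in disproportionately many $\supGr$-factors of $G_t$, and this is precluded by a sharp upper-tail concentration for the number of $\supGr$-copies through a given edge-slot. For strictly balanced $\supGr$ this bound follows from Janson-type arguments adapted to the multigraph blowup; for non-strictly-balanced $\supGr$ it is easier because $p = \omega(n^{-1/m(H)})$ exceeds the $\supGr$-threshold by polynomial room. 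A union bound over the $E_0 = \Order(n^2)$ edge-slots and $E_0$ process steps yields the claimed probability.

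I expect the main obstacle to be the per-edge-slot concentration in the multigraph setting, where parallel slots must be treated explicitly. When $e_{t+1}$ corresponds to an edge $y_iy_j$ of $\supGr$ of multiplicity $k$, the count of $\supGr$-factors through $e_{t+1}$ is coupled to the status of the $k-1$ other parallel slots between the same pair of blowup vertices, and the second-moment bookkeeping must handle this correlation correctly. Once the per-slot counts are shown to be concentrated uniformly over all $t$, the supermartingale argument of~\cite{JKV} transfers to the partitioned multigraph $\supGNP$ without further structural change.
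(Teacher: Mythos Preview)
Your high-level framework matches the paper's: random edge deletion from the complete blowup, the telescoping identity $\log|\mathcal{F}_t| = \log|\mathcal{F}_0| + \sum \log(1-\xi_i)$, and a martingale on $\xi_i - \gamma_i$. The computation of $\E[\Phi]$ and of the deterministic drift $\sum\gamma_i$ is correct.

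The gap is in your mechanism for bounding $\xi_i$. You say an anomalous drop forces $e_{t+1}$ to lie in disproportionately many $\supGr$-\emph{factors} of $G_t$, and that this is precluded by concentration on the number of $\supGr$-\emph{copies} through $e_{t+1}$. These are different quantities: bounding how many copies of $\supGr$ contain an edge does not, by itself, bound how many factors use that edge, since a single copy $K$ could in principle participate in almost all factors. What one actually needs is
\[
\xi_i \;\le\; \bigl(\text{number of copies } K \ni e_i\bigr)\;\times\; \max_K \frac{\weight(K)}{\Phi(G_{i-1})},
\]
and the second factor is the hard part. In the paper this is property $\mathcal{B}$: $\mbox{maxr }\weight_G(\supGr(G))=\Order(1)$, i.e.\ no copy of $\supGr$ lies in much more than the average number of factors. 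Establishing $\mathcal{B}$ (via an auxiliary property $\mathcal{C}$, Shearer's lemma, and the entropy Lemmas~\ref{entropy} and~\ref{2ndEntropyLemma}) is precisely the ``heart of the matter''; it is a statement about how factors are spread among copies, not a subgraph-count statement, and it cannot be extracted from Janson-type or Kim--Vu-type concentration on copy counts. The obstacle you single out---parallel-slot bookkeeping in the copy-count concentration---is real but minor (it only affects property $\mathcal{R}$); the substantive work your outline omits is proving $\mathcal{B}$ inductively along the process, conditional on $\mathcal{A}$ and $\mathcal{R}$.
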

\begin{proof}
The proof follows in the same steps as the original. Let $T=\nEH(n/\nV)^2-M$, and let $e_1,e_2,\dots,e_{\nEH(n/\nV)^2}$ be a random, uniform ordering of the complete form of our multigraph (i.e \supGNP\ with $p=1$), which we shall call $KM_{n}$. Set $G_i$ to be $KM_{n}-\{e_1,e_2,\dots,e_i\}$, so if we let $\mathcal{F}(G)$ be the set of \supGr\ factors on $G$, we define $\mathcal{F}_i:=\mathcal{F}(G_i)$. We then let $\xi_i$ be the fraction of members of $\mathcal{F}_i$ containing $e_i$. Then, as for the standard graph case, we have 
\[|\mathcal{F}_t|=|\mathcal{F}_0|\frac{|\mathcal{F}_1|}{|\mathcal{F}_0|}\dots\frac{|\mathcal{F}_t|}{|\mathcal{F}_{t-1}|}=|\mathcal{F}_0|(1-\xi_1)\dots(1-\xi_t),\]
and that therefore, we have 
\begin{equation}\label{logF_Tbound}\log |\mathcal{F}_t|=\log |\mathcal{F}_0| +\sum_{i=1}^{t} \log(1-\xi_i).\end{equation}
Here our sums start to differ somewhat from the standard graph case; we have
%CHECK THIS
\[\log |\mathcal{F}_0| =\log ((n/\nV)!)^{\nVH-1}=\frac{\nVH-1}{\nV}n\log n - \Order(n).\]
Also, we have
\[\E\ \xi_i = \frac{\nEH n/\nV}{\nEH (n/\nV)^2-i+1}=:\gamma_i=\E[\xi_i|e_1,\dots,e_{i-1}]\]
for any choice of $e_1,\dots,e_{i-1}$. 

Therefore we have
\[\sum_{i=1}^t \E\xi_i=\sum_{i=1}^t \gamma_i=\frac{\nEH n}{\nV}\log \frac{\nEH (n/\nV)^2}{\nEH (n/\nV)^2-t}+o(1)\]
provided that $\nEH (n/\nV)^2-t>\omega(n)$. We use the same property as for the standard graph case, namely $\mathcal{A}_t$, which is the event that
\[\left\{ \log|\mathcal{F}_t| >\log|\mathcal{F}_0| - \sum_{i=1}^t \gamma_i -\Order(n)\right\}.\]
As before, we aim to show that with high probability $\mathcal{A}_t$ does not fail, i.e.
\begin{equation}\label{event_failure}\mbox{for } t\leq T, \Pr(\overline{\mathcal{A}_t})=n^{-\omega(1)}.\end{equation}
This implies our theorem, since we then, setting $t=T$, have
\[\log \Phi(\supGNP) = \log |\mathcal{F}_t| >\frac{\nVH-1}{\nV}n\log n + \frac{\nEH n}{\nV}\log p -\Order(n) \]
(since $M=\nEH (n/\nV)^2-T=\nEH (n/\nV)^2p$). To prove (\ref{event_failure}), we use the same methods as \cite{JKV}, namely an Azuma's inequality, martingale argument. As before, we will define two auxiliary properties $\mathcal{B}_i$ and $\mathcal{R}_i$ for $i\leq 1 \leq T-1$, that will allows us to establish control over the concentration of our variables. We set our martingale to have a difference sequence of
\[Z_i=\left\{\begin{array}{ll} \xi_i-\gamma_i \mbox{ if } \mathcal{B}_i \mbox{ and }\mathcal{R}_i \mbox{ hold for all } j<i \\
0 \mbox{ otherwise}.
 \end{array}\right.\]
 
And so our martingale is $X_t=\sum_{i=1}^{t} Z_i$. We leave the formal definitions of $\mathcal{B}_i$ and $\mathcal{R}_i$ for Section \ref{Section:PropertiesRAndB}, but in general terms, $\mathcal{R}_i$ states that each vertex is in close to expectation, number of copies of \supGr, along with a second technical condition, while $\mathcal{B}_i$ states that the maximum number of factors using a particular copy of \supGr, is close to the average over all copies of \supGr.
 For all $i\leq t$, we will have that $\mathcal{B}_{i-1}$ and $\mathcal{R}_{i-1}$ imply
 \begin{equation}\label{bound_on_xi}\xi_i=o(\log^{-1} n).\end{equation}
Our martingale analysis, will give us that $\Pr(|X_t|>n)<n^{-\omega (1)}$ (i.e. $|X_t|\leq\Order(n)$), and if we have $\mathcal{B}_i$ and $\mathcal{R}_i$ for $i< t \leq T$, we will then have that $X_t=\sum_{i=1}^{t}\xi_i-\gamma_i$ and therefore
\[\sum_{i=1}^{t}\xi_i<\sum_{i=1}^{t}\gamma_i +\Order(n) <\Order(n\log n).\]
Using this, (\ref{logF_Tbound}), (\ref{bound_on_xi}) and the series expansion for $\log(1-x)$ we get that
\[\log |\mathcal{F}_t|>\log |\mathcal{F}_0|-\sum_{i=1}^{t}(\xi_i+\xi_i^2)>\log |\mathcal{F}_0|-\sum_{i=1}^{t}\gamma_i-\Order(n).\]
As in the graph case, we are left with three possibilities for the failure of this to occur and hence,
\[\Pr(\overline{\mathcal{A}}_t)<\sum_{i<t} \Pr(\overline{\mathcal{R}}_i) +\sum_{i\leq t} \Pr(\wedge_{j<i}(\mathcal{B}_j \mathcal{R}_j) \wedge \overline{\mathcal{A}}_i)+\sum_{i<t}\Pr(\mathcal{A}_i \mathcal{R}_i \overline{\mathcal{B}}_i).\]
The previously mentioned martingale analysis shows that the second term is at most $n^{-\omega(1)}$, and we follow the same processes as \cite{JKV} in Section \ref{Section:RemainingResults} to show that for $i\leq T$
\begin{equation}\label{RdoesNotFail18}
\Pr(\overline{\mathcal{R}}_i)<n^{-\omega(1)}
\end{equation}
and
\begin{equation}\label{AandRmeansB19}
\Pr(\mathcal{A}_i \mathcal{R}_i \overline{\mathcal{B}}_i)<n^{-\omega(1)}.
\end{equation}
These three bounds give us the required result.
\end{proof}

In the next sections we outline the generalisation of the various results from the original factors paper, and include some notes on how we can apply them to our multigraph situation. Rather than just present a slightly modified reproduction of \cite{JKV}, and to make this generalisation of more value to the reader, we have first drawn out what Johansson, Kahn and Vu referred to as `the heart of the matter' and presented it as a stand-alone result, with our required generalisation and then continuing with the surrounding proofs.

As demonstrated above, the factor result follows from showing that 

\[
\Pr(\overline{\mathcal{R}}_i)<n^{-\omega(1)}
\]
and
\[
\Pr(\mathcal{A}_i \mathcal{R}_i \overline{\mathcal{B}}_i)<n^{-\omega(1)}.
\]

In proving the second equation, a second graph property $\mathcal{C}$ is introduced. The following shows that the failure of $\mathcal{C}$ results in two sets differing by a single vertex, such that the number of factors on the complement of these sets (subject to some restrictions) vary significantly. The proof of this revolves around the use of entropy results that we describe below, while in Section \ref{SectionCFailUnlikely}, concentration results are used to demonstrate that the event is unlikely as required.

\subsection{Entropy}
We follow the results of chapter 6 of the factors paper but are left with a modification to make to their Lemma 6.1. As in the original, we have $H(X)$ to be the base entropy of a discrete random variable $X$, i.e.,
\[H(X)=\sum_x p(x) \log \frac{1}{p(x)},\]
where $p(x)=\Pr(x=X).$ Now, in our case, given a vertex $y$ in a random multigraph $G$, we use $X(y, G)$ to be the copy of \supGr\ in a randomly chosen \supGr -factor, and that $\entropyFunction (y, G)=H(X(y, G))$. We will require a slightly different result, than in the original, as we will only be interested in vertices $y$ from a single partition set of our random multigraph. We suppose that our multigraph has the same structure as \supGNP, i.e. any copy of \supGr\ will contain one vertex from each partition set of $G$. Given $V_1$, a partition set of $G$, we have the following
\begin{lemma}\label{entropy}
\[\log \Phi (G) \leq \sum_{y\in V_1} h(y, G).\] 

\end{lemma}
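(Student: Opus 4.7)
The plan is to prove this by a standard entropy compression argument, adapted to exploit the partition structure of $G$. Let $F$ denote a \supGr-factor chosen uniformly at random from the $\Phi(G)$ many \supGr-factors of $G$, so that $H(F)=\log\Phi(G)$. For each vertex $y$ of $G$, let $X(y,F)$ denote the (random) copy of \supGr\ in $F$ that contains $y$, so that by definition $h(y,G)=H(X(y,F))$.

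The key structural observation, and the one place where the restriction to a single partition class matters, is this: because $G$ has the same partite structure as \supGNP, every copy of \supGr\ in any factor of $G$ contains exactly one vertex of $V_1$. Consequently, for a fixed factor $F$ the map $y\mapsto X(y,F)$ restricted to $y\in V_1$ is a bijection between $V_1$ and the set of copies of \supGr\ appearing in $F$. In particular, $F$ can be completely recovered from the tuple $(X(y,F))_{y\in V_1}$, simply by taking the collection of distinct values in this tuple. Hence $F$ is a deterministic function of $(X(y,F))_{y\in V_1}$, and so
\[
H(F)\;\le\;H\bigl((X(y,F))_{y\in V_1}\bigr).
\]

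Now I would invoke the subadditivity of Shannon entropy on the right-hand side, giving
\[
H\bigl((X(y,F))_{y\in V_1}\bigr)\;\le\;\sum_{y\in V_1}H(X(y,F))\;=\;\sum_{y\in V_1}h(y,G).
\]
Chaining the two inequalities yields $\log\Phi(G)\le\sum_{y\in V_1}h(y,G)$, as required.

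There is no real technical obstacle; the only point that needs care is the structural claim that restricting the sum to $V_1$ still determines the factor, which fails in the unpartitioned JKV setting (where one must sum over all vertices, since the vertex set of a copy of $H$ is not pinned down to one partition class). This is precisely why the partite assumption on $G$ — inherited from the definition of \supGNP\ — allows us to get away with a sum over just $n/\nV$ vertices rather than over all $\nVH n/\nV$ vertices of $G$, which is exactly the strengthening needed for the subsequent entropy estimates in the rest of the proof.
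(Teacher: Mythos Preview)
Your proof is correct and essentially the same as the paper's. The paper invokes Shearer's Lemma with $t=1$ (each copy of $\supGr$ lies in exactly one $S_y$ for $y\in V_1$), and Shearer with $t=1$ is precisely the ``determinism plus subadditivity'' argument you wrote out: the vector $(X(y,F))_{y\in V_1}$ determines $F$, so $H(F)\le H((X(y,F))_{y\in V_1})\le\sum_{y\in V_1}h(y,G)$.
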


\begin{proof}
This result follows in the same way as in the original, using a variant statement of Shearer's Lemma. This states that, given a random vector $Y=(Y_i:i\in I)$, and $\mathcal{S}$, a collection of subsets of $I$, with repeats allowed, such that each element of $I$ belongs to at least $t$ members of $\mathcal{S}$. For $S\in\mathcal{S}$ let $Y_S$ be the random vector $Y_i:i\in S$. Then $H(Y)\leq t^{-1} \sum_{S\in\mathcal{S}} H(Y_S)$. If we let $Y$ be the indicator for the random \supFact, then $I$ is the set of copies of \supGr\ in the complete form of our multigraph and $\mathcal{S}$ is the collection of sets $S_x$, where $S_x$ is the collection of copies of \supGr\ containing a vertex $x$, taken only over $x\in V_1$. We have, therefore, that each copy of \supGr, belongs to exactly one $S_x$ in $\mathcal{S}$, and so $t=1$. It follows that;
\[H(Y)=\sum_{\Phi(G )}\frac{1}{\Phi(G )} \log (\Phi(G ))=\log (\Phi(G ))\]
and since $H(Y_S)=h(y, G)$, the proof is complete.
\end{proof}

The second entropy result of \cite{JKV}, namely Lemma 6.2, is not specific to graphs, and hence requires no generalisation for our uses. We state it below for reference.

We let $S$ be a finite set, $W: S\rightarrow \mathcal{R}^+$, and let $X$ be the random variable taking values in $S$ with probability

\[\Pr(X=x)=W(x)/W(S),\]
where, for a set $A\subseteq S$, $W(A)$ is the sum of $W$ over the members of $A$, i.e. $W(A)=\sum_{x\in A} W(x)$.

\begin{lemma}\label{2ndEntropyLemma}
If $H(X) > \log |S| -\Order(1)$, then there are $a,b \in range(W)$ with 
\[a\leq b < \Order(a)\]
such that for $J=W^{-1}[a,b]$ we have,
\[|J|=\Omega(|S|)\]
and
\[W(J)>0.7W(S).\]

\end{lemma}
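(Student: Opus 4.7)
The plan is to read the hypothesis $H(X) \geq \log|S| - \Order(1)$ as a quantitative near-uniformity statement for the distribution $p(x) := W(x)/W(S)$, and then extract the weight and count conclusions from two simple tail estimates for $W$. Setting $\mu := W(S)/|S|$, the identity $H(X) = \log W(S) - \sum_x p(x) \log W(x)$ recasts the hypothesis as $\E_p[\log(W(X)/\mu)] \leq \Order(1)$. The elementary inequality $t \log t \geq -1/e$ for $t>0$ gives a matching unconditional lower bound $\E_p[\log(W/\mu)] \geq -1/e$, so the expected log-weight is pinned in a constant window around $0$.

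I then derive two tail bounds. For the upper tail, splitting the expectation at the threshold $2^L \mu$ gives $\Order(1) \geq L \cdot p(W > 2^L \mu) - 1/e$, hence $p(W > 2^L \mu) = \Order(1/L)$. The lower tail is immediate from $p \propto W$:
\[
p(W < 2^{-K} \mu) \;=\; \frac{1}{W(S)} \sum_{W(x) < 2^{-K} \mu} W(x) \;<\; 2^{-K}.
\]
Choosing constants $L, K$ large enough that these tails together contribute less than $0.3$, the set $J$ of $x$ with $W(x) \in [a,b] := [2^{-K}\mu, 2^L \mu]$ satisfies $W(J) > 0.7\,W(S)$, and $b/a = 2^{L+K} = \Order(1)$, giving two of the three required conclusions.

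For the remaining bound $|J| = \Omega(|S|)$, I would apply the grouping inequality for entropy across the partition $S = J \sqcup J^c$: the conditional entropies are bounded by $\log|J|$ and $\log|J^c|$, so $H(X) \leq \log 2 + p(J) \log|J| + (1 - p(J)) \log|S|$. Combining this with $H(X) \geq \log|S| - \Order(1)$ and $p(J) \geq 0.7$ rearranges to $\log(|S|/|J|) = \Order(1)$, i.e.\ $|J| = \Omega(|S|)$. I do not foresee a genuine obstacle here: the only step that nontrivially uses the entropy hypothesis is the upper tail estimate; the lower tail is structural and the count bound is a one-line consequence of entropy subadditivity. The only care needed is to fix constants in the correct order, since the hidden constant in $H(X) \geq \log|S| - \Order(1)$ determines how large $L, K$ must be, and those in turn determine the $\Omega(1)$ constant in $|J|/|S|$.
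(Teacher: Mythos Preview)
The paper does not give its own proof of this lemma: it is quoted verbatim from \cite{JKV} (their Lemma~6.2) and the authors explicitly note that it ``is not specific to graphs, and hence requires no generalisation for our uses.'' So there is no in-paper proof to compare against; I can only assess your argument on its own merits.

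Your proof is correct. The key identity $\E_p[\log(W/\mu)] = \log|S| - H(X)$ (which is just the KL divergence $D(p\|u)$ of $p$ from the uniform distribution $u$) immediately gives both the upper bound $\Order(1)$ from the hypothesis and the lower bound $\geq 0$---your $-1/e$ bound via $t\log t \geq -1/e$ also works but is slightly weaker than necessary. The upper-tail Markov-type estimate and the lower-tail bound $p(W<2^{-K}\mu)<2^{-K}$ are both sound, and the final entropy-grouping step to get $|J|=\Omega(|S|)$ is exactly right. One cosmetic point: the lemma asks for $a,b\in\mathrm{range}(W)$, while your $2^{-K}\mu$ and $2^L\mu$ need not be; but replacing them by $\min_J W$ and $\max_J W$ leaves $J=W^{-1}[a,b]$ unchanged and preserves $b/a\leq 2^{K+L}=\Order(1)$, so this is harmless.
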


\subsection{The heart of the matter}\label{the_heart_of_the_matter}

We let $\Phi(G)$ be the number of \supGr -factors on a partitioned multigraph $G$,  $\mathcal{V}_0$ be the set of vertex sets of size \nVH\ in \supGNP\ with a vertex from each partition set, and $\supGr(x,G)$ be the set of copies of \supGr\ in $G$ containing the vertex $x$, again with each vertex from a separate partition set. We define $D(x,G)=|\supGr(x,G)|$ to be the number of copies of \supGr\ in $G$ containing a vertex $x$, while $D(p)$ is the expectation of $D(x,\supGNP)$ in \supGNP\ given a randomly chosen $x$.

For $Z$, a disjoint union of elements of $\mathcal{V}_0$, we define $\weight:\mathcal{V}_0 \rightarrow \mathcal{R}^{+}$ as $\weight(Z)=\Phi (\supGNP\setminus Z)$, i.e. the number of partial \supGr -factors in \supGNP\, with only the vertices in $Z$, not covered.

Lastly, fixing a set of vertices, $Y$ of size $\nVH-1$, taken from separate partition sets and two vertices $x$ and $y$, both from the remaining partition set in $V(\supGNP\setminus Y)$ we define $\wxy : \supGr(x,\supGNP-\{Y\cup \{y\}\}) \rightarrow \mathcal{R}^{+}$ as $\wxy (K)=\weight (K\cup Y \cup\{y\})$. In simple terms, $\wxy$ can be thought of as the number of $H$-factors on $\supGNP\setminus \{Y\cup\{y\}\}$ that use $K$ as the copy of \supGr\ containing $x$ in the \supGr -factor. 

\begin{definition}\label{definitionAp}
We say \supGNP\ satisfies $\mathcal{A}(p)$ if the following holds
\[\log(\Phi(\supGNP))>\frac{\nVH-1}{\nV}n\log n + \frac{\nEH n}{\nV}\log p -\Order(n).\]
\end{definition}

\begin{definition}\label{definitionOfRb}
We say that \supGNP\ satisfies $\mathcal{R}_b(p)$ if the following condition holds.

For each $x\in V$, $|D(x,\supGNP)-D(p)|=o(D(p))$.
\end{definition}
Informally, $\mathcal{A}(p)$ says that the number of factors is close to expectation, while  $\mathcal{R}_b(p)$ says the same for the number of copies of \supGr\ that each vertex of \supGNP\ is in.

For a $\nVH-1$ subset of $V(\supGNP)$, as always with each vertex taken from different partition sets; $Y$, let $\mathcal{V}_0(Y)$ be the set of $\nVH$ subsets containing $Y$, with the final vertex taken from the remaining partition set.
\begin{definition}\label{definitionOfC}
We define $\mathcal{C}$ for \supGNP\ as follows: \supGNP\ satisfies $\mathcal{C}$ if for all $\nVH-1$ subsets of \supGNP , $Y$ as above, we have the following:
\[\max \weight (\mathcal{V}_0(Y))\leq \max\{n^{-2(\nVH-1)}\Phi(\supGNP), 2\mathrm{med}  \weight (\mathcal{V}_0(Y))\}\] 
\end{definition}

We prove the following Theorem;

\begin{theorem}\label{TheoremBadSetsExist}
$\mathcal{AR}_b\mathcal{\overline{C}}$ implies that there exists a set of vertices $Y$, each taken from $|Y|=\nV-1$ different partition sets of \supGNP, and $x,y$ in the remaining partition set, such that we can find a collection $J$ of elements of $H(y,\supGNP -(Y\cup\{x\}))$ and $J'$ from $H(x,\supGNP -(Y\cup\{y\}))$ with $|J|>\Omega(|H(y,\supGNP -(Y\cup\{x\}))|)$, and $\wyx^{-1}|J|=\wxy^{-1}|J'|=[a,b]$ with $a\leq b< \Order(a)$ satisfying

\[
\sum_{X\in J}\wyx(X)>0.7\weight (Y\cup\{x\})
\]
and
\[
\sum_{X\in J'}\wxy(X)\leq 0.5\weight (Y\cup\{x\})
\]

\end{theorem}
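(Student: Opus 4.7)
The plan is to use $\overline{\mathcal{C}}$ to pin down $Y$ and $x$, combine $\mathcal{A}$ with $\mathcal{R}_b$ via Lemma \ref{entropy} to locate a partition-class vertex $y$ at which the local entropy is nearly maximal, then feed $\wyx$ into Lemma \ref{2ndEntropyLemma} to obtain the interval $[a,b]$ and the set $J$; the set $J'$ is defined to be the $\wxy$-preimage of that same $[a,b]$, and the upper bound on its $\wxy$-mass comes for free from the median gap supplied by $\overline{\mathcal{C}}$.

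By $\overline{\mathcal{C}}$, fix a $(\nVH{-}1)$-set $Y$ of the required form and an $x$ in the remaining partition class $V_1$ achieving
\[
\weight(Y\cup\{x\})=\max\weight(\mathcal{V}_0(Y))>\max\!\bigl\{n^{-2(\nVH-1)}\Phi(\supGNP),\;2\,\mathrm{med}\,\weight(\mathcal{V}_0(Y))\bigr\}.
\]
Write $G:=\supGNP\setminus(Y\cup\{x\})$ and $V_1':=V_1\setminus\{x\}$. Lemma \ref{entropy} applied in $G$ to $V_1'$, together with the trivial bound $h(y',G)\le\log|\supGr(y',G)|=\log D(y',G)$, yields
\[
\log\weight(Y\cup\{x\})=\log\Phi(G)\le\sum_{y'\in V_1'}h(y',G)\le\sum_{y'\in V_1'}\log D(y',G).
\]
Under $\mathcal{R}_b$ one has $D(y',G)\le D(y',\supGNP)=(1+o(1))D(p)$, and since $\log D(p)=(\nVH{-}1)\log n+\nEH\log p+\Order(1)$ the right-hand sum is bounded above by $\tfrac{\nVH-1}{\nV}n\log n+\tfrac{\nEH n}{\nV}\log p+\Order(n)$. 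The $\overline{\mathcal{C}}$-lower bound on $\weight(Y\cup\{x\})$, combined with $\mathcal{A}$, matches this from below, so $\sum_{y'\in V_1'}(\log D(y',G)-h(y',G))=\Order(n)$.

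Since $|V_1'|=\Theta(n)$, Markov's inequality produces a constant $C$ such that at least two-thirds of $y'\in V_1'$ satisfy $h(y',G)\ge\log D(y',G)-C$. Because the maximum $\weight(Y\cup\{x\})$ exceeds twice the median, at least half of $y'\in V_1'$ satisfy $\weight(Y\cup\{y'\})<\weight(Y\cup\{x\})/2$. Inclusion--exclusion provides some $y$ meeting both conditions, which we fix. For this $y$, the distribution featuring in Lemma \ref{entropy} assigns mass $\wyx(K)/\weight(Y\cup\{x\})$ to each $K\in\supGr(y,G)$, so its entropy is precisely $h(y,G)\ge\log|\supGr(y,G)|-\Order(1)$. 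Lemma \ref{2ndEntropyLemma} with $S=\supGr(y,G)$ and $W=\wyx$ then produces $a\le b<\Order(a)$ and $J:=\wyx^{-1}[a,b]$ with $|J|=\Omega(|\supGr(y,G)|)$ and $\sum_{K\in J}\wyx(K)>0.7\,\weight(Y\cup\{x\})$.

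Setting $J':=\wxy^{-1}[a,b]$ for the same interval completes the data, and the remaining estimate is immediate:
\[
\sum_{K'\in J'}\wxy(K')\le\sum_{K'\in\supGr(x,\supGNP\setminus(Y\cup\{y\}))}\wxy(K')=\weight(Y\cup\{y\})<\tfrac12\weight(Y\cup\{x\}).
\]
The main obstacle is the entropy accounting in the second paragraph: the $\overline{\mathcal{C}}$ lower bound on $\weight(Y\cup\{x\})$ and $\mathcal{A}$ together must force $\sum_{y'\in V_1'}h(y',G)$ to agree with $\sum_{y'\in V_1'}\log D(y',G)$ up to $\Order(n)$, so that a positive fraction of $y'$ is simultaneously low-weight and of near-maximal local entropy; once that sandwich is secured, the rest of the argument is just packaging around Lemma \ref{2ndEntropyLemma}.
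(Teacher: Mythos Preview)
Your proof is correct and follows the same route as the paper: use $\overline{\mathcal{C}}$ to fix $Y,x$, combine $\mathcal{A}$ with Lemma~\ref{entropy} and $\mathcal{R}_b$ to locate a low-weight $y$ with near-maximal local entropy, and feed $\wyx$ into Lemma~\ref{2ndEntropyLemma}. The only difference is tactical: the paper picks $y$ as the entropy-maximiser among the below-median half and bounds $h(y,G)$ by splitting the entropy sum into two halves, whereas you run Markov on the nonnegative defects $\log D(y',G)-h(y',G)$ and intersect with the below-median half by pigeonhole; both reach the hypothesis of Lemma~\ref{2ndEntropyLemma} in the same way.
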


\begin{proof}
Suppose that $\mathcal{A}$, $\mathcal{R}_b$ hold but that $\mathcal{C}$ fails. Therefore we can find at least one set $Y$ at which $\mathcal{C}$ fails. We therefore know that there exists $x$, such that $\weight (Y\cup\{x\})$ is maximum for choices of $x$ and satisfies
\[\weight (Y\cup\{x\})>n^{-2(\nVH-1)}\Phi(\supGNP).\]
We now choose $y$ with $\weight (Y\cup \{y\})\leq$ med $ \weight (\mathcal{V}_0(Y))$, and $h(y,\supGNP-(Y\cup \{x\}))$ maximal, given this constraint.

Given $\mathcal{A}$, we know that 
\[\log(\Phi(\supGNP))>\frac{\nVH-1}{\nV}n\log n + \frac{\nEH n}{\nV}\log p -\Order(n).\]
While the failure of $\mathcal{C}$ tells us that \[\weight (Y\cup\{x\})=\Phi(\supGNP-(Y\cup\{x\}))>n^{-2(\nVH-1)}\Phi(\supGNP),\]
hence, combining the two we have,
\begin{equation}\label{PhiGWithoutY}
\log \Phi (\supGNP-(Y\cup\{x\}))>\frac{\nVH-1}{\nV}n\log n + \frac{\nEH n}{\nV}\log p -\Order(n).\end{equation}

We use Lemma \ref{entropy} and apply it to the graph $\supGNP-(Y\cup\{x\})$. Letting $V_1$ be the partition set containing $x$ and $y$, we have,
\begin{equation}\label{logPhileq}
\log \Phi (\supGNP-(Y\cup\{x\}))\leq \sum_{z\in V_1} h(z, \supGNP-(Y\cup\{x\})).
\end{equation}

However, we know that we chose $y$ to have maximal entropy, chosen from a set of at least half of possible such $z$, and we also have that for any random variable $X$, the entropy $H(X)\leq \log(|\mathrm{range}(X)|)$ (with equality only if the variable is uniformly distributed).

The range of our random variable is contained in the set of copies of $H$ containing the fixed vertex $z$ in $\supGNP-(Y\cup\{x\})$, which by definition, is of size $D(z, \supGNP-(Y\cup\{x\}))\leq D(z, \supGNP)$. We know from $\mathcal{R}_b$ that this is less than $(1+o(1))D(p)$. Hence, we have that at least half the $z$'s in (\ref{logPhileq}) satisfy $h(z, \supGNP-(Y\cup\{x\}))\leq h(y, \supGNP-(Y\cup\{x\}))$ and the remaining, $n/2\nV$ all satisfy $h(z, \supGNP-(Y\cup\{x\})\leq \log( (1+o(1))D(p))$. Therefore, (\ref{logPhileq}) gives us,
\begin{eqnarray}
\lefteqn{\log(\Phi (\supGNP-(Y\cup\{x\})))\leq \sum_{z\in V_1} h(z, \supGNP-(Y\cup\{x\}))} \notag\\
&\leq &\frac{n}{2\nV}(h(y, \supGNP-(Y\cup\{x\})+\log(1+o(1))D(p))) \notag\\
&\leq &  \frac{n}{2\nV}(h(y, \supGNP-(Y\cup\{x\}) +(\nVH-1)\log n +\nEH\log p).\notag
\end{eqnarray} 

Rearranging, to get $h(y, \supGNP-(Y\cup\{x\}))$ on the left, and substituting from (\ref{PhiGWithoutY}) we have,
\[h(y, \supGNP-(Y\cup\{x\}))\geq (\nVH-1) \log n +\nEH \log p -\Order(1).\]

By $\mathcal{R}_b$ we have that 
\begin{eqnarray}
\log (D(y, \supGNP-(Y\cup\{x\})))&\leq& \log D(y,\supGNP)\leq \log ((1+o(1))D(p)) \notag\\
&=&(\nVH-1) \log n +\nEH \log p +\log (1+o(1)),\notag
\end{eqnarray} 
and hence combining with the above, we have
\begin{equation}\label{RangeOfh}
h(y, \supGNP-(Y\cup\{x\}))>\log(D(y, \supGNP-(Y\cup\{x\}))-\Order(1).
\end{equation}

We now use our functions $\wyx$ and $\wxy$, previously defined as;
\[\wyx(K)=\weight (K\cup Y \cup\{x\}) \mbox{ and similarly }\wxy(K)=\weight (K\cup Y \cup\{y\}).\]
With $\wyx$ defined on $H(y,\supGNP-(Y\cup\{x\}))$; the set of copies of $\supGr$ containing $y$ in $\supGNP-(Y\cup\{x\})$, and similarly, $\wxy$ defined on $H(x,\supGNP-(Y\cup\{y\}))$. Simply put, for a copy of $\supGr$ containing $y$ in $\supGNP-(Y\cup\{x\})$, $\wyx$ is the number of $\supGr$-factors on this set, using that copy of $\supGr$.

If we consider the random variable $X(y,\supGNP-(Y\cup\{x\}))$, which is the copy of $\supGr$ containing $y$ in a uniformly at random chosen $\supGr$-factor on $\supGNP-(Y\cup\{x\})$, we can see that the probability that $X(y,\supGNP-(Y\cup\{x\}))=\supGr'$ for $\supGr'\in \supGr(y,\supGNP-(Y\cup\{x\})$, is 

\[\wyx(\supGr')/\sum_{Z\in \supGr(y,\supGNP-(Y\cup\{x\}))}\wyx(Z).\]
Also note that the denominator is equal to $\weight (\supGNP-(Y\cup\{x\}))$, since by summing only over copies of $\supGr$, we are counting each $\supGr$-factor exactly once. 

Similarly, $X(x,\supGNP-(Y\cup\{y\}))$ is determined by $\wxy$, and the sum $\sum_{Z}\wxy(Z)$ is equal to $\weight ((Y\cup\{y\}))$.

By the above, we have the setup used for Lemma \ref{2ndEntropyLemma}, with $S=\supGr(y,\supGNP-(Y\cup\{x\}))$. Noting that $|S|=D(y, \supGNP-(Y\cup\{x\}))$, (\ref{RangeOfh}) gives us the required condition, and we are able to apply the result to $\wyx$. This implies that there exist $a$ and $b\in$ $range(\wyx)$, for which we can set $J:=\wyx^{-1}([a,b])$, and it will satisfy the following:
\[|J|>\Omega(|\supGr(y,\supGNP-(Y\cup\{x\}))|)\]
and
\[\sum_{z\in J}\wyx(Z)>0.7 \sum_{z\in H(y,G-(Y\cup\{x\})} \wyx(Z)=0.7\,\weight (Y\cup\{x\}).\]
In simple terms, $J$ is of the same magnitude in size as the whole pre-image of $\wyx$, and its elements have overall weight at least a constant multiple of that of the whole set.

Equally we can set $J'=\wxy^{-1}([a,b])$, and we know that \[\sum_{J'}\wxy(Z)\leq \weight (Y\cup\{y\})<0.5\weight(Y\cup\{x\})\]
The first inequality follows from simply summing over the full set containing $J'$, and the 2nd from our original definition of $y$ and $x$. This completes the proof.
\end{proof}

Proving that this is a.a.s. unlikely to happen, requires a range of concentration and technical lemmas, demonstrated in the following sections. In applying this result to Shamir's problem, if instead of considering factors, a matching of hyperedges is required, it has been shown that the proof follows with much more ease using a union bound argument, reducing the technical complexity of the proof considerably \cite{FRI12}.

% but it has been shown by ... TODO FRIESE %HERE that if instead of a factor of graphs, you look %at a matching for hyperedges, the result %follows much in a simpler fashion, by a union bound %argument.
%

\section{Generalisation of remaining results from \newline \cite{JKV}}\label{Section:RemainingResults}

The generalisation to partitioned structures and multigraphs of the remaining results and properties of \cite{JKV}, follow largely from careful consideration of sums and bounds, and formulation of polynomials. The following sections follow the structure of \cite{JKV} closely, and are largely a technical exercise, that offer little to those who have read the original paper. 

To highlight why the generalisation should follow, we note that while limiting the factors to these partitions appears to drastically limit the number of possible copies of $H$, since each partition is of linear in $n$ size, we still have $\Order(n^{\nVH})=\Order\binom{n}{\nVH}$, possible choices of vertices for each $H$, as in the standard case.

We also address the threshold required for applying Theorem \ref{MainTheoremPartionMultigraphResult} in obtaining Theorem \ref{MainTheoremIntro}. We are not guaranteed strict balance for the resulting \supGr, but regardless, the collapsing process, eliminates all subgraphs of density $m(H)$, and hence, $m(\supGr)<m(H)$ and so for $p>n^{-1/m(H)}$, we have a greater probability than required within the proof and so with Theorem \ref{Number of factors}, applied to \supGNP, on the partial factors already embedded during the collapsing process, we have Theorem \ref{MainTheoremIntro} as required. 

Throughout the proofs, for clarity in understanding our main result, we treat \supGr, as the graph formed by the collapsing process on some $H$, and that we have $p>n^{-1/m(H)}$ as in Theorem \ref{MainTheoremIntro}, but for proving Theorem \ref{MainTheoremPartionMultigraphResult} in full generality, \supGr\ may not necessarily be derived from some $H$, and we only have $p>n^{-1/m(\supGr)+o(1)}$ (or with a log term for the strictly balanced case). In this case the proof is unchanged, as throughout, as in \cite{JKV}, we only require that if $p=\Order(p^{-1/a})$, \supGr\ contains no subgraphs of density equal to $1/a$, and that $n^{\nVH-1}p^{\nEH}=\omega(\log n)$, which follows immediately from the conditions in Theorem \ref{MainTheoremPartionMultigraphResult}, given that the $o(1)$ term decreases sufficiently slowly.

\subsection{Concentration Results}\label{concentration}
Firstly we address the usage of the various concentration results in Section 5 of \cite{JKV}. These results are largely special cases of results by V. Vu that can be found in \cite{DBLP:journals_combinatorica_KimV00} (with J.H.Kim),   \cite{Vu:2000:CMP:349225.349231} and \cite{RSA:RSA10032_VanVu_NonLipschitz}.

We will utilise the various polynomial results here without modification, and hence will not repeat the proofs here again. 

We will require some of the notation used in this section later, which we outline now. We let $f=f(t_1,t_2,\dots,t_n)$, be a polynomial of degree $d$ with real coefficients. We say $f$ is normal if its coefficients are positive, with the maximum coefficient being $1$, and we note that the results here are also true for $\Order(1)$ normal polynomials, which simply means that the polynomial's coefficients have some fixed bound.

 We will consider polynomials that are multilinear which means we can express $f$ in the form $f(t)=\sum \alpha_U t_U,$ where $U$ ranges over subsets of $[n]$ and $t_U:=\prod_{u\in U}t_u$. 
 
 Lastly, we need that for a set $L\subseteq [n]$, the partial derivative of order $|L|$ with respect to the variables indexed by $L$ is $\sum_{U\supseteq L} \alpha_U t_{U\backslash L}$, and its expectation, denoted $\E_L$ or $\E_L f$ is $\sum\{ \alpha_U \prod_{i\in U\backslash L} p_i : U\supseteq L\}$, where $t_i\sim Ber(p_i)$. Set $\E_j f= \max_{|L|=j} \E_L f$. We write $\E_L^{'} =\E_L^{'} f$ for the expectation of the non-constant part of the partial derivative of $f$, with respect to $L$, noting that for homogeneous, $f$ of degree $d$, and $0<|L|<d$, we have $\E_L^{'} f=\E_L f$.

We take the original example used to illustrate the usage of these results, namely that we consider our polynomial $f$ to be the number of copies of \supGr\ in our random multigraph \supGNP\, containing a particular, fixed vertex $x_0$. 

We have that $f=\sum_U t_U$ where $U$ runs over edge sets of copies of \supGr\ in our complete multigraph, containing our vertex $x_0$. We have $\E f=\Theta( n^{\nVH-1}p^{\nEH})$, while for any non empty subset $L$ of edges from the complete graph, the partial derivative will be $\E_L f=\sum_{U\supseteq L}t_{U\backslash L}$. This will be 0 for $L$ that do not satisfy our multigraph structure requirements (i.e. at most one edge from each bipartite pairing forming the multigraph), in the same way that choosing an $L$ not forming a subgraph of $H$ would do, in the standard graph case. 

In all theorems in the chapter we are interested in ensuring that the maximum value of the derivative does not exceed a certain magnitude, and so in this sense, we are not interested in these cases, and so they cause no issue in this generalisation. 

Given that our choice of $L$ does satisfy our structure requirements, (and hence will be contained within at least one copy of \supGr\ in the complete graph), we can consider the graph formed by the edges of $L$, and the vertex end-points of these edges. Letting $\nVH'$ and $\nEH'$ be the number of vertices and edges respectively of $L$, then if $L$ contains $x_0$, we have $\E_L f=\Order(n^{\nVH-\nVH'}p^{\nEH-\nEH'})$, and $\Order(n^{\nVH-\nVH'-1}p^{\nEH-\nEH'})$ otherwise. Either way, we have,
\[\E f / \E_L f =\Omega (n^{\nVH'-1}p^{\nEH'}).\]
While we do not have strict balance of \supGr, we do have that it contains no subgraphs of density $m(H)$, and hence we have that $\nEH '/(\nVH'-1)<m(H)$ and recalling that $p=\omega(n^{-1/m(H)})$, we have that $\E f=\Omega(1)$ and that $\E f / \E_L f \geq n^{\Omega(1)}$, as is required for applying the results in the chapter.

For reference we include the results from the chapter below.

\begin{theorem}\label{5.1Concentration}
The following holds for any fixed positive integer $d$ and positive constant $\epsilon$. Let $f$ be a multilinear, homogeneous, normal polynomial of degree $d$ such that $\E f\geq n^\epsilon \max_{1\leq j \leq d} \E_j f$. Then
\[\Pr(|f-\E f| >\epsilon \E f)=n^{-\omega(1)}.\]
\end{theorem}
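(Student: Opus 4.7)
The plan is to deduce Theorem \ref{5.1Concentration} as an immediate corollary of the Kim--Vu polynomial concentration inequality of \cite{DBLP:journals_combinatorica_KimV00}. That inequality states that for a positive, multilinear polynomial $f$ of degree $d$ in independent Bernoulli variables and any $\lambda > 0$,
\[\Pr\bigl(|f - \E f| > a_d\lambda^d\sqrt{\mathcal{E}(f)\,\mathcal{E}'(f)}\bigr) \le b_d\,e^{-\lambda + (d-1)\log n},\]
where $\mathcal{E}'(f) := \max_{1\le j\le d}\E_j f$, $\mathcal{E}(f) := \max(\E f, \mathcal{E}'(f))$, and $a_d, b_d$ are constants depending only on $d$.

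First I would translate the hypothesis into the form Kim--Vu requires. Since $\E_\emptyset f = \E f$ and the theorem assumes $\E f \ge n^\epsilon\,\mathcal{E}'(f)$, we have $\mathcal{E}(f) = \E f$ and $\mathcal{E}'(f) \le n^{-\epsilon}\E f$, so the deviation in the Kim--Vu bound simplifies to
\[a_d\lambda^d\sqrt{\E f \cdot n^{-\epsilon}\E f} = a_d\lambda^d n^{-\epsilon/2}\,\E f.\]
Choosing $\lambda := (\epsilon/a_d)^{1/d}\, n^{\epsilon/(2d)}$ makes this deviation exactly $\epsilon\E f$, and the Kim--Vu bound becomes $b_d\,e^{-\lambda + (d-1)\log n}$.

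Since $\lambda$ grows as $n^{\epsilon/(2d)}$, which is polynomial in $n$ while $(d-1)\log n$ is merely logarithmic, the exponent tends to $-\infty$ faster than any constant multiple of $\log n$, yielding $\Pr(|f-\E f| > \epsilon\E f) \le e^{-\Omega(n^{\epsilon/(2d)})} = n^{-\omega(1)}$, as required. The only real obstacle is parameter bookkeeping: verifying that $\E_\emptyset f = \E f$ pins down $\mathcal{E}(f)=\E f$ rather than leaving it as a maximum, and solving for the exponent $\lambda$ that delivers precisely a relative error of $\epsilon$. No genuinely new probabilistic argument is needed, which is why the authors quote the result directly from Vu's polynomial concentration machinery \cite{DBLP:journals_combinatorica_KimV00, Vu:2000:CMP:349225.349231, RSA:RSA10032_VanVu_NonLipschitz} rather than reproving it.
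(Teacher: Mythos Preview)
Your proposal is correct and matches the paper's treatment: the paper does not reprove this statement but simply quotes it from \cite{JKV}, noting that it is a special case of the Kim--Vu polynomial concentration machinery \cite{DBLP:journals_combinatorica_KimV00, Vu:2000:CMP:349225.349231, RSA:RSA10032_VanVu_NonLipschitz}. You have written out the short parameter-chasing derivation from the Kim--Vu inequality that the paper (and \cite{JKV}) leave implicit, and the computation is sound.
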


\begin{theorem}\label{5.3Concentration}
The following holds for any fixed positive integer $d$ and positive constant $\epsilon$. Let $f$ be a multilinear, normal, homogeneous polynomial of degree $d$ such that $\E f=\omega (\log n)$ and $\max_{1\leq j \leq d-1} \E_j f\leq n^\epsilon$. Then
\[\Pr(|f-\E f| >\epsilon \E f)=n^{-\omega(1)}.\]
\end{theorem}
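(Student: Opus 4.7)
The plan is to obtain this as a direct application of Vu's polynomial concentration inequality (developed in the references cited just above the theorem), rather than reprove that machinery from scratch. The Kim--Vu inequality asserts that for a multilinear polynomial $f$ of constant degree $d$ in independent Bernoulli variables and any $\lambda \geq 1$,
\[
\Pr\!\left(|f - \E f| \geq a_d \lambda^d \sqrt{\mathcal{E}(f)\,\mathcal{E}'(f)}\right) \leq b_d\,n^{d-1} e^{-\lambda},
\]
where $\mathcal{E}'(f) = \max_{1\leq j\leq d}\E_j f$ and $\mathcal{E}(f) = \max(\E f,\mathcal{E}'(f))$. Since $f$ is normal and homogeneous of degree $d$, its coefficients are at most $1$, forcing $\E_d f \leq 1$, so the hypothesis of the theorem upgrades to $\mathcal{E}'(f) \leq n^{\epsilon}$.

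The first step is then to set $\lambda$ to a super-logarithmic function growing just faster than $\log n$, for instance $\lambda = (\log n)^{2}$; this makes the right-hand side $n^{d-1} e^{-(\log n)^{2}} = n^{-\omega(1)}$. The remaining step is to verify that the additive error $a_d\lambda^d\sqrt{\mathcal{E}(f)\mathcal{E}'(f)}$ is at most $\epsilon\,\E f$, which, after squaring, reduces to showing $\E f \gg \lambda^{2d}\,\mathcal{E}'(f)$. Given $\mathcal{E}'(f) \leq n^{\epsilon}$ and $\E f = \omega(\log n)$, this is to be handled by choosing $\epsilon$ small enough relative to the prescribed deviation.

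The main obstacle will be exactly this scale-matching in the borderline regime where $\E f$ only just exceeds $\log n$ while $\mathcal{E}'(f)$ sits close to $n^{\epsilon}$, since in that case the product $\mathcal{E}(f)\mathcal{E}'(f)$ entering the crude Kim--Vu bound is already too large for the raw inequality to deliver the desired $\epsilon\,\E f$ accuracy. The resolution is to invoke the sharper form of Vu's inequality from the ``Non-Lipschitz'' paper cited in the excerpt, which replaces the coarse quantity $\sqrt{\mathcal{E}(f)\mathcal{E}'(f)}$ by a weighted expression involving each $\E_j f$ individually, so that the bound is driven by the worst-behaved derivative rather than by a common upper bound on all of them. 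Matching our normalised homogeneous multilinear polynomial to the hypotheses of that refined inequality is routine, and the $n^{-\omega(1)}$ tail then follows without further work.
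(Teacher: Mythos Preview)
The paper does not prove this theorem at all: it is simply quoted (along with Theorems~\ref{5.1Concentration}, \ref{5.4Concentration}, \ref{5.6Concentration}, \ref{5.8Concentration} and Corollaries~\ref{5.5Concentration}, \ref{5.7Concentration}) as a special case of the polynomial concentration results of Kim and Vu, with the explicit remark ``We will utilise the various polynomial results here without modification, and hence will not repeat the proofs here again.'' Your proposal is therefore aligned with the paper's treatment, since you too defer the substance to the Kim--Vu and Vu references rather than build the machinery from scratch.

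One remark on your sketch: your line ``this is to be handled by choosing $\epsilon$ small enough relative to the prescribed deviation'' is not available to you, because $\epsilon$ is a single fixed constant appearing simultaneously in the hypothesis bound $n^{\epsilon}$ and in the deviation $\epsilon\,\E f$; you cannot tune it. You correctly diagnose this yourself in the next paragraph when you observe that the crude Kim--Vu bound fails in the borderline regime and that one must pass to the sharper weighted inequality from Vu's ``Non-Lipschitz'' paper. That pivot is exactly the right one, and once made, the earlier sentence about choosing $\epsilon$ should simply be dropped. (Note also that the paper's later applications use the hypothesis in the form $\E_j f \le n^{-\epsilon}\E f$, suggesting the $n^{\epsilon}$ in the displayed statement is a transcription slip for $n^{-\epsilon}$; under that reading the scale-matching difficulty you flag largely evaporates even for the basic inequality.)
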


\begin{theorem}\label{5.4Concentration}
The following holds for any fixed positive integer $d$ and positive constant $\epsilon$. Let $f$ be a multilinear, normal, homogeneous polynomial of degree $d$ such that $\E f=\omega (\log n)$ and $\max_{1\leq j \leq d-1} \E_j f\leq n^\epsilon \E f$. Then
\[\Pr(|f-\E f| >\epsilon \E f)=n^{-\omega(1)}.\]
\end{theorem}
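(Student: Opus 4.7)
The plan is to derive Theorem \ref{5.4Concentration} as a direct corollary of Vu's polynomial concentration inequalities in \cite{Vu:2000:CMP:349225.349231} and \cite{RSA:RSA10032_VanVu_NonLipschitz}, following the strategy made explicit in Section \ref{concentration}, where the authors choose to invoke those results as black boxes rather than reprove them. Theorems \ref{5.1Concentration}, \ref{5.3Concentration} and \ref{5.4Concentration} differ only in the precise quantitative form of the hypothesis on the partial-derivative expectations $\E_j f$, and all three are instances of a single concentration inequality for normal, homogeneous multilinear polynomials of fixed degree.

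Concretely, I would invoke Vu's inequality in the form that, for some constant $a_d$ depending only on $d$,
\[
\Pr\bigl(|f-\E f| > a_d \sqrt{\E f\cdot\max_{j\ge 1}\E_j f}\,\lambda^d\bigr)\le e^{-\lambda+\Order(\log n)}
\]
for every $\lambda>0$, and then match its hypotheses against Theorem \ref{5.4Concentration}. Under the stated assumptions, $\sqrt{\E f\cdot\max_{j\ge 1}\E_j f}$ is bounded by a polynomial-in-$n$ multiple of $\E f$, and the contribution from the $j=d$ term is controlled by the normal-coefficient assumption (since $f$ is homogeneous of degree $d$, the top-order derivatives are bounded by the maximum coefficient, which is $1$). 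I would then choose $\lambda$ as a slowly growing function of $n$, large enough that $e^{-\lambda}=n^{-\omega(1)}$ but small enough that the deviation coefficient $a_d\sqrt{\E f\cdot\max_j\E_j f}\,\lambda^d$ stays below $\epsilon\E f$; the hypothesis $\E f=\omega(\log n)$ is precisely what makes both requirements simultaneously achievable.

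The only real difficulty is the calibration of $\lambda$: one must find a joint window in which both $a_d\sqrt{\E f\cdot\max_j\E_j f}\,\lambda^d\le\epsilon\E f$ and $e^{-\lambda+\Order(\log n)}=n^{-\omega(1)}$ hold. This balancing is carried out inside Vu's papers for the general statement, so no genuinely new probabilistic input is required; the work is confined to checking that the hypotheses of Theorem \ref{5.4Concentration} match those demanded by the appropriate Vu inequality, which in this homogeneous, normal setting reduces to bookkeeping. This is why the authors themselves state that they will not reproduce the proof.
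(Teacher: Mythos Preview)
Your proposal is correct in spirit and matches the paper's own treatment: the paper does not prove Theorem \ref{5.4Concentration} at all but simply lists it among the concentration results imported from \cite{JKV}, \cite{DBLP:journals_combinatorica_KimV00}, \cite{Vu:2000:CMP:349225.349231} and \cite{RSA:RSA10032_VanVu_NonLipschitz}, stating explicitly that these are used ``without modification'' and that the proofs will not be repeated. Your plan to invoke Vu's inequality as a black box and calibrate the deviation parameter is exactly the intended route; there is nothing further to compare.

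One minor caution: the particular Kim--Vu form you wrote down, with deviation $a_d\sqrt{\E f\cdot\max_j\E_j f}\,\lambda^d$ and tail $e^{-\lambda+\Order(\log n)}$, does not by itself yield the conclusion under only $\E f=\omega(\log n)$, since the $j=d$ contribution forces $\lambda^d\le\Order(\sqrt{\E f})$ while you need $\lambda=\omega(\log n)$. The sharper iterated inequalities in \cite{Vu:2000:CMP:349225.349231} and \cite{RSA:RSA10032_VanVu_NonLipschitz} are what actually cover this regime; since you already cite those as the source, this is only a matter of pointing at the right theorem rather than a gap in the strategy.
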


\begin{corollary}\label{5.5Concentration}
The following holds for any fixed positive integer $d$ and positive constant $\epsilon$. Let $f$ be a multilinear, normal, homogeneous polynomial of degree $d$ such that $\E f\leq A$ where $A=A(n)$ satisfies
\[A\geq \omega (\log n) +n^\epsilon \max_{0< j < d} \E_j f,\]
then

\[\Pr(f> (1+\epsilon) A ) \leq n^{-\omega(1)}.\]
\end{corollary}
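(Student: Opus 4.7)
The plan is to reduce the statement to Theorem \ref{5.4Concentration} by inflating the expectation to exactly $A$ via an auxiliary polynomial. The difficulty is that the hypothesis gives only an upper bound $\E f \leq A$, so $\E f$ may be far smaller than $A$ and none of Theorems \ref{5.1Concentration}--\ref{5.4Concentration} can be applied to $f$ directly. I would therefore construct a nonnegative, multilinear, normal, homogeneous polynomial $g$ of degree $d$ in a fresh set of independent Bernoulli variables satisfying $\E g = A - \E f$ and $\max_{0<j<d} \E_j g = O(1)$, and apply Theorem \ref{5.4Concentration} to $f+g$. Because $g \geq 0$ pointwise on its variables, one has $f \leq f+g$, so an upper tail bound for $f+g$ transfers to $f$.

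The construction of $g$ proceeds by taking $N$ fresh Bernoulli$(1/2)$ variables organised into $\lceil N/d\rceil$ disjoint blocks of size $d$, and setting $g$ to be the sum over blocks $U_i$ of the monomial $\prod_{j\in U_i}s_j$. This is normal, multilinear, and homogeneous of degree $d$; its expectation is $\lceil N/d\rceil\cdot 2^{-d}$, which I can set equal to $A-\E f$ (up to an additive constant absorbed into the error term) by choosing $N$ appropriately. The disjointness of the blocks ensures that any index set $L$ with $|L|=j<d$ lies in at most one block, so $\E_L g \leq 2^{-(d-j)} = O(1)$, and hence $\max_{0<j<d} \E_j g = O(1)$.

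The remaining step is to verify the hypothesis of Theorem \ref{5.4Concentration} for $f+g$. We have $\E(f+g)=A=\omega(\log n)$ by assumption, and
\[\max_{0<j<d}\E_j(f+g)\leq \max_{0<j<d}\E_j f + \max_{0<j<d}\E_j g \leq n^{-\epsilon}A + O(1) \leq n^{\epsilon}\E(f+g),\]
the last inequality being comfortable since $A=\omega(\log n)$. Theorem \ref{5.4Concentration} then yields $\Pr(f+g>(1+\epsilon)A)\leq n^{-\omega(1)}$, and the monotone inequality $f\leq f+g$ completes the proof.

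The main obstacle, such as it is, lies in the auxiliary construction: we need $g$ to match $A-\E f$ in expectation without spoiling normality, homogeneity of degree $d$, or the partial-derivative bound on which Theorem \ref{5.4Concentration} relies. The disjoint-block construction handles all three requirements at once, so this is a technical check rather than a real difficulty; the substantive content of the corollary is carried entirely by Theorem \ref{5.4Concentration}.
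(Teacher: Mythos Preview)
The paper does not prove Corollary~\ref{5.5Concentration}; it quotes the result from \cite{JKV} (and ultimately from Vu's concentration papers), saying explicitly that these polynomial results are used ``without modification, and hence will not repeat the proofs here again.'' Your reduction to Theorem~\ref{5.4Concentration} via a dummy polynomial in fresh variables is precisely the argument \cite{JKV} gives for this corollary: add $g$ with $\E g = A - \E f$ and $\E_j g = O(1)$, apply the concentration theorem to $f+g$, and use $f \leq f+g$ pointwise.

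Two small remarks. First, because the variable sets of $f$ and $g$ are disjoint, any index set $L$ with $0<|L|<d$ has $\E_L(f+g)$ equal to either $\E_L f$ or $\E_L g$ (or zero if $L$ mixes the two), so in fact $\E_j(f+g)=\max(\E_j f,\E_j g)$ rather than their sum; your bound is valid but slightly loose. Second, if one reads the hypothesis of Theorem~\ref{5.4Concentration} as $\E_j f \leq n^{-\epsilon}\E f$ (as it is applied elsewhere in the paper and as stated in \cite{JKV}), then Bernoulli$(1/2)$ auxiliaries give $\E_j g = O(1)$, which need not be $\leq n^{-\epsilon'}A$ when $A$ is only $\omega(\log n)$. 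This is a cosmetic issue: take the auxiliary Bernoullis with parameter $n^{-c}$ for some small fixed $c>0$ (and correspondingly more blocks) so that $\E_j g \leq n^{-c}$, and the hypothesis is recovered with $\epsilon'=\min(\epsilon,c)$. The substance of your argument is correct and matches the standard proof.
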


\begin{theorem}\label{5.6Concentration}
The following holds for any fixed positive integer $d$ and positive constant $\epsilon$. Let $f$ be a multilinear, normal polynomial of degree $d$ with $\E f=\omega (\log n)$ and $\max_{L\neq \emptyset} \E_L^{'} f\leq n^\epsilon \E f$. Then
\[\Pr(|f-\E f| >\epsilon \E f)\leq n^{-\omega(1)}.\]
\end{theorem}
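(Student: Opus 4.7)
The plan is to prove Theorem \ref{5.6Concentration} by a direct martingale concentration argument of Kim--Vu type, rather than by reducing to Theorems \ref{5.1Concentration}--\ref{5.5Concentration}. A naive reduction via homogeneous decomposition $f = \sum_{k=0}^d f_k$ and componentwise application of Theorem \ref{5.4Concentration} fails to recover the same $\epsilon$ in hypothesis and conclusion: Theorem \ref{5.4Concentration} requires $\max_j \E_j f_k \leq n^{\epsilon''} \E f_k$, whereas our hypothesis only gives $\E_j f_k \leq n^\epsilon \E f \leq n^{\epsilon + o(1)} \E f_k$ when $\E f_k$ is a constant fraction of $\E f$, and the induced multiplicative loss in the exponent cannot be absorbed by rescaling. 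Conversely, when $\E f_k$ is a vanishing fraction of $\E f$ one loses still more, since no two-sided relative concentration of $f_k$ about $\E f_k$ is available.

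The approach is therefore the martingale method with partial-derivative control. Let $t_1, \ldots, t_N$ be the underlying independent Bernoulli variables, and consider the Doob martingale $M_i = \E[f \mid t_1, \ldots, t_i]$ with $M_0 = \E f$ and $M_N = f$. The increment $M_i - M_{i-1}$ is bounded by $|\partial_i f|$ evaluated on a partial configuration, where $\partial_i f = \sum_{U \ni i} \alpha_U t_{U \setminus \{i\}}$ is the formal partial derivative. Since $\partial_i f$ is itself a multilinear polynomial of degree $d - 1$ whose expected partial derivatives are precisely $\E_L f$ for $L \ni i$, and all of these are controlled by the hypothesis through $\E_L' f \leq n^\epsilon \E f$, one can concentrate each $\partial_i f$ around $\E_{\{i\}} f$ by induction on the degree $d$, the base case $d = 0$ being trivial and the inductive step being exactly a Kim--Vu-style polynomial concentration statement at degree $d-1$.

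Conditional on the high-probability event that every partial derivative, across every exposure order, is within a factor $n^{o(1)}$ of its mean, a standard Azuma-type exponential moment bound applied to the martingale $(M_i)$ yields $\Pr(|f - \E f| > \epsilon \E f) = n^{-\omega(1)}$. The main obstacle is running the union bound over partial-exposure configurations tightly enough so that the $\omega(\log n)$ slack in $\E f$ absorbs the $n^{o(1)}$ losses at every level of the induction; here the uniform hypothesis $\max_{L \neq \emptyset} \E_L' f \leq n^\epsilon \E f$ is essential, since it controls expected partial derivatives of all orders simultaneously and prevents the exponent from blowing up as derivatives are iterated. The remainder of the argument is the careful bookkeeping carried out in Vu's original proofs, which for this reason the paper rightly invokes as a black box.
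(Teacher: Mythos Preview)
The paper does not prove this theorem: Section~\ref{concentration} quotes it (along with Theorems~\ref{5.1Concentration}--\ref{5.4Concentration} and Corollaries~\ref{5.5Concentration}, \ref{5.7Concentration}) directly from \cite{JKV} and the Kim--Vu concentration papers, with the explicit remark ``We will utilise the various polynomial results here without modification, and hence will not repeat the proofs here again.'' You recognise this yourself in your final sentence, and the Doob-martingale-plus-inductive-derivative-control outline you give is precisely the Kim--Vu method that those references develop, so your sketch is faithful to the source the paper cites; there is nothing further to compare against in the paper itself.
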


\begin{corollary}\label{5.7Concentration}
The following holds for any fixed positive integer $d$ and positive constant $\epsilon$. Let $f$ be a multilinear, normal polynomial of degree $d$ such that $\E f\leq A$ where $A=A(n)$ satisfies
\[A\geq \omega (\log n) +n^\epsilon \max_{L\neq \emptyset} \E_L^{'} f,\]
then

\[\Pr(f> (1+\epsilon) A ) \leq n^{-\omega(1)}.\]
\end{corollary}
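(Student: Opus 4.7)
The plan is to deduce Corollary~\ref{5.7Concentration} from Theorem~\ref{5.6Concentration} by a \emph{padding} argument: augment $f$ to a polynomial $f^\star$ whose expectation is exactly $A$, apply Theorem~\ref{5.6Concentration} to $f^\star$, and transfer the upper tail bound back to $f$ via the pointwise inequality $f \leq f^\star$.

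First I would introduce fresh, mutually independent Bernoulli variables $u_1,\ldots,u_N$, independent of all variables of $f$, with $u_i \sim \mathrm{Ber}(p_i)$, where $N$ is chosen large enough that $p_i := (A - \E f)/N \in [0,1]$. Set $g = \sum_{i=1}^N u_i$ and $f^\star := f+g$. Then $f^\star$ is a multilinear normal polynomial of degree $\max(d,1)$ (the case $d=0$ being trivial, as $f$ is then deterministic) with $\E f^\star = A = \omega(\log n)$.

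The main step is verifying the partial-derivative hypothesis of Theorem~\ref{5.6Concentration} for $f^\star$. For $L \neq \emptyset$ involving only original variables of $f$, we have $\E'_L f^\star = \E'_L f \leq A/n^\epsilon \leq n^\epsilon A = n^\epsilon \E f^\star$. For $L$ containing some $u_i$, either $L = \{u_i\}$, in which case the $L$-derivative of $f^\star$ is the constant $1$ whose non-constant part vanishes, or $L$ also contains another variable and the $L$-derivative itself is $0$ (since no monomial of $f^\star$ contains $u_i$ together with any other variable); in either case $\E'_L f^\star = 0$. Hence $\max_{L \neq \emptyset} \E'_L f^\star \leq n^\epsilon \E f^\star$, so Theorem~\ref{5.6Concentration} applies and yields $\Pr(|f^\star - A| > \epsilon A) \leq n^{-\omega(1)}$. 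Since $g \geq 0$ pointwise we have $f \leq f^\star$, giving $\Pr(f > (1+\epsilon)A) \leq \Pr(f^\star > (1+\epsilon)A) \leq n^{-\omega(1)}$.

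I do not anticipate a genuinely hard step; this is a standard reduction of an upper tail bound to a two-sided concentration inequality. The only items requiring care are the choice of $N$ (large enough that each $p_i \leq 1$, otherwise free to depend on $n$) and the verification that the augmentation preserves multilinearity, normality, and the degree bound of $f^\star$ — all immediate from the definition of $g$ as a sum of fresh degree-one monomials in new variables.
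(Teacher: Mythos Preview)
The paper does not actually prove Corollary~\ref{5.7Concentration}; it lists the concentration results of Section~\ref{concentration} ``for reference'' and says explicitly that it will ``utilise the various polynomial results here without modification, and hence will not repeat the proofs here again,'' citing Vu and Kim--Vu for the originals. So there is no in-paper proof to compare against.

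Your padding argument is correct and is the standard way to derive this kind of corollary from the corresponding two-sided concentration theorem: augmenting $f$ by a sum of fresh independent Bernoulli variables to push the mean up to $A$, checking that the augmentation does not disturb the $\E'_L$ hypotheses (the new variables contribute only constant first derivatives, so their non-constant parts vanish), and then invoking Theorem~\ref{5.6Concentration} and monotonicity. The verification that $f^\star$ remains normal, multilinear, and of degree $d$ is straightforward as you note, and the hypothesis $A\ge \omega(\log n)+n^{\epsilon}\max_{L\ne\emptyset}\E'_L f$ gives both $\E f^\star=A=\omega(\log n)$ and $\max_{L\ne\emptyset}\E'_L f^\star \le n^{-\epsilon}\E f^\star$, which is exactly what Theorem~\ref{5.6Concentration} needs (the $n^{\epsilon}$ in the statement of Theorem~\ref{5.6Concentration} in this paper is a sign typo, as its applications elsewhere in the paper confirm). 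This is essentially the same reduction used in the original source \cite{JKV}.
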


\begin{theorem}\label{5.8Concentration}
The following holds for any fixed positive integer $d$ and positive constant $\epsilon$. Let $f$ be a multilinear, normal polynomial of degree $d$ with $\max_{L} \E_L^{'} f\leq n^\epsilon $. Then for any $\beta (n)=\omega(1)$,
\[\Pr(f>\beta (n))= n^{-\omega(1)}.\]
\end{theorem}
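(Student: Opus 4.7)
The plan is to prove this by the moment method, using the polynomial moment bounds underlying Vu's concentration results already invoked in this section. Unlike Theorems~\ref{5.6Concentration} and~\ref{5.7Concentration}, here we have no lower bound on $\E f$, so I would argue directly on the raw upper tail rather than on deviations from the mean; the hypothesis that \emph{every} partial-derivative expectation is bounded (including $\E'_\emptyset f$, which controls $\E f$ itself) is already enough structural input.

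Concretely, the key estimate I would establish (or cite from Vu's polynomial concentration papers) is that for a normal multilinear polynomial $f$ of degree $d$ and any positive integer $k$,
\[
\E[f^k] \le C(d,k)\, \bigl( \max_L \E_L^{'} f \bigr)^k,
\]
where $C(d,k) = k!\cdot d^{O(dk)}$. This is proved by expanding $f^k$ as a sum over $k$-tuples of monomials of $f$, grouping the resulting monomials according to the pattern in which their index sets overlap, and recognising each group (up to the combinatorial prefactor) as an iterated partial-derivative expectation $\E_L^{'} f$ for a suitable $L$. Plugging in the hypothesis $\max_L \E_L^{'} f \le n^{\epsilon}$ then yields $\E[f^k] \le C(d,k)\, n^{\epsilon k}$.

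Applying Markov's inequality to $f^k$ gives
\[
\Pr(f > \beta) = \Pr(f^k > \beta^k) \le \frac{\E[f^k]}{\beta^k} \le \frac{C(d,k)\, n^{\epsilon k}}{\beta^k}.
\]
To drive the right-hand side to $n^{-\omega(1)}$, I would take $k = k(n) \to \infty$ optimally: in the main regime $\beta \gg n^{\epsilon}$, the choice $k$ of order $(\log n)/\log(\beta/n^{\epsilon})$ suffices. For $\beta$ of smaller order one uses the fact that shrinking $\epsilon$ only strengthens the hypothesis, so one may assume $\epsilon$ is small enough that $\beta \gg n^{\epsilon}$ before invoking the bound.

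The main obstacle is the moment inequality itself. The bookkeeping required to identify each contribution to $\E[f^k]$ as a bounded $\E_L^{'} f$ is the delicate step: one must keep track of how the supports of the $k$ factor monomials intersect, determine which partial derivative in which directions captures each overlap pattern, and absorb the combinatorial multiplicities into the prefactor $C(d,k)$ without letting them blow up past $k!\, d^{O(dk)}$. Once that moment estimate is in hand, the Markov step and the choice of $k$ are routine.
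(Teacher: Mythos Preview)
First, note that the paper gives no proof of Theorem~\ref{5.8Concentration}: the entire block of concentration results in Section~\ref{concentration} is imported wholesale from \cite{JKV} and Vu's papers, with the explicit disclaimer that the proofs are not repeated. So there is no paper argument to compare against.

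Your proposed argument has a genuine gap: the moment inequality $\E[f^k]\le C(d,k)\bigl(\max_L \E_L' f\bigr)^k$ is false. Take $f=t_1t_2$, a single degree-$2$ monomial in $n$ independent $\mathrm{Ber}(p)$ variables. Then $\E[f^k]=\E[t_1t_2]=p^2$ for every $k\ge 1$, whereas $\max_L \E_L' f = p$ (attained at $L=\{1\}$; for $L=\{1,2\}$ the derivative is the constant $1$, whose non-constant part vanishes). Your inequality would force $p^2\le C(d,k)\,p^k$, impossible for $k\ge 3$ and small $p$. The phenomenon your overlap-pattern bookkeeping misses is that when the $k$ monomials in the expansion of $f^k$ heavily overlap (in the extreme, coincide), their contribution is of order $\E f$, not $(\E f)^k$; you cannot extract $k$ independent factors each bounded by $\max_L \E_L' f$. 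Vu's actual proofs proceed by induction on the degree, combined with martingale/Chernoff estimates, rather than by a raw $k$th-moment Markov bound of this shape.

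A secondary issue: your step ``shrink $\epsilon$ so that $\beta\gg n^\epsilon$'' is illegitimate, since shrinking $\epsilon$ makes the hypothesis $\max_L\E_L' f\le n^\epsilon$ \emph{stronger}, not weaker; you are not entitled to assume it for a smaller $\epsilon$. (Indeed the statement as printed, with $n^\epsilon$, is false: take $f=\sum_{i=1}^n t_i$ with $p=n^{\epsilon-1}$, so $\max_L\E_L' f=\E f=n^\epsilon$ yet $\Pr(f>\log n)\to 1$. The intended hypothesis, as in \cite{JKV}, is $\max_L\E_L' f\le n^{-\epsilon}$.) With the corrected hypothesis the $\beta$-versus-$n^\epsilon$ issue dissolves, but the false moment bound remains the obstruction.
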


\subsection{Martingale}\label{Martingale_Section}

The proof of the bound on the martingale follows exactly as that for the original paper. We have all the same bounds, namely that $|Z_i|<\varepsilon := \log^{-1} n$, and that $\sum_{i=1}^t \gamma_i = \Order (n\log n).$ and the proof makes no use of the graph setting for the problem. We include the steps below for reference.

We let $X_t=Z_1+\dots +Z_t$, and aim to show that
\[\Pr (X_t\geq n)<n^{-\omega(1)}.\]
We have that $Z_i$ is a function of the random sequence $e_1,\dots, e_i$, but that $\E(Z_i|e_1,\dots, e_{i-1})=0$ for any choice of the $e_j$'s. Using (\ref{bound_on_xi}), it will follow from the properties $\mathcal{R}$ and $\mathcal{B}$, that $|Z_i|<\varepsilon :=\log^{-1}n$. We can apply Markov's inequality to derive the following, for any positive $h$;
\begin{equation}\label{25BoundOnX>N}\Pr(X_t\geq n)=\Pr(e^{h(Z_1+\dots+Z_t)}\geq e^{hn})\leq\E(e^{h(Z_1+\dots+Z_t)})e^{-hn}.\end{equation}

Using $Z_i=\xi_i-\gamma_i$, we have that $\E(\xi_i|e_1,\dots, e_{i-1})=\gamma_i$. Using $0\leq \xi_i \leq \varepsilon$ and the convexity of $e^x$, we have
\[\E(e^{hZ_i}|e_1,\dots,e_{i-1})\leq e^{-h\gamma_i}\left(\left(1-\frac{\gamma_i}{\varepsilon}\right)+\frac{\gamma_i}{\epsilon}e^{h\varepsilon}\right).\]
Taylor series expansions, show that the right hand side is at most $e^{h^2\epsilon\gamma_i}$, for any $0\leq h\leq 1$. Using induction on $t$ we derive the following.

\[
\begin{array}{ll}
\E (e^{h(Z_1+\dots+Z_t)}) &=\E(\E (e^{h(Z_1+\dots+Z_t)}|e_1,\dots,e_{t-1}))
\\ &=\E( e^{h(Z_1+\dots+Z_{t-1})}\E(e^{hZ_t}|e_1,\dots,e_{t-1})) 
\\
&\leq\E( e^{h(Z_1+\dots+Z_{t-1})} e^{h^2\varepsilon\gamma_t})
\\
&\leq e^{h^2\varepsilon\sum_{i=1}^t \gamma_i}.
\end{array}
\]
Combined with (\ref{25BoundOnX>N}), we have
\[\Pr(X_t\geq n) \leq e^{h^2\varepsilon\sum_{i=1}^t \gamma_i -hn}.\]

We have that $\sum_{i=1}^t \gamma_i =\Order(n \log n)$ and $\varepsilon=\log^{-1} n$, and so setting $h$ to be a sufficiently small positive constant, leaves the right hand side as $e^{-\Omega(n)}=n^{-\omega(1)}$, as required.

\subsection{The Properties $\mathcal{B}$ and $\mathcal{R}$}\label{Section:PropertiesRAndB}
We now define our slightly altered properties $\mathcal{B}_i$ and $\mathcal{R}_i$. We note, as before that in proving (\ref{RdoesNotFail18}) and (\ref{AandRmeansB19}) we can operate in the random graph $\mathcal{H}(n,p_i)$ rather than $G_i$, where 
\[p_i=1-\frac{i}{\nEH (n/\nV)^2}.\]
We will define graph properties $\mathcal{B}$ and $\mathcal{R}(p)$ and then the event $\mathcal{B}_i$ will be $\{G_i$ satisfies $\mathcal{B}\}$ and $\mathcal{R}_i$ will be $\{G_i$ satisfies $\mathcal{R}(p_i)\}$.

In defining $\mathcal{B}$, we use the same notation for the functions $W$, namely that for a finite set $A$ and $W: A\rightarrow [0,\infty)$, set
\[\overline{W}(A)= |A|^{-1} \sum_{a\in A} W(a),\]
\[\max W(A) = \max_{a \in A} W(a),\]
and lastly that,
\[\mbox{maxr } W(A)=\overline{W}(A)^{-1} \max W(A),\]
with med $W(A)$ the median of $W$ on $A$.

For a multigraph $G$ with our required partition structure, and vertex set $V$, let $Z$ be a choice of $\nVH$ vertices from $V$, with each element taken from a different vertex partition. We then let $\weight_G(Z)=\Phi(G-Z)$. Therefore, $\weight_G(Z)$ is the number of \supGr -factors in the multigraph induced by $G$ on the vertex set $V\backslash Z$. 

It could also be thought of as the number of \supGr -factors in $G$, containing $Z$ as a copy of \supGr, if all edges between the vertices of $Z$ had been added in, where they are not already present. 

We also use $\weight_G(K)=\weight_G(V(K))$ for $K\in \supGr(G)$, the set of copies of \supGr\ in $G$ (which will contain one vertex from each partition of $G$).

We now define property $\mathcal{B}$, for a multigraph G as for the graph case, namely
\begin{equation}\label{PropertyBDefn}
\mathcal{B}(G)=\{\mbox{maxr } \weight_G(\supGr(G)))=\Order(1)\}.
\end{equation}
As in the graph case, $\mathcal{B}(G)$ states that no copy of \supGr\ in $G$ is contained in much more than the average number of \supGr -factors, for a copy of \supGr.

We define $\mathcal{R}(p)$, for the most part, in the same manner as for a graph, with two parts to the definition. For the first part, we use almost the same set-up. We have $G$, our random multigraph and $V$ its vertex set, and given $A\subseteq V(\supGr)$, $E'\subseteq E(\supGr)\backslash E(\supGr[A])$, an injection $\psi$, from $A$ to $V$ (mapping vertices to the correct partition of $V$ corresponding to their location in \supGr). We let $X(G)$ be the number of injections $\phi : V(\supGr) \rightarrow V$ with
\begin{equation}\label{equiv_maps}\phi \equiv \psi \mbox{ on A}\end{equation}
and
\[xy\in E' \Rightarrow \phi(x)\phi(y) \in E(G).\]

We can write $X(\supGNP)$ in an obvious way, as a polynomial in variables $t_e= {\bf{1}}_{\{e\in E(\supGNP)\}},$ for $e$, an edge in the complete form of our multigraph:
\[X(\supGNP)= q(t)=\sum_\phi t_{\phi(E')},\]
where $t$ is the indicator function for edges of the multigraph, and the sum is over all injections $\phi$ satisfying (\ref{equiv_maps}).

As in the original paper, we have that this function $q(t)$, is multilinear, $\Order(1)$-normal and homogeneous of degree $d=|E'|$. We use the same definition for $\E^*$; 
\begin{equation}\label{EStar}
\E^* = \max \{\E_L q: |L| <d\}.
\end{equation}

We also use the same definition for $D(p)$, using it as the expected  number of copies of \supGr\ in \supGNP\, using a given vertex $x\in V$, while $D(x,G)$ is the actual number of copies containing $x$ in $G$. It is clear that
\[D(p)=(n/\nV)^{\nVH-1}p^{\nEH}=\Theta(n^{\nVH-1}p^{\nEH}).\]
We are now ready to define $\mathcal{R}(p)$, which is identical to the graph formulation, not taking into account our slight changes to the above notation.
\begin{definition}\label{definitionOfR}
We say that a random multigraph $G$ satisfies $\mathcal{R}(p)$ if the following two conditions hold.
\begin{enumerate}
\item[(a)] For $A$, $E'$ and $\psi$ (and associated notation) as above: if $\E^*=n^{-\Omega(1)}$, then for any $\beta(n)=\omega(1)$,  $X(G)<\beta(n)$ for large enough $n$; if $E^* \geq n^{-o(1)}$, then for any fixed $\epsilon >0$ and large enough $n$,  $X(G) < n^\epsilon \E^*.$
\item[(b)] For each $x\in V$, $|D(x,G)-D(p)|=o(D(p))$
\end{enumerate}

\end{definition}

We can now prove that these conditions give the required bound on the size of $\xi_i$. The proof follows in the same fashion as in the original paper:

\begin{lemma}
For $i\leq T=\nEH (n/\nV)^2-M$ as defined in Section \ref{sectionOutline}, $\mathcal{B}_{i-1}$ and $\mathcal{R}_{i-1}$  (i.e. that $G_{i-1}$ satisfies $\mathcal{B}$ and $\mathcal{R}(p_i)$) imply (\ref{bound_on_xi})
\end{lemma}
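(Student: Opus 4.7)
The plan is to bound $\xi_i$ directly by a double-counting argument combining $\mathcal{B}_{i-1}$ and $\mathcal{R}_{i-1}$. A factor lies in $\mathcal{F}_{i-1}\setminus\mathcal{F}_i$ iff it uses the edge $e_i$, and in the partitioned multigraph setting each edge of a factor lies in exactly one of its $n/\nV$ copies of \supGr\ (the partition structure pins down the slot of \supGr\ that the parallel edge $e_i$ plays, so there is no ambiguity among parallel edges). Writing $W := \weight_{G_{i-1}}$ for brevity,
\[
\xi_i\,|\mathcal{F}_{i-1}| \;=\; \sum_{\substack{K \in \supGr(G_{i-1})\\ e_i \in E(K)}} W(V(K)).
\]

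By $\mathcal{B}_{i-1}$ every $W(V(K))$ with $K\in\supGr(G_{i-1})$ is at most $\Order(1)\,\overline{W}$, where $\overline{W}$ is the average of $W(V(K))$ over $K\in\supGr(G_{i-1})$. Double counting each factor against its $n/\nV$ copies of \supGr\ gives $\overline{W} = (n/\nV)\,|\mathcal{F}_{i-1}|/|\supGr(G_{i-1})|$, and hence
\[
\xi_i \;=\; \Order\!\left(\frac{n\,|\{K\in\supGr(G_{i-1}) : e_i\in E(K)\}|}{|\supGr(G_{i-1})|}\right).
\]
Now $\mathcal{R}_{i-1}(b)$ combined with $\sum_v D(v,G_{i-1}) = \nVH\,|\supGr(G_{i-1})|$ yields a denominator of $\Theta(n\,D(p_{i-1})) = \Theta(n^{\nVH}p_{i-1}^{\nEH})$, and $\mathcal{R}_{i-1}(a)$, applied with $A$ the two endpoints of $e_i$, $\psi$ mapping them to the vertices of \supGr\ joined by the slot of $e_i$, and $E'=E(\supGr)$ minus that slot, bounds the numerator by $n^{\epsilon}\,\Theta(n^{\nVH-2}p_{i-1}^{\nEH-1})$ in the large-expectation case (the small-expectation case already gives the stronger bound of any $\omega(1)$). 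Substituting and using $p_{i-1}\ge p$,
\[
\xi_i \;=\; \Order\!\left(\frac{n^{\epsilon}}{n\,p_{i-1}}\right) \;=\; o(\log^{-1}n),
\]
the last step holding because $np = \omega(\log n)$ under the standing hypothesis on $p$ (if necessary taking the $o(1)$ in the exponent of $p$ in Theorem \ref{MainTheoremPartionMultigraphResult} to decrease slowly enough, as already assumed in Section \ref{Section:RemainingResults}).

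The one genuinely new ingredient is the polynomial set-up behind the $\mathcal{R}_{i-1}(a)$ application. In the multigraph regime each parallel edge of the complete form $KM_n$ has to be regarded as carrying the label of its slot in \supGr, so that `extending $e_i$ to a \supGr-copy' amounts to counting injections into a specific rooted pattern with the $e_i$-slot pinned down rather than into a generic embedding; once this labelling is imposed, the associated polynomial is $\Order(1)$-normal and homogeneous of degree $\nEH - 1$, the ratios $\E_L / \E$ are controlled exactly as in \cite{JKV} by the absence of subgraphs of \supGr\ of density exceeding $m(\supGr)$, and the analysis of Section \ref{concentration} goes through unchanged. This bookkeeping is the only place where the multigraph generalisation does any real work; everything else reduces to the calculation just sketched.
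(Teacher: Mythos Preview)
Your argument is essentially the paper's own: both write $\xi_i\,|\mathcal{F}_{i-1}|$ as $\sum_{K\ni e_i} W(V(K))$, bound each summand by $O(\overline{W})$ via $\mathcal{B}_{i-1}$, compute $\overline{W}$ from the double count $\sum_K W(V(K))=(n/\nV)|\mathcal{F}_{i-1}|$ together with $\mathcal{R}_{i-1}(b)$ (the paper packages these three steps as the single line $W(K)/\Phi(G_{i-1})=O(1/D(p_{i-1}))$), and then invoke $\mathcal{R}_{i-1}(a)$ to bound the number of copies of $\supGr$ through $e_i$. At this last step the paper is actually \emph{less} explicit than you are: it simply asserts, deferring to \cite{JKV}, that the count is at most some $\beta(n)$ with $\beta^{-1}D(p_{i-1})=\omega(\log n)$, and does not write down the bound you give.

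Two small slips in your explicit version. First, the roles in the $\mathcal{R}(a)$ application are reversed: by definition $A\subseteq V(\supGr)$ and $\psi:A\to V(G)$, so $A$ should be the two endpoints in $\supGr$ of the slot of $e_i$ and $\psi$ should send them to the endpoints of $e_i$ in $G$, not the other way round. Second, your numerator bound $n^{\epsilon}\Theta(n^{\nVH-2}p_{i-1}^{\nEH-1})$ tacitly identifies $E^*$ with $\E q$, whereas $E^*=\max_{|L|<d}\E_L q$ can in principle exceed $\E q$; one still needs the no-dense-subgraph condition to control the other $\E_L q$'s. The paper sidesteps this by not computing $\beta$ at all. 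Neither point changes the overall approach, which matches the paper's.
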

\begin{proof}
Write $\weight$ for $\weight_{G_{i-1}}$. We aim to show that $\mathcal{B}_{i-1}$ and $\mathcal{R}_{i-1}$ imply that, for any $K \in \supGr(G_{i-1})$,
\[\weight(K)/\Phi(G_{i-1})=\Order(1/D(p_{i-1})).\]
As before, the left hand side of this equation is the fraction of \supGr -factors in $G_{i-1}$ that use $K$, and we prove this result in the same fashion, since we have;
\[\begin{array} {lcl} \Phi(G_{i-1}) & = & \frac{\nV}{n} \weight(\supGr(G_{i-1})) \\
& = & \frac{\nV}{n} \Omega (|\supGr(G_{i-1})| \max \weight(\supGr(G_{i-1})) \\
& = &\Omega(D(p_{i-1}) \weight(K)). \end{array}\]
The first line follows, since each \supGr -factor will be counted $n/\nV$ times by summing the $\weight$ function over all copies of \supGr. The second line follows from applying $\mathcal{B}$, while the third comes from part (b) of $\mathcal{R}_{i-1}$, and noting that $(\nVH n/\nV)D(p)=\nVH|\supGr(G_{i-1})|$.

We also have, using the same arguments as in the original, that part (a) of $\mathcal{R}_{i-1}$ implies that the number of $K \in\supGr(G_{i-1})$, containing a given edge $e\in E(G_{i-1})$ is at most  $\beta(n)$, satisfying $\beta^{-1} D(p_{i-1}) =\omega (\log n ).$ Here we also use that $D(p_{i-1})=n^{\nVH-1}p_{i-1}^{\nEH}=\omega(\log n)$, for $i\leq T$.
\end{proof}

Lastly we state the required `$p$-version' of $\mathcal{A}_t$:
\[\mathcal{A}(p)=\left\{\log |\mathcal{F}(G)| > \log |\mathcal{F}_0| - \sum_{i=1}^t \gamma_i -\Order(n)\right\},\]
where $t=\lceil(1-p)\nEH (n/\nV)^2\rceil$. Recall from the end of Theorem \ref{Number of factors}, that our required result will follow from the following lemmas
\begin{lemma}\label{RHappensLemma}
For $p>\omega(n^{-1/m(H)})$,
\[\Pr(\supGNP \mbox{ satisfies } \mathcal{R}(p)) = 1-n^{-\omega(1)}.\]
\end{lemma}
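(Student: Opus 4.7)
The plan is to verify conditions (a) and (b) of $\mathcal{R}(p)$ for each fixed choice of the underlying parameters and then take a union bound. For (a) the parameters are a triple $(A, E', \psi)$ with $A \subseteq V(\supGr)$, $E' \subseteq E(\supGr)\setminus E(\supGr[A])$, and $\psi$ an injection from $A$ into $V$, so there are at most $n^{\Order(1)}$ such choices; for (b) we range over the $n$ vertices $x \in V$. In each case the quantity of interest is a multilinear, $\Order(1)$-normal polynomial in the edge indicators $t_e$ of $\supGNP$, and the needed tail bound will follow from a result of Section \ref{concentration} whose hypotheses are checked via a density computation inside $\supGr$.

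For (a), fix $(A, E', \psi)$ and take $q(t) := X(\supGNP) = \sum_\phi t_{\phi(E')}$. This is homogeneous of degree $d = |E'|$, so $\max_L \E'_L q = \max_{|L| < d} \E_L q = \E^*$. When $\E^* = n^{-\Omega(1)}$, the hypothesis $\max_L \E'_L q \leq n^\epsilon$ of Theorem \ref{5.8Concentration} holds for any fixed $\epsilon > 0$, so $\Pr(q > \beta(n)) = n^{-\omega(1)}$ for every $\beta(n) = \omega(1)$. When instead $\E^* \geq n^{-o(1)}$, Corollary \ref{5.7Concentration} applied with its bound set to $n^{\epsilon/2} \E^*$ gives $\Pr(q > (1+\epsilon/2)\, n^{\epsilon/2}\E^*) = n^{-\omega(1)}$, using that $n^{\epsilon/2}\E^* = \omega(\log n)$ and $n^{\epsilon/2}\E^* \geq n^{\epsilon/2} \max_{L \neq \emptyset} \E'_L q$; rescaling $\epsilon$ yields the required $X(\supGNP) < n^\epsilon \E^*$.

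For (b), fix $x$ and write $D(x, \supGNP) = f(t) = \sum_U t_U$, where $U$ ranges over edge sets of copies of $\supGr$ in the complete blowup that contain $x$. This $f$ is multilinear, $\Order(1)$-normal, homogeneous of degree $\nEH$, with $\E f = D(p) = \Theta(n^{\nVH-1} p^{\nEH})$. A direct count shows that for any non-empty $L$ spanning a sub-multigraph $\supGr'' \subseteq \supGr$ on $v''$ vertices and $e'' = |L|$ edges,
\[
\E f / \E_L f = \Omega\bigl(n^{v''-1} p^{e''}\bigr).
\]
Since the collapsing construction leaves $m(\supGr) < m(H)$, one has $e''/(v''-1) \leq m(\supGr) < m(H)$; together with $p = \omega(n^{-1/m(H)})$ this gives $\E f / \E_L f \geq n^{\Omega(1)}$ for every such $L$, and in particular $\E f = \omega(\log n)$. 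Theorem \ref{5.4Concentration}, in the form allowing $\epsilon = \epsilon(n) \to 0$ sufficiently slowly (which is legitimate here because of the $n^{\Omega(1)}$ gap), then delivers $|D(x,\supGNP) - D(p)| = o(D(p))$ with probability $1 - n^{-\omega(1)}$.

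The core ingredient is the density inequality $e''/(v''-1) \leq m(\supGr) < m(H)$, which is precisely what the collapsing lemma arranged and which supplies the $n^{\Omega(1)}$ margin underpinning every concentration step. The main obstacle lies in verifying that the polynomial structure, normality, and density bounds all survive the transition to multigraphs and partitioned vertex sets; once the sums over injections are restricted to those respecting the parts (with parallel edges of $\supGr$ counted separately in the edge set of the complete blowup), the arguments transfer from \cite{JKV}, and a final union bound over the $n^{\Order(1)}$ parameter choices preserves the $n^{-\omega(1)}$ failure probability.
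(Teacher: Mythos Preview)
Your proposal is correct and follows essentially the same approach as the paper: union bound over $n^{\Order(1)}$ parameter choices, with part (a) handled by direct appeal to the polynomial concentration results and part (b) by the density computation $\E f/\E_L f = \Omega(n^{v''-1}p^{e''}) = n^{\Omega(1)}$ feeding into Theorem~\ref{5.4Concentration}. You are in fact more explicit than the paper in naming Theorem~\ref{5.8Concentration} and Corollary~\ref{5.7Concentration} for the two regimes of $\E^*$ in part (a), and in noting that the $o(1)$ in part (b) requires letting $\epsilon\to 0$ slowly; the paper simply asserts these steps.
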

\begin{lemma}\label{AAndRThenBLemma}
For $p>\omega(n^{-1/m(H)})$,
\[\Pr(\supGNP \mbox{ satisfies } \mathcal{A}(p)\mathcal{R}(p)\overline{\mathcal{B}}) = n^{-\omega(1)}.\]
\end{lemma}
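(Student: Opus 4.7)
The plan is to combine Theorem~\ref{TheoremBadSetsExist} with the concentration results of Section~\ref{concentration} via a two-step reduction. First I would show that $\mathcal{A}(p)\mathcal{R}(p)\overline{\mathcal{B}}$ forces $\overline{\mathcal{C}}$, so that Theorem~\ref{TheoremBadSetsExist} produces a bad configuration $(Y,x,y,J,J')$; then I would show that the very existence of such a configuration is itself an event of probability $n^{-\omega(1)}$. The desired bound on $\Pr(\mathcal{A}(p)\mathcal{R}(p)\overline{\mathcal{B}})$ then follows by chaining the two steps.

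For the first step, assume $\mathcal{B}$ fails and fix $K\in\supGr(\supGNP)$ with $w(K)=\omega(1)\cdot\overline{w}(\supGr(\supGNP))$. Under $\mathcal{R}(p)$ we have $|\supGr(\supGNP)|=(1+o(1))nD(p)/\nVH$, hence $\overline{w}(\supGr(\supGNP))=\Theta(\Phi(\supGNP)/D(p))$, which dominates $n^{-2(\nVH-1)}\Phi(\supGNP)$ since $D(p)\le n^{\nVH-1}$. Picking a vertex $z\in V(K)$ and setting $Y=V(K)\setminus\{z\}$, the extension $K=Y\cup\{z\}$ witnesses $\max w(\mathcal{V}_0(Y))\ge w(K)$, which already violates the first clause of $\mathcal{C}$. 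For the median clause, the counting bound $\sum_{z'}w(Y\cup\{z'\})\le\Phi(\supGNP)$ — each $\supGr$-factor extends $Y$ through at most one $z'$ — together with the concentration afforded by $\mathcal{R}(p)$ controls the tail of $w(\mathcal{V}_0(Y))$, so that for a judicious choice of $z$ one obtains $\mathrm{med}\,w(\mathcal{V}_0(Y))<w(K)/2$ and hence $\overline{\mathcal{C}}$.

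For the second step, let $(Y,x,y,J,J')$ be the configuration delivered by Theorem~\ref{TheoremBadSetsExist}. I would introduce the natural swap map $\sigma(K)=(V(K)\setminus\{y\})\cup\{x\}$, which produces a legitimate copy of $\supGr$ in $\supGNP-(Y\cup\{y\})$ precisely when the edges incident to $x$ required by that copy are present. The crucial claim is that, with probability $1-n^{-\omega(1)}$, for all but an $o(|J|)$ fraction of copies $K\in J$ the swap is valid and the ratio $\wxy(\sigma(K))/\wyx(K)$ lies in $[c,C]$ for absolute constants $0<c<C<\infty$. Granted the claim, $\sigma(J)$ lies in $\wxy^{-1}([ca,Cb])$, a constant-ratio interval into which $J'$ can be enlarged (the proof of Theorem~\ref{TheoremBadSetsExist} tolerates an $\Order(1)$ enlargement of $[a,b]$). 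Summing then yields $\sum_{J'}\wxy=\Omega(\sum_J\wyx)=\Omega(w(Y\cup\{x\}))$, contradicting the upper bound $\sum_{J'}\wxy\le 0.5\,w(Y\cup\{x\})$ once the constants are calibrated. A union bound over the $\Order(n^{\nVH+1})$ candidate triples $(Y,x,y)$ is absorbed by the super-polynomial decay.

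The main obstacle is the multiplicative stability claim $\wxy(\sigma(K))=\Theta(\wyx(K))$. Both quantities count $\supGr$-factors on multigraphs that differ only by the exchange of the two distinguished vertices together with their incident edges, and each is a polynomial in the edges of $\supGNP$ of precisely the form $q(t)=\sum_\phi t_{\phi(E')}$ analysed in Section~\ref{concentration}. I would verify that the two polynomials share expectation up to a constant factor and then invoke Theorem~\ref{5.6Concentration} (with Corollary~\ref{5.7Concentration} to truncate rare high-weight exceptions) to obtain concentration of both sides around their means with failure probability $n^{-\omega(1)}$. The required hypotheses on $\E^*$ and $\E_L'$ are available because the collapsing construction guarantees that $\supGr$ has no subgraph of density $m(H)$ while $p=\omega(n^{-1/m(H)})$, so each ratio $\E f/\E_L f$ is $n^{\Omega(1)}$, exactly as in the simple-graph proof of \cite{JKV}. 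The technical bookkeeping is substantial — one must track the multigraph edge multiplicities in the polynomial coefficients — but conceptually the argument is a direct adaptation.
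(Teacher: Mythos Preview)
Both steps of your plan contain genuine gaps.

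\textbf{Step 1.} The deterministic implication $\mathcal{A}\mathcal{R}\overline{\mathcal{B}}\Rightarrow\overline{\mathcal{C}}$ is not established. Your counting bound $\sum_{z'}\weight(Y\cup\{z'\})\le\Phi(\supGNP)$ is simply false: $\weight(Y\cup\{z'\})=\Phi(\supGNP\setminus(Y\cup\{z'\}))$ counts factors of a \emph{different} graph for each $z'$, and nothing forces these to sum to at most $\Phi(\supGNP)$. Even if one had such a bound, it would control the average of $\weight$ over $\mathcal{V}_0(Y)$, not its median, and $\mathcal{R}(p)$ gives no information about the distribution of $\weight$. The paper does not attempt this implication at all. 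It instead writes $\mathcal{A}\mathcal{R}\overline{\mathcal{B}}\subseteq(\mathcal{A}\mathcal{R}\overline{\mathcal{C}})\cup(\mathcal{R}\mathcal{C}\overline{\mathcal{B}})$ and proves separately (Lemma~\ref{RCThenBLemma10.2}) that $\Pr(\mathcal{R}\mathcal{C}\overline{\mathcal{B}})=n^{-\omega(1)}$. That lemma is substantial: from $\mathcal{C}$ one first deduces, via an inductive ``good set'' lemma, that $\Omega(n^{\nVH})$ sets $Z\in\mathcal{V}_0$ have $\weight(Z)$ within a constant of $\max\weight(\mathcal{V}_0)$; a second inductive concentration argument (the events $\mathcal{Y}(x_1,\dots,x_r)$) is then needed to show that $\Omega(p^{\nEH}n^{\nVH})$ of these $Z$ span actual copies of $\supGr$ in $G$, which is what yields $\mathrm{maxr}\,\weight(\supGr(G))=\Order(1)$. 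Your Step~1 tries to bypass this entire argument and cannot.

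\textbf{Step 2.} The swap argument fails quantitatively. For a fixed $K\in J$, the set $\sigma(K)=(V(K)\setminus\{y\})\cup\{x\}$ spans a copy of $\supGr$ only if all edges from $x$ to $V(K)\setminus\{y\}$ prescribed by $\supGr$ are present, an event of probability $p^{d}=o(1)$. Hence the fraction of $K\in J$ for which the swap is valid is $o(1)$, not $1-o(1)$ as you assert. (When the swap \emph{is} valid one has $\wxy(\sigma(K))=\wyx(K)$ exactly, since $V(\sigma(K))\cup Y\cup\{y\}=V(K)\cup Y\cup\{x\}$; so the ``ratio in $[c,C]$'' claim is vacuous.) Your fallback of concentrating $\wyx(K)$ and $\wxy(\sigma(K))$ individually via Theorem~\ref{5.6Concentration} is also blocked: as polynomials in the edge indicators of $\supGNP$ these have degree $\Theta(n)$, and the results of Section~\ref{concentration} require fixed degree $d$. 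The paper's route is different and essential: condition on $G[W]$ with $W=V\setminus(Y\cup\{x,y\})$, and observe that $\wyx(J)$ and $\wxy(J')$ are the values $g(t')$, $g(t'')$ of a \emph{single} polynomial $g$ of degree at most $\nVH-1$, evaluated at the two independent $\mathrm{Bin}(W,p)$ vectors encoding the edges from $y$ and from $x$ into $W$. Concentration of $g$ then yields $g(t')\approx g(t'')$ with probability $1-n^{-\omega(1)}$, contradicting $g(t'')<0.8\,g(t')$.
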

We prove these lemmas in the next subsections.
\subsection{Regularity}
We now prove Lemma \ref{RHappensLemma}, i.e. that with probability $1-n^{-\omega(1)}$, $G=\supGNP$ satisfies both parts of the definition of $\mathcal{R}(p)$.

Part (a) follows easily in the same fashion as the original paper. There are only $n^{\Order(1)}$ choice for each of $A,\ E'$ and $\psi$, so we simply show that the probability of one of these violating (a) is $n^{-\omega(1)}$. As we noted earlier, we can express the random variable as a polynomial, and since it is homogeneous, multilinear, and $\Order(1)$-normal, we can apply the probability results from the concentration chapter of \cite{JKV} directly, which gives us the result as required.

Proving (b) requires only slightly more adaptation to our scenario. We again express $D(x,G)$ as a polynomial of degree $\nEH=|E(\supGr)|$, in the variables $t_e=\bf{1}_{e\in E(G)}$, where $e$ belongs to the complete form of our random multigraph. Therefore we have,
\[D(x,G)=f(t) :=\sum \{t_K: K\in \supGr_0(x)\},\]
where $\supGr_0(x)$ is the set of copies of of \supGr, containing $x$, in the complete multigraph, and $t_K=\prod_{e\in K} t_e$.

As noted earlier, we have,
\[\E f=(n/\nV)^{\nVH-1}p^{\nEH}=\Theta(n^{\nVH-1}p^{\nEH})=\omega(\log n).\]
We aim to use one of the concentration results from \cite{JKV}, namely Theorem \ref{5.4Concentration}, which requires the above and that $\max_{1\leq j\leq d-1}\E_j f \leq n^{-\epsilon} \E f$. For $L$ a subset of the edges of the complete multigraph, with $1\leq |L|=l<\nEH$, we have
\[\E_L f = p^{\nEH-l} N(L),\]
where $N(L)$ is the number of $K\in \supGr_0(x)$ with $L\subseteq E(K)$. Let $I=V(L) \cup \{x\}$, $V(L)$ being the set of vertices incident to the edges of $L$, and $\nVH'=|I|$. Then $N(L)=\Theta(n^{\nVH-\nVH'})$ if the graph $\supGr ':=(I,L)$ is isomorphic to a subgraph of \supGr, and zero otherwise. Therefore we have, recalling that $p>n^{-1/m(H)}$ and $d(H)=\nE/(\nV-1)$,
\[\begin{array}{lcl} \E f /\E_L f & = & \Omega (n^{\nVH'-1}p^l)=\Omega (n^{[(\nVH'-1)/l-1/m(H)]l}) \\
& = & \Omega (n^{[1/d(\supGr')-1/m(H)]l})=n^{\Omega(1)}
\end{array}.\]
Using the fact that \supGr\ contains no subgraphs of density $m(H)$ (or denser). We therefore have the required conditions to use the concentration theorem and the result follows, which provides us with part (b) of $\mathcal{R}$ as required.

\subsection{Proof of Lemma \ref{AAndRThenBLemma}}
We now begin the proof of Lemma \ref{AAndRThenBLemma}, continuing in the same vein as the original paper, as there, we will prove that $\mathcal{B}$ is satisfied, using an auxiliary event, $\mathcal{C}$. Most of these results follow in an identical manner to the original, but with small conditions on the choice of sets, and differing constant powers in the equations (largely from use of $\nVH$ rather than $\nV$). We include these modified results for completeness.

We write $\mathcal{V}_0$ to be the collection of $\nVH$-sets of $V=V(\supGNP)$, with a single vertex from each partition of \supGNP. For a set $Y\subseteq V$, with $|Y|\leq\ \nVH$ and at most one vertex from each partition, we write $\mathcal{V}_0(Y)$ for the set $\{Z\in \mathcal{V}_0: Z \supseteq Y\}$. We then extend our earlier weight function $\weight=\weight_{\supGNP}$ to these sets $Y$, by setting
\[\weight(Y)=\sum\{\weight(Z): Z\in \mathcal{V}_0(Y)\}.\]
We define our new property $\mathcal{C}$ for \supGNP\ as follows: \supGNP\ satisfies $\mathcal{C}$ if for all such $Y$, as defined above, with $|Y|=\nVH-1$, 
\[\max \weight(\mathcal{V}_0(Y))\leq \max\{n^{-2(\nVH-1)}\Phi(\supGNP), 2\mbox{med } \weight(\mathcal{V}_0(Y)\}\]

We will then prove Lemma \ref{AAndRThenBLemma}, by proving the two following results.

\begin{lemma}\label{AAndRThenCLemma10.1}
$\Pr(\mathcal{AR\overline{C}})=n^{-\omega (1)}.$
\end{lemma}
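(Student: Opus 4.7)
The plan is to combine Theorem \ref{TheoremBadSetsExist} with a weight-preserving ``swap'' construction and the concentration machinery of Section \ref{concentration}, then union-bound over choices of witness vertices.

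First, since $\mathcal{R}$ refines $\mathcal{R}_b$, the event $\mathcal{AR\overline{C}}$ supplies via Theorem \ref{TheoremBadSetsExist} a triple $(Y,x,y)$ and sets $J\subseteq\supGr(y,\supGNP-(Y\cup\{x\}))$ and $J'\subseteq\supGr(x,\supGNP-(Y\cup\{y\}))$ on which $\wyx$ and $\wxy$ take values in a common window $[a,b]$ with $b<\Order(a)$, with $|J|=\Omega(|\supGr(y,\supGNP-(Y\cup\{x\}))|)$, $\sum_{K\in J}\wyx(K)>0.7\,\weight(Y\cup\{x\})$, and $\sum_{K'\in J'}\wxy(K')\leq 0.5\,\weight(Y\cup\{x\})$.

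The key is a swap identity. For $K\in\supGr(y,\supGNP-(Y\cup\{x\}))$, set $K^\star:=(V(K)\setminus\{y\})\cup\{x\}$. Because $K\cup Y\cup\{x\}$ and $K^\star\cup Y\cup\{y\}$ coincide as vertex sets, the induced sub-multigraphs on their complements are identical, so $\wyx(K)=\wxy(K^\star)$ whenever $K^\star$ is itself a valid copy of \supGr\ in $\supGNP-(Y\cup\{y\})$. The map $K\mapsto K^\star$ is injective, so every such $K\in J$ yields a distinct element of $J'$ carrying the same weight, which lies in $[a,b]$ and so qualifies $K^\star$ for membership in $J'$. Letting $J^{\text{bad}}\subseteq J$ be those $K$ for which $K^\star$ is \emph{not} a valid copy of \supGr, the upper bound on $\sum_{J'}\wxy$ forces $\sum_{K\in J^{\text{bad}}}\wyx(K)>0.2\,\weight(Y\cup\{x\})$; since $\wyx$ on $J$ lies in $[a,b]$ with $b=\Order(a)$, this amounts to $|J^{\text{bad}}|=\Omega(|J|)$.

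Next I would bound the probability that $|J^{\text{bad}}|=\Omega(|J|)$ for any fixed witness $(Y,x,y)$. The crucial observation is a conditional independence: both the set $J$ and the weights $\wyx$ depend only on edges of \supGNP\ avoiding the vertex $x$, whereas the validity of $K^\star$ depends only on the (independent) edges incident to $x$. Conditioning on the former, $|J^{\text{bad}}|$ becomes a multilinear polynomial $g$ in the edges at $x$, whose conditional expectation is a fraction of $|J|$ strictly bounded away from $1$: namely the probability that at least one of the $\deg_\supGr(y)$ required $x$-edges is absent, which is $\Theta(1)$ thanks to $p=\omega(n^{-1/m(\supGr)+o(1)})$ together with the bounded degrees in \supGr. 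Applying Theorem \ref{5.4Concentration} or Theorem \ref{5.6Concentration} to $g$, with partial-derivative expectations controlled using the regularity asserted by $\mathcal{R}$, then yields the required concentration and rules out the inflated $|J^{\text{bad}}|$ with probability $1-n^{-\omega(1)}$. A union bound over the $n^{\Order(1)}$ choices of $(Y,x,y)$ delivers $\Pr(\mathcal{AR\overline{C}})=n^{-\omega(1)}$. The hard part will be the bookkeeping in the concentration step: verifying that the partial-derivative expectations of $g$ meet the hypotheses of the Section \ref{concentration} theorems. This reduces to understanding how many $K\in J$ share each prescribed $x$-edge requirement, a typical-structure statement about $J$ of the kind $\mathcal{R}$ is designed to provide but that has to be unpacked carefully in the conditional setting.
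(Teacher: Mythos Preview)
Your swap construction is clever, and the identity $\wyx(K)=\wxy(K^\star)$ is indeed correct, but the concentration step collapses. For $K^\star$ to be a copy of $\supGr$ you need every one of the $\deg_\supGr(y)$ edges from $x$ to the neighbours of $y$ in $K$ to be present, an event of probability $p^{\deg_\supGr(y)}=o(1)$ (we are in the regime $p=o(1)$). Hence the conditional expectation of $|J^{\text{bad}}|$ is $(1-p^{\deg_\supGr(y)})|J|=(1-o(1))|J|$, not a fraction bounded away from $1$. Concentration therefore \emph{confirms} that $|J^{\text{bad}}|=\Omega(|J|)$ typically, rather than ruling it out; the argument proves the opposite of what you need.

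The paper avoids this trap by not trying to map $J$ into $J'$ at all. Instead it observes that, once one conditions on $G[W]$ with $W=V\setminus(Y\cup\{x,y\})$, both $\wyx(J)$ and $\wxy(J')$ are the \emph{same} multilinear polynomial $g$ evaluated at two different points: $g(t')=\wyx(J)$ with $t'=\mathbf{1}_{\{u:yu\in E\}}$, and $g(t'')=\wxy(J')$ with $t''=\mathbf{1}_{\{u:xu\in E\}}$. The coefficients $\alpha_U$ of $g$ encode the weight data intrinsically (via a dummy vertex $w^*$ with neighbourhood $U$), so neither $x$ nor $y$ appears in the definition of $g$. Since $t'$ and $t''$ are i.i.d.\ $\mathrm{Bin}(W,p)$ given $G[W]$, concentration of $g$ (Theorem~\ref{5.6Concentration} / Corollary~\ref{5.7Concentration}, with the partial-derivative bounds supplied by $\mathcal{R}$) forces $g(t')\approx\E g\approx g(t'')$ with probability $1-n^{-\omega(1)}$, contradicting $g(t'')<0.8\,g(t')$. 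A union bound over the polynomially many choices of $Y,x,y,a,b$ finishes. The symmetry between $x$ and $y$ is thus exploited at the level of the polynomial, not through an explicit vertex swap.
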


\begin{lemma}\label{RCThenBLemma10.2}
$\Pr(\mathcal{RC\overline{B}})=n^{-\omega (1)}.$
\end{lemma}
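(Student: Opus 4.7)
The plan is to adapt the argument of the analogous Lemma~10.2 in \cite{JKV} to the multigraph and partitioned setting. The argument will be largely deterministic once $\mathcal{R}$ and $\mathcal{C}$ are assumed to hold, with the $n^{-\omega(1)}$ exception probability coming from the concentration estimates of Section~\ref{concentration}.

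My first step would be to translate $\mathcal{B}$ into a concrete pointwise bound. The counting identity $\sum_{K\in\supGr(G)}\weight(K)=(n/\nV)\Phi(G)$, combined with $\mathcal{R}(b)$ (which gives $|\supGr(G)|=\Theta((n/\nV)D(p))$), yields $\overline{\weight}(\supGr(G))=\Theta(\Phi(G)/D(p))$. Thus $\mathcal{B}$ is equivalent to showing that every copy $K\in\supGr(G)$ satisfies $\weight(K)=\Order(\Phi(G)/D(p))$.

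For such a $K$ with $V(K)=\{v_1,\dots,v_{\nVH}\}$, set $Y=V(K)\setminus\{v_{\nVH}\}$. Since $V(K)\in\mathcal{V}_0(Y)$, property $\mathcal{C}$ implies either $\weight(K)\leq n^{-2(\nVH-1)}\Phi(G)$ --- which is already sufficient, as $\Phi(G)/D(p)=\Theta(\Phi(G)n^{-(\nVH-1)}p^{-\nEH})$ is strictly larger --- or $\weight(K)\leq 2\,\mathrm{med}\,\weight(\mathcal{V}_0(Y))$. In the latter case, since $|\mathcal{V}_0(Y)|=n/\nV$ and $\sum \weight(\mathcal{V}_0(Y))=\weight(Y)$, Markov's inequality gives $\mathrm{med}\leq 2\nV\weight(Y)/n$, hence $\weight(K)\leq 4\nV\weight(Y)/n$.

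The main obstacle is to bound $\weight(Y)=\sum_{v'\in V_{\nVH}}\Phi(G-(Y\cup\{v'\}))$ by the required quantity $\Order(\Phi(G)n/(\nV D(p)))$. The naive pointwise estimate $\Phi(G-Z)\leq \Phi(G)$ (which holds whenever $Z$ supports a copy of $\supGr$ in $G$) only yields $\weight(Y)=\Order(\Phi(G)n/\nV)$, weaker than needed by a factor of $D(p)$. To close the gap, I would either (i) introduce a stronger variant of $\mathcal{C}$ applicable to sets of all sizes up to $\nVH-1$ and iterate the median-to-mean reduction, or (ii) give a direct concentration argument on a polynomial representation of $\weight(Y)$ in the edge-indicator variables of the complete multigraph, followed by a union bound over the $\Order(n^{\nVH-1})$ choices of $Y$. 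The critical structural input --- the same as in the proof of Lemma~\ref{RHappensLemma} --- is that $\supGr$, by construction, contains no subgraphs of density $m(H)$, which ensures every partial-derivative expectation $\E_L$ is polynomially smaller than the full expectation, so Theorems~\ref{5.6Concentration}--\ref{5.8Concentration} apply and yield the needed $n^{-\omega(1)}$ failure probability.
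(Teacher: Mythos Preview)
Your reduction is correct up to the point where you need to bound $\weight(Y)$, but both proposed fixes fail, and the underlying strategy is inverted relative to what actually works.

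Option (ii) cannot succeed as written: $\weight(Y)=\sum_{v'}\Phi(G-(Y\cup\{v'\}))$ is a polynomial in the edge indicators of degree $(n/\nV-1)\nEH=\Theta(n)$, whereas every concentration result in Section~\ref{concentration} requires fixed degree $d$. Option (i) also does not close the gap. Iterating a median-to-mean inequality down from $|Y|=\nVH-1$ to $|Y|=0$ bounds $\max\weight(\mathcal{V}_0)$ by a constant times the average of $\weight(Z)$ over \emph{all} $Z\in\mathcal{V}_0$. But the counting identity $\sum_{K\in\supGr(G)}\weight(K)=(n/\nV)\Phi(G)$ is a sum over \emph{copies of $\supGr$ in $G$}, not over $\mathcal{V}_0$; the unrestricted sum $\sum_{Z\in\mathcal{V}_0}\weight(Z)$ counts all near-factors (collections of $n/\nV-1$ disjoint copies), and there is no a~priori bound relating this to $\Phi(G)/D(p)$. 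Sets $Z$ that support no copy of $\supGr$ can have $\weight(Z)=\Phi(G-Z)$ arbitrarily large compared with $\Phi(G)$.

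The paper's argument works in the opposite direction: rather than upper-bounding $\max\weight$, it lower-bounds $\overline{\weight}(\supGr(G))$. First, $\mathcal{C}$ together with an iterated combinatorial lemma shows that $\Omega(n^{\nVH})$ sets $Z\in\mathcal{V}_0$ satisfy $\weight(Z)\geq \delta\max\weight(\mathcal{V}_0)$; call this collection $\mathcal{Z}$. The crucial step you are missing is then to show that $\Omega(p^{\nEH}n^{\nVH})$ \emph{copies} $K\in\supGr(G)$ have $V(K)\in\mathcal{Z}$. This is done by an induction exposing one vertex at a time, at each stage applying Theorem~\ref{5.4Concentration} to a fixed-degree polynomial counting extensions whose remaining vertices land in $\mathcal{Z}$; property $\mathcal{R}$ controls the partial derivatives. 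Once this is established, the identity $(n/\nV)\Phi(G)=\sum_{K\in\supGr(G)}\weight(K)\geq \Omega(p^{\nEH}n^{\nVH})\cdot\delta\max\weight$ gives $\mathcal{B}$ immediately.
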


We have already demonstrated the first part of the proof of Lemma \ref{AAndRThenCLemma10.1} in Section \ref{the_heart_of_the_matter}. For clarity, to follow the arguments of the original paper, we will first prove Lemma \ref{RCThenBLemma10.2}, before returning to Lemma \ref{AAndRThenCLemma10.1} in the next Section. We must firstly show that
\begin{equation}|\{K\in \mathcal{V}_0: \weight(K)\geq \delta \max \weight(\mathcal{V}_0)\}|=\Omega(|\mathcal{V}_0|)\end{equation}
Noting that $|\mathcal{V}_0|=\Omega(n^{\nVH})$, and that this implies $\max r \weight(\mathcal{V}_0)=\Order(1)$. We now need another small modification of a lemma from the graph case. We let $\psi(X)=\max \weight(\mathcal{V}_0(X)$ and let $B$ be a positive number, recall that $V$ is the vertex set of \supGNP, and is of size $\nVH n/\nV$, with $\nVH$ partitions of size $n/\nV$.

\begin{lemma}
Suppose that for each $Y\subseteq V$, satisfying $|Y|=\nVH-1$ and with at most one vertex from each partition of $V$, and $\psi(Y)\geq B$ we have
\[\left|\left\{Z\in \mathcal{V}_0(Y): \weight(Z)\geq \frac{1}{2}\psi(Y)\right\}\right|\geq \frac{n/\nV -\nVH}{2}.\]
Then for any $X \subseteq V$ with $|X|=\nVH-i$, at most one vertex from each partition and $\psi(X)\geq 2^{i-1} B$, we have
\begin{equation}\label{consequenceOfLemma36}
\left|\left\{Z\in \mathcal{V}_0(X): \weight(X)\geq \frac{1}{2^i}\psi(X)\right\}\right|\geq \left(\frac{n/\nV -\nVH}{2}\right)^i \frac{1}{(i-1)!}.\end{equation}
\end{lemma}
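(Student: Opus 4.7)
The plan is induction on $i$. The base case $i=1$ is exactly the hypothesis of the lemma, since $\left(\frac{n/\nV-\nVH}{2}\right)^{1}/0!=\frac{n/\nV-\nVH}{2}$ and $2^{0}B=B$. For the inductive step I will use a ``peel one coordinate'' strategy: start from a $\weight$-maximiser $Z^{*}\in\mathcal{V}_0(X)$, delete one of its vertices to form a set of size $\nVH-1$ to which the lemma hypothesis directly applies, and then feed each resulting good one-vertex extension of $X$ back into the inductive hypothesis at index $i-1$.

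In more detail, I fix $X$ with $|X|=\nVH-i$ and $\psi(X)\geq 2^{i-1}B$, and choose $Z^{*}\in\mathcal{V}_0(X)$ with $\weight(Z^{*})=\psi(X)$. Since each element of $\mathcal{V}_0$ has exactly one vertex from each partition, $Z^{*}\setminus X$ meets each of the $i$ partitions unused by $X$ in exactly one vertex. I pick one such partition $P$, let $v^{*}$ be the vertex of $Z^{*}$ in $P$, and set $Y:=Z^{*}\setminus\{v^{*}\}$. Then $|Y|=\nVH-1$, $Y$ contains at most one vertex per partition, and $Z^{*}\in\mathcal{V}_0(Y)$ gives $\psi(Y)\geq \weight(Z^{*})=\psi(X)\geq B$, so the hypothesis of the lemma applies to $Y$ and yields a set $U\subseteq P$ with $|U|\geq (n/\nV-\nVH)/2$ such that $\weight(Y\cup\{u\})\geq \psi(Y)/2\geq \psi(X)/2$ for every $u\in U$.

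For each $u\in U$ I set $X_{u}:=X\cup\{u\}$; this has size $\nVH-(i-1)$ and at most one vertex per partition. Since $Y\cup\{u\}\in\mathcal{V}_0(X_{u})$,
\[\psi(X_{u})\;\geq\;\weight(Y\cup\{u\})\;\geq\;\psi(X)/2\;\geq\;2^{i-2}B\;=\;2^{(i-1)-1}B,\]
so the inductive hypothesis at index $i-1$ applies to $X_{u}$ and supplies at least $\left(\frac{n/\nV-\nVH}{2}\right)^{i-1}/(i-2)!$ sets $Z\in\mathcal{V}_0(X_{u})$ with $\weight(Z)\geq \psi(X_{u})/2^{i-1}\geq \psi(X)/2^{i}$. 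Every such $Z$ contains $X_{u}$, so its unique vertex in partition $P$ is $u$, and hence the families produced by distinct $u\in U$ are pairwise disjoint. Summing over $u\in U$ gives
\[\left|\left\{Z\in\mathcal{V}_0(X):\weight(Z)\geq \psi(X)/2^{i}\right\}\right|\;\geq\;\frac{n/\nV-\nVH}{2}\cdot\left(\frac{n/\nV-\nVH}{2}\right)^{i-1}\frac{1}{(i-2)!}\;\geq\;\left(\frac{n/\nV-\nVH}{2}\right)^{i}\frac{1}{(i-1)!},\]
using the trivial inequality $1/(i-2)!\geq 1/(i-1)!$ to match the stated form.

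The main difficulty is bookkeeping rather than anything conceptual: verifying that $Y$ really satisfies the combinatorial conditions in the lemma's hypothesis (size exactly $\nVH-1$ and at most one vertex per partition), checking that the lower bound $\psi(X_{u})\geq 2^{(i-1)-1}B$ matches the threshold required to invoke the inductive hypothesis, and confirming that distinct $u\in U$ really generate disjoint families of extensions so that multiplying by $|U|$ introduces no overcounting. Once these three points are in place the induction runs cleanly, and in fact yields a factor $1/(i-2)!$ which is comfortably stronger than the $1/(i-1)!$ claimed.
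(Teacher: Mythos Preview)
Your proof is correct. Both you and the paper argue by induction on $i$, but the orders in which the two ingredients are applied differ, and this makes your argument slightly cleaner.

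The paper first \emph{adds} a vertex: it picks a maximiser $Z\in\mathcal{V}_0(X)$, chooses $y\in Z\setminus X$, sets $Y=X\cup\{y\}$ (of size $\nVH-(i-1)$), and applies the inductive hypothesis to $Y$ to obtain many $Z'\in\mathcal{V}_0(Y)$ with large weight. For each such $Z'$ it then removes $y$ to get a $(\nVH-1)$-set and applies the lemma's hypothesis, producing many final sets $Z''$. It then counts pairs $(Z',Z'')$ and observes that a fixed $Z''$ can arise from at most $i-1$ different $Z'$, which is where the $1/(i-1)!$ comes from.

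You instead first \emph{remove} a vertex from the maximiser $Z^{*}$ to get a $(\nVH-1)$-set $Y$, apply the lemma's hypothesis directly to obtain many vertices $u$ in the missing partition $P$, and only then apply the inductive hypothesis to each $X_u=X\cup\{u\}$. Because all extensions produced from $X_u$ contain $u$ as their unique vertex in $P$, the families for distinct $u$ are automatically disjoint, so there is no overcounting to control. This is why you end up with the stronger factor $1/(i-2)!$ and have to weaken it to $1/(i-1)!$ to match the statement. Your bookkeeping (that $Y$ has size $\nVH-1$ with one vertex per partition, that $\psi(Y)=\psi(X)\geq B$, that $\psi(X_u)\geq 2^{(i-1)-1}B$, and the disjointness claim) is all correct.

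In short: same induction, but you swap the order of ``apply hypothesis'' and ``apply inductive step'', which buys you disjointness for free and avoids the overcounting argument the paper needs.
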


\begin{proof}
We write $N_i$ for the right hand side of the above equation, and proceed by induction on $i$. The case $i=1$ is the hypothesis of the lemma. We then assume $X$ as stated, and choose $Z\in \mathcal{V}_0(X)$ with $\weight(Z)=\psi(X)$ (i.e.~$Z$ such that $\weight$ is maximal). We let $y\in Z\backslash X$ and $Y=X\cup \{y\}$. We then have that $|Y|=\nVH-(i-1)$ and that $\psi(Y)=\psi(X)\geq 2^{i-1} B (\geq 2^{i-2}B)$, and so, by the inductive hypothesis there are at least $N_{i-1}$ sets $Z' \in \mathcal{V}_0(Y)$ with $\weight(Z')\geq 2^{-(i-1)}\psi(Y)$. For each such $Z',\ Z'\backslash\{y\}$ is a $(\nVH-1)$-subset of $V$ with $\psi(Z'\backslash\{y\})\geq \weight(Z')\geq B$. So then for each such $Z'$, there are at least $(n/\nV -\nVH)/2$ sets $Z''\in \mathcal{V}_0(Z'\backslash\{y\})$ with
\[\weight(Z'')\geq \psi(Z'\backslash \{y\})/2\geq 2^{-i}\psi(X).\]
Therefore the number of such pairs, $(Z,Z'')$ is at least $N_{i-1}(n/\nV-\nVH)/2$. Equally, for each $Z''$, each corresponding $Z'$ is $Z'' \backslash \{u\} \cup \{y\}$ for some $u\in Z''\backslash (X\cup\{y\})$. Therefore the number of such $Z'$ is at most $i-1$, providing our factorial term. This completes the proof.

\end{proof}

We now continue the proof of Lemma \ref{RCThenBLemma10.2}. We set $\delta = 2^{\nVH}$ and then $\mathcal{C}$ implies the hypothesis of the above lemma, with $B=(2n)^{-(\nVH-1)}\Phi(\supGNP) (>n^{-2(\nVH-1)}\Phi(\supGNP))$. We also clearly have that 
\[\psi(\emptyset)\geq n^{-(\nVH-1)}\Phi(\supGNP)=2^{\nVH-1}B.\]

We also set $\gamma = (2^{\nVH+1}(\nVH-1)!)^{-1}$, and using (\ref{consequenceOfLemma36}), we now have
\[|\{K\in\mathcal{V}_0:\weight(K)\geq \delta \max \weight(\mathcal{V}_0)\}| >\gamma n^{\nVH}.\]
We let $J$ be the largest power of 2, not exceeding $\max \weight$ and
\[\mathcal{Z}=\{Z\in \mathcal{V}_0: \weight(Z)>\delta J\}.\]
In a sense, $\mathcal{Z}$ can be thought of as the vertex sets of size $\nVH$, whose complement has a relatively large number of factors. For any set $X\subseteq V$ with $|X|\leq \nVH$, with at most a single vertex from each partition, let $\mathcal{Z}(X)=\{Z\in\mathcal{Z}: X\subset Z\}$, and say such a set $X$ is $good$ if $|\mathcal{Z}(X)|>\gamma n^{\nVH-|X|}.$ In particular we know that the empty set is $good$.
We then fix an ordering $a_1,\dots, a_{\nVH}$ of $V(\supGr)$. For distinct vertices, $x_1,\dots ,x_r \in \supGNP$, we define $S(x_1,\dots, x_r)$ to be the collection of copies $\phi$ of $\supGr$ in the complete multigraph $KM_n$, for which
\[\phi(a_i)=x_i \mbox{ for } i\in [r],\]
\[\phi(a_i)\phi(a_j)\in E(\supGNP)\mbox{ whenever } i,j\geq r \mbox{  and } a_i a_j\in E(\supGr)\]
and that $\phi(V(\supGr))\in\mathcal{Z}$.

For each $r\in \{0,\dots,\nVH\}$ let $N_r=N(a_r)\cap\{a_{r+1},\dots, a_{\nVH}\},$ and $d_r=|N_r|$, where $N(a_r)$, means the neighbourhood of $a_r$ (in \supGr). We now let $\mathcal{Y}(x_1,\dots,x_r)$ be the event
\[\{|S(x_1,\dots,x_r)|=\Omega(p^{d_r+\dots +d_{\nVH-1}}n^{\nVH-r})\}.\]
Note that in particular we have, $d_{\nVH}=0$, and, recalling that $\supGr(G)$, is the set of copies of \supGr\ in \supGNP,
\[S(\emptyset)=\{\phi\in\supGr(G): \weight(\phi(V(\supGr)))>\delta J\}\]
and that $\mathcal{Y}(\emptyset)$ is the event
\[\{|S(\emptyset)|=\Omega(p^{\nEH} n^{\nVH})\}.\]
Then, for $v_1,\dots ,x_r\in V$, and from distinct partitions, let $Q(x_1,\dots,x_r)$ be the event
\[\{\{x_1,\dots,x_r\} \mbox{ is good}\} \wedge \overline{\mathcal{Y}}(x_1,\dots,x_r)\}.\]
Since we have shown that $\mathcal{C}$ implies that the empty set is good, we have that $\overline{\mathcal{B}}\mathcal{C}\subseteq Q(\emptyset)$ and therefore, we simply require to show that 
\[\Pr(\mathcal{RQ}(\emptyset))=n^{-\omega(1)},\]
to prove Lemma \ref{RCThenBLemma10.2}, we continue (as always, in the same fashion as for the graph case), by proving a slightly more general argument for induction purposes, namely, that for any choice of $r$ and vertices $x_1,\dots,x_r,$
\begin{equation}\label{RandQisUnlikely38}
\Pr(\mathcal{RQ}(x_1,\dots,x_r))=n^{-\omega(1)}.
\end{equation}

Our induction is on $\nVH-r$, with our initial step $r=\nVH$, trivially following since the definition of being good for subsets of size $\nVH$ is to belong to $\mathcal{Z}$. For general $r<\nVH$, we set $X=\{x_1,\dots,x_r\}.$ and then let $\mathcal{P}$ be the event
\[\{y\in V\backslash X, X\cup\{y\} \mbox{ good } \Rightarrow \mathcal{Y}(x_1,\dots,x_r,y)\}.\]
By the inductive hypothesis we know that $\Pr(\mathcal{R\overline{P}})=n^{-\omega(1)}$, so we only need to show
\[\Pr(\mathcal{RP}Q(x_1,\dots,x_r))<n^{-\omega(1)}.\]
We also note that if $X$ is good then,
\begin{equation}\label{LotsOfGood40}
|y:X\cup\{y\} \mbox{ good}|=\Omega(n).
\end{equation}
To ensure that the edges between $x_r$ and $V\backslash X$ are independent of the initial conditioning, we use a relaxed form of $\mathcal{R}$, $\mathcal{R}_X$, which we say is satisfied if it satisfies part (a) of $\mathcal{R}$, whenever $A=\{a_1,\dots,a_r\},\ \psi (a_i)=x_i\ (i\in [r])$ and $E'\subseteq(H-A).$

As for the graph case, if $\mathcal{RP}\wedge\{X$ good$\}$ holds, but $\mathcal{Y}(x_1,\dots,x_r)$ does not, then there must be some $J=2^{\nVH}$, with $\nVH$ and integer not exceeding $n\log n$ (the magnitude of the $\log$ of the number of $\supFact$s in the complete multigraph), such that with $\mathcal{Z}$ good, we have the following, (noting that throughout this chapter, wherever we choose sets of vertices from $V-X$, we choose them from partitions that do not contain vertices of $X$),
\begin{enumerate}
\item[(a)] $\mathcal{R}_X$ holds;
\item[(b)] There are at least $\Omega(n)$ $y$'s in $V\backslash X$ for which we have $\mathcal{Y}(x_1,\dots,x_r,y)$ (by (\ref{LotsOfGood40})), and lastly,
\item[(c)] $\mathcal{Y}(x_1,\dots,x_r)$ does not hold.
\end{enumerate}

We note, that for a given $J$, the first two properties, depend only on $G':=G-X$. Since the number of possibilities for $J$ is at most $n\log n$, it is enough to show that for any $J$ and $G'$ satisfying (a) and (b) (with respect to $J$),
\[\Pr(\overline{\mathcal{Y}}(x_1,\dots,x_r)|G')=n^{-\omega(1)}.\]
Given this fixed $G'$, we can express $|S(x_1,\dots,x_r)|$ as a multilinear polynomial in terms of the indicator variable for edges between $x_r$ and other vertices of \supGNP, i.e.
\[t_u:={\bf{1}}_{\{x_r u\in E(\supGNP\}}\ u\in V\backslash X.\]
giving the polynomial,
\[|S(x_1,\dots,x_r)|=g(t):=\sum_U\alpha_U t_U,\]
where $U$ ranges over $d_r$ subsets of $V\backslash X$, with the vertices taken from the correct partitions, corresponding to the edges from $a_i=\phi^{-1}(x_i)$ in \supGr, and $\alpha_U$ is the number of copies $\psi$ of $K:=\supGr-\{a_1,\dots,a_r\}$ in $G'$ with the induced subgraph,
\[\psi(N_r)=U\]
and
\[\psi(\{a_{r+1},\dots,a_{\nVH}\})\cup X \in \mathcal{Z}.\]
We now apply a concentration result from Section \ref{concentration}, namely Theorem \ref{5.4Concentration}. To apply this, we require a normal polynomial, so we normalise, and consider
\[f(t)=\alpha^{-1}g(t),\]
where $\alpha$ is the maximum of the $\alpha_U$'s. The hypothesis requires that $\E f=\omega(\log n)$ and $\max_{1\leq j\leq d-1} \E_j f \leq n^{-\epsilon} \E f$, and will allow us to say that it is close to expectation. We rewrite:
\[g(t)=\sum_{y\in V\backslash X} \sum \{t_{\phi(N_r)}:\phi \in S(x_1,\dots,x_r,y)\}.\]
Since we know that the indicator variables, $t_u,\ u\in V\backslash X$, are independent of $G'$, which determines our sets $S(x_1,\dots,x_r,y)$, and using property (b) our situation, we have
\begin{equation}\label{43ExpectationOfG}
\E g = p^{d_r} \sum_{y\in V\backslash X} |S(x_1,\dots,x_r, y)|=\Omega(p^{d_r+\dots+d_{\nVH-1}}n^{\nVH-r}).
\end{equation}
Noting that if $d_r=0$, then there are no random edges to consider, and we have that $|S(x_1,\dots,x_r)|$ will equal 
\[\sum_{y\in V\backslash X} |S(x_1,\dots, x_r, y)|=\Omega(p^{d_r+\dots+d_{\nVH-1}} n^{\nVH-r}),\]
 as required, we will now assume that $d_r>0$.

We now set $\supGr'=\supGr-\{a_1,\dots, a_{r-1}\}$, and so $d_r+\dots+d_{\nVH-1}=e(\supGr')$ and that $\nVH-r=v(\supGr ')-1$. This gives us that the right hand side of the expectation of $g$, is $\Omega(p^{e(\supGr')}n^{v(\supGr')-1}).$ Using that $p>n^{-1/m(H)}$ and that \supGr\ contains no subgraphs of density $m(H)$, we have
\[ 
\E g= \left\{\begin{array}{ll} \omega(\log n) & \mbox{ if } r=1  \\
n^{\Omega(1)} & \mbox{ if } r>1.  
 \end{array}\right.
\]
We will now show that for the normalised polynomial $f(t)$, that
\begin{equation}\label{ExpectationF45}
\E f =\omega (\log n)
\end{equation}
and
\begin{equation}\label{maxExpectation46}
\max \{\E_T f: T\subseteq V \backslash X, 0<|T|<d_r\}=n^{-\Omega(1)}\E f.
\end{equation}
With these conditions, the concentration theorem will tell us that $f$, and therefore $g$ is close to its expectation, which implies $\mathcal{\overline{Y}}$, as required.
To prove the two conditions, we will find it easier to consider the partial derivatives of $g$ rather than $f$. We use $t_e=\textbf{1}_{e\in E(\supGNP)},\ t_S=\prod_{e\in S} t_e$ and $t=(t_e: e\in E(KM_{V\backslash X}))$, where $KM_{V\backslash X}$ is the multigraph induced by the `complete' multigraph $KM_n$ on the the vertex set $V\backslash X$.

Since we are only interested in establishing upper bounds on the partial derivatives of $g$, we may now disregard the second requirement on $\alpha_U$, namely $\psi(\{a_{r+1},\dots,a_{\nVH}\})\cup X \in \mathcal{Z}.$ Therefore we are left with
\[p^{-(d_r-l)}\E_T g \leq \sumOverPhiPolynomialNotH(t) :=\sum_{\phi} t_{\phi(E(K))},\]
where we sum over $\phi$, injections such that

\[\phi:V(K)\rightarrow V\backslash X \mbox{ with } \phi(N_r)\supseteq T.\]

We set $E^*$, as before to be $E^*=\max \{\E_L \sumOverPhiPolynomialNotH : L\subseteq E(KM_{V\backslash X}), |L| < |E(K)|\}.$ We will show that there is a positive constant $\varepsilon$ (depending only on \supGr), such that (for large enough $n$),
\begin{equation}\label{pLessThanEpsilon50}
p^{d_r-l} \E^* < n^{-\epsilon} \E g.
\end{equation}

This will give us the two requirements for our concentration theorem, as follows. To prove (\ref{ExpectationF45}), we need to show that
\[\alpha^{-1} \E g=\omega(\log n).\]
We apply (\ref{pLessThanEpsilon50}) with $T=U$, a $d_r$-subset of $V/X$. We consider the two possible conditions of $\mathcal{R}_X$, which gives two separate cases, firstly if $\E g \geq n^{\varepsilon/2},$ (i.e. $\E^*\geq n^{-o(1)})$ then $\mathcal{R}_X$ tells us that
\[\alpha_U=\E_U g<n^{\varepsilon/4} \max \{1,\E^*\} \leq n^{-\varepsilon/4} \E f.\]
In the other case we are left with $\E^* <n^{-\varepsilon/2}$, and hence $\E^* = n^{-\Omega(1)}$. We know that $\E g = \omega(\log n)$, and hence we choose our $\beta(n)=\omega(1)$ (from $\mathcal{R}$) such that $\beta(n)^{-1} \E g=\omega (\log n)$. Since this does not depend on our choice of $U$, we have (\ref{ExpectationF45}) as required.

To prove the other requirement, we need $\E_T g = n^{-\Omega(1)} \E g$ for any $T$, as we defined in (\ref{maxExpectation46}). We again apply (\ref{pLessThanEpsilon50}), noting that we can decrease the $\varepsilon$ without violating the equation, and use this observation to assume that $\varepsilon < 1/m(H)$. This gives us
\[\E_T g < p^{d_r -l} n^{\varepsilon/2} \max\{1, \E^*\}\leq n^{-\varepsilon/2} \E g\]
as we required.

We now return to prove (\ref{pLessThanEpsilon50}). We fix $L\subseteq E(KM_{V\backslash X})$ and let $h_l=|E(K)|-|L|,$ (recalling that $ K=\supGr-\{a_1,\dots,a_r\}).$ We know that $\E_L \sumOverPhiPolynomialNotH=p^{h_l}N_L$, where again $N_L$ is the number of $\phi$, as defined just before (\ref{pLessThanEpsilon50}), with $\phi(E(K))\supseteq L.$ Each $\phi$ satisfies $\phi(V(K))\supseteq I :=T\cup V(L)$, where as earlier, $V(L) \supseteq V\backslash X$ is the set of vertices incident to the edges of $L$. We let $I=\{i_1, \dots,i_s\}$, then we have $N_L=\sum N_L(b_1,\dots,b_s)$, where $(b_1\dots,b_s)$ range over $s$-tuples of distinct elements of $V(K)$ and we sum over the number of $\phi$'s as above, with $\phi(b_j)=i_j$ for each $1\leq j \leq s$. We only have $\Order(1)$ choices for the $b_j$'s and hence the result will follow if we can show that for any such choice,
\[p^{d_r-l+h_l} N_L(b_1,\dots,b_s)=n^{-\Omega(1)} \E g.\]
Given a fixed choice of $b_i$'s, let $\supGr ''=\supGr[\{a_r,b_1,\dots,b_s\}]$. We know that $N_L(b_1,\dots,b_s)<n^{\nVH-r-s}=n^{\nVH-r-(v(\supGr'')-1)}$, and that $h_l\geq l+d_{r+1}+\dots+d_{\nVH-1}-e(\supGr'')$, since $|E(K)|=d_{r+1}+\dots +d_{\nVH-1}$ and $E(\supGr'')$ contains $\phi^{-1}(L)$ and at least $l$ edges joining $a_r$ to $V(K)$. Therefore we have
\[p^{d_r-l-h_l}N_L(b_1,\dots,b_s)<p^{d_r+\dots +d_{\nVH-1}}n^{\nVH-r}[n^{v(\supGr'')-1}p^{e(H'')}]^{-1}.\]
Noting that since $p>n^{-1/m(H)}$ and that \supGr\ contains no subgraphs of density $m(H)$, we have that the expression in the square brackets is $n^{\Omega(1)}$. Combined with our earlier bound on $\E g$, from (\ref{43ExpectationOfG}), we have the bound required above and hence (\ref{pLessThanEpsilon50}), completing the Lemma.

\subsection{Proof of Lemma \ref{AAndRThenCLemma10.1}}\label{SectionCFailUnlikely}
We now begin the final step of the proof, namely that $\Pr(\mathcal{AR\overline{C}})=n^{-\omega (1)}$. We maintain our use of notation from the previous chapter with $G=\supGNP$, and $\mathcal{R}$ and $\mathcal{A}$. In Section \ref{the_heart_of_the_matter}, we have shown that $\mathcal{AR\overline{C}}$ results in the following;

There exist $x$ and $y$; vertices in the same partition set of $G=\supGNP$ and a set $Y$ with a single vertex from each of the remaining partitions, such that there exist, $a,b\in range(\weight_y) (=range(\weight'))$, such that $J:=\weight^{-1}_y ([a,b])$ satisfies
\[|J|>\Omega(|\supGr(y,G-R)),\]
and
\[\weight_y(J)> 0.7\weight_y(\supGr(y,G-R))=0.7\weight (R).\]
With, $J'=\weight^{-1}_x ([a,b])$, we also have,
\[\weight_y(J)>0.7 \weight(R),\]
and
\[\weight_x(J')\leq \weight(S)<0.5\weight(R).\]

All that remains is to show that the probability of this event is $n^{-\omega(1)}$.

Once again, we can express $\weight_y(J)$ and $\weight_x(J')$ as evaluations of a multi-linear polynomial in variables $\{t_u:u\in W\}$, once we have conditioned on the value of $G[W]$, in the following way. Given a set $U\subseteq W$ with $|U|\leq \nVH-1$ and at most one vertex from each partition, let $G_U^*$ be the graph obtained from $G[W]$ by adjoining a vertex $w^*$ say, with neighbourhood $U$. We let $\mathcal{K}_U$ be the set of copies of \supGr\ in $G_U^*$, containing $\{w^*: u\in U\}$, and
\[\alpha_U=\sum\{\weight'(V(K)\backslash\{w^*\}):K\in \mathcal{K}_U, \weight'(V(K)\backslash \{w^*\})\in [a,b]\}.\]
Our polynomial now becomes,
\[g(t)=\sum_{U\subseteq W}\alpha_U t_U,\]
and $\weight_y(J)$ and $\weight_x(J')$ are simply $g$ evaluated at the point $t':=\textbf{1}_{\{z\in W:yz\in E(G)\}}$ and $t'':=\textbf{1}_{\{z\in W:xz\in E(G)\}}$.

$\mathcal{R}$ tells us that $D_G(y)=\Theta(n^{\nVH-1}p^{\nEH})$ and therefore, considering that at most $o(n^{\nVH-1}p^{\nEH})$ copies of $H$ lie in $G$ and contain $y$ and meet $R$, we have $|\supGr(y,G-R)|=\Theta(n^{\nVH-1}p^{\nEH})=\omega(\log n)$. 

We know from our conditioning of $J$, that it satisfies
\[|J|=\Theta(n^{\nVH-1}p^{\nEH}),\]
and
\begin{equation}\label{58W_YBound}
\weight_y(J)=\Theta(bn^{\nVH-1}p^{\nEH}).
\end{equation}
We now apply a concentration result, namely Corollary \ref{5.7Concentration}. To apply it, we require a bound on the expectation of $f$, namely, that if $Ef\leq A$, where
\[A\geq \omega(\log n) + n^\epsilon \max_{l\neq \emptyset}E'_L f.\]
then the corollary gives us $\Pr(f>(1+\epsilon)A)=n^{-\omega(1)}.$
We fix $T\subseteq W$, with $|T|=l<\nVH$, and as always, each vertex from a different partition set. For $d=l,\dots,\nVH-1,$, and $t$ as $t=(t_e: e\in E(KM_{W}))$, we consider the polynomial,
\[h_d(t)=\sum_z\sum_{\phi} t_{\phi(E(\supGr -z))},\]
Where $z$ ranges over vertices of \supGr\ of degree $d$ and $\phi$ over the injections $V(H)\backslash \{z\}\rightarrow W$ with $\phi(N_z)\supseteq T.$ Then we know that
\[\alpha_T\leq b\ h_l(t).\]
Since $h_l(t)$ is the number of such sets, and we know that their values are bounded by $b$, this also gives us
\[\E_T ' g \leq b \sum_{d>l} p^{d-l} h_d(t),\]
where $\E_T '$ is the non-constant part of the partial derivative.

We let
\[\E_d^*=\max\{E_L h_d:L\subseteq E(KM_{W}),|L|<\nEH-d\}.\]
As in the previous chapter, similarly to (\ref{pLessThanEpsilon50}) we can assert that there is positive constant $\epsilon$ dependent only on \supGr, such that, for each $d$,
\[p^{d-l} \E_d^*<n^{-\epsilon}n^{\nVH-1}p^{\nEH}.\]
This follows from the proof of (\ref{pLessThanEpsilon50}) in the previous chapter, by noting that in our definition of $h_d$, there are only finitely many $z$, and the inner sum, is bounded by the polynomial \sumOverPhiPolynomialNotH, used in (\ref{pLessThanEpsilon50}), with $r=1$, $a_1=z$ and hence $d_r=d$.

We are now able to apply the concentration results we require. We consider the polynomial $f=\alpha^{-1} g$, where $\alpha=\max_U \alpha_U$. Then we have, using (\ref{58W_YBound}),
\[f(t')=\alpha^{-1}\weight_y(J)=\Theta(\alpha^{-1} bn^{\nVH-1} p^{\nEH})\]
and using the same arguments as we used at the end of the previous chapter, we can show
\[f(t')=\omega(\log n)\]
and
\[\max\{\E_T' f: T\subseteq W, T\neq \emptyset\}=n^{-\Omega(1)}f(t').\]
For the final step, we first summarise that we have shown $\mathcal{AR\overline{C}}$ implies that there exist $Y,\ x,\ y$ and $a,\ b\in range(\weight')$ for which we have the above two conditions, and
\[f(t'')<0.8f(t').\]
However, we know that for any given choice of $Y,\ x,\ y,\ a,$ and $b$, $f$ depends only on $G[W]$. Equally, if $G[W]$ is fixed, then $t$ and $t'$ are independent random variables, with `law' $Bin(W,p)$ (i.e. $t=(t_w: w\in W)$ has `law $Bin(W,p)$' if each $t_w$ is an independent Bernoulli with mean $p$.) We finish with the final claim of \cite{JKV}, which can be applied directly to our result here, without generalisation, and completes the proof.

\newtheorem{claim}{Claim}
\begin{claim}
For any $\epsilon >0$ and $d$ the following holds. If $f$ is a multilinear, normal polynomial of degree at most $d$ in $n$ variables, $\zeta (n) =\omega (\log n)$, and $t', t''$ are independent , each with $law\ Bin([n], p),$ then
\[\Pr(f(t')> \max \{\zeta (n), n^\epsilon \max_{T\neq \emptyset} E_{T}^{'} f , (1+\epsilon ) f(t'')\})=n^{-\epsilon}.\]
Because there are only polynomially many possibilities for $Y,x, y, a$ and $b$, this gives us Lemma \ref{AAndRThenCLemma10.1}.
\end{claim}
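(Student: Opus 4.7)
The plan is a two-case analysis driven by the size of $\E f$ relative to the threshold $B := \max\{\zeta(n),\ n^{\epsilon}\max_{T\neq\emptyset}\E_T' f\}$. Fix a small auxiliary constant $\delta$ with $0 < \delta < \epsilon/10$, chosen so that $(1+\delta)/(1-\delta) < 1+\epsilon$. The event in the claim forces both $f(t') > B$ and $f(t') > (1+\epsilon)f(t'')$, so in each case it suffices to show that one of these two inequalities fails with probability $1-n^{-\omega(1)}$ (interpreting the ``$n^{-\epsilon}$'' on the right-hand side as $n^{-\omega(1)}$, in line with the surrounding argument). Since $\zeta(n) = \omega(\log n)$, we have $B \geq \omega(\log n)$ automatically.

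In the ``small expectation'' case $\E f \leq B/(1+\delta)$, set $A := B/(1+\delta)$. Then $A \geq \omega(\log n)$ and $A \geq n^{\epsilon}\max\E_T' f / (1+\delta) \geq n^{\delta}\max\E_T' f$ for large $n$, so Corollary~\ref{5.7Concentration} applied with constant $\delta$ yields $\Pr(f(t') > (1+\delta)A) = \Pr(f(t') > B) \leq n^{-\omega(1)}$. Off this negligible event the requirement $f(t') > B$ fails, and so does the claim's event.

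In the ``large expectation'' case $\E f > B/(1+\delta)$, we have $\E f > \zeta(n)/(1+\delta) = \omega(\log n)$, and $\max\E_T' f \leq B/n^{\epsilon} \leq (1+\delta)\E f/n^{\epsilon} \leq n^{-\epsilon/2}\E f$ for large $n$. Theorem~\ref{5.6Concentration} applied with constant $\delta$ then gives $|f(t') - \E f| \leq \delta\E f$ and $|f(t'') - \E f| \leq \delta\E f$, each with probability $1 - n^{-\omega(1)}$. Since $t'$ and $t''$ are independent (conditional on $G[W]$, which is what determines the coefficients of $f$), both concentrations hold off a single $n^{-\omega(1)}$ event, and on that good event $f(t') \leq (1+\delta)\E f \leq \frac{1+\delta}{1-\delta}f(t'') < (1+\epsilon)f(t'')$, contradicting the claim's condition.

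Combining the two cases gives the required $n^{-\omega(1)}$ bound for the claim's event. Lemma~\ref{AAndRThenCLemma10.1} follows by a union bound over the polynomially many choices of $(Y, x, y, a, b)$, as already noted in the excerpt. The main ``obstacle'' is purely bookkeeping: picking $\delta$ small enough so that the $(1+\delta)$ slack coming from Corollary~\ref{5.7Concentration} in the small-expectation regime is absorbed inside $B$, while simultaneously the ratio $(1+\delta)/(1-\delta)$ from Theorem~\ref{5.6Concentration} in the large-expectation regime stays strictly below $1+\epsilon$; once that choice is made, the two cases exhaust all possibilities and each appeal to the concentration results of Section~\ref{concentration} is routine.
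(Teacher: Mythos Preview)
Your proof is correct and follows essentially the same two-case split as the paper: compare $\E f$ to a threshold of the form $\Theta(B)$ where $B=\max\{\zeta(n),\,n^{\epsilon}\max_{T\neq\emptyset}\E_T' f\}$, invoke Corollary~\ref{5.7Concentration} in the small-expectation case and Theorem~\ref{5.6Concentration} (applied to both $t'$ and $t''$) in the large-expectation case. The only differences are cosmetic: the paper takes the threshold to be $A=B/2$ and uses $\epsilon/3$ as the concentration constant, whereas you take $A=B/(1+\delta)$ and constant $\delta$ so that $(1+\delta)A=B$ exactly; both choices work for the same reasons.
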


$Proof\ of\ claim$. Set
\[ A=\frac{1}{2} \max \left\{ \zeta(n), n^{\epsilon} \max_{T\neq \emptyset} E^{'}_{T} f.\right\}.
\]
If $\E f \leq A$ then Corollary \ref{5.7Concentration} gives
\[\Pr(f(t')> A)=n^{-\omega(1)};\]
otherwise, by Theorem \ref{5.6Concentration},
\[Pr (f(t')>(1+\epsilon) f(t'')) <\Pr(\max\{|f(t')-\E f|, |f(t'')-\E f|\} > (\epsilon /3) \E f)=n^{-\omega(1)}.\]

\qed

\section{Theorems \ref{MainTheoremPartionResult} and \ref{MainTheoremDigraphResult}}

To prove Theorem \ref{MainTheoremPartionResult}, we note that our proof of Theorem \ref{MainTheoremPartionMultigraphResult} does not require the collapsed graph to be a multigraph. Throughout, we only require that the graph does not contain any subgraphs of density $m(H)$, and this follows, in our main result, from the vertex collapsing technique, but as in \cite{JKV}, it can also follow from strict balance of \supGr, or equivalently $H$. Since our partitions were fixed only by the partial embedding of subgraphs of density $m(H)$, in the strictly balanced case, we can simply choose our vertex partitions freely, and then continue with the proof.

Lastly, to prove \ref{MainTheoremDigraphResult}, we use Theorem \ref{MainTheoremPartionResult}. Partition the vertices of \gnp\ as usual into \nV\ equal sets, either as a result of vertex collapsing dense subgraphs, or freely in the strictly balanced case. In the former case, embedding the partial factors of dense directed subgraphs, requires a simple modification of Theorem \ref{partial_factors}, which follows the same arguments as the original but for directed graphs. A proof of which will appear in \cite{AJM_PhDThesis}. We again consider the edges between the partitions, only where they correspond to edges in $H$. Now however, we can consider only the edges that are in the direction we require, which are distributed uniformly and independently at random, but with edge probability $p/2$, since we discount those edges in the wrong direction. At this point, the random digraph is equivalent to $\supGr(n,p/2)$, and we apply \ref{MainTheoremPartionResult} directly, providing the directed factor as required.

\section{Conclusion}
With these results, Conjecture~\ref{MainConjecture} is now proven for both strictly balanced graphs, and all non-vertex balanced graphs. The methods used here also can be used to prove that the conjecture holds for a wide range of vertex-balanced $H$.

We can prove a threshold for a variety of `necklace' and related graphs formed of copies of a dense graph linked by a `supergraph' of equal or lower density.

\newcommand{\AThickness}{0.75mm}
\newcommand{\ACirclePt}{4pt}
\newcommand{\AScale}{1}

\begin{center}
\pgfdeclarelayer{background}
\pgfsetlayers{background,main}
\begin{tikzpicture}[scale=\AScale]

%Red Square
		\path (11,2) node (Y12) {};
		\fill (Y12.east) circle (\ACirclePt);
		\path (13,2) node (Y32) {};
		\fill (Y32.east) circle (\ACirclePt);
		\path (11,3) node (Y13) {};
		\fill (Y13.east) circle (\ACirclePt);
		\path (13,3) node (Y33) {};
		\fill (Y33.east) circle (\ACirclePt);

	%draw nodes from X20 to X41

		\path (5,4) node (X54) {};
		\fill (X54.east) circle (\ACirclePt);

		\foreach \j in {2,4,6,8}
		{
			\path (\j,3) node (X\j3) {};
			\fill (X\j3.east) circle (\ACirclePt);
		}

		\foreach \j in {1,3,5,7,9}
		{
			\path (\j,2) node (X\j2) {};
			\fill (X\j2.east) circle (\ACirclePt);
		}

		\foreach \j in {4,6}
		{
			\path (\j,1) node (X\j1) {};
			\fill (X\j1.east) circle (\ACirclePt);
		}
		\begin{pgfonlayer}{background}
			%triangle 1
			\draw[line width=\AThickness] (X12.east)--(X32.east);
			\draw[line width=\AThickness] (X23.east)--(X32.east);
			\draw[line width=\AThickness] (X12.east)--(X23.east);
			%triangle 2
			\draw[line width=\AThickness] (X41.east)--(X61.east);
			\draw[line width=\AThickness] (X41.east)--(X52.east);
			\draw[line width=\AThickness] (X52.east)--(X61.east);
			%triangle 3
			\draw[line width=\AThickness] (X72.east)--(X92.east);
			\draw[line width=\AThickness] (X83.east)--(X92.east);
			\draw[line width=\AThickness] (X72.east)--(X83.east);
			%triangle 4
			\draw[line width=\AThickness] (X43.east)--(X63.east);
			\draw[line width=\AThickness] (X63.east)--(X54.east);
			\draw[line width=\AThickness] (X54.east)--(X43.east);

			%Invisible nodes
			\path (9.3,2) node (hidden1) {};
			\path (10.7,2) node (hidden2) {};
			\path [draw, -latex', line width=4]  (hidden1) -- (hidden2);

			\draw[line width=\AThickness][dotted] (X12.east)--(X41.east);
			\draw[line width=\AThickness][dotted] (X63.east)--(X83.east);
			\draw[line width=\AThickness][dotted] (X32.east)--(X43.east);
			\draw[line width=\AThickness][dotted] (X61.east)--(X72.east);

			\draw[line width=\AThickness][dotted] (Y12.east)--(Y32.east);
			\draw[line width=\AThickness][dotted] (Y13.east)--(Y33.east);
			\draw[line width=\AThickness][dotted] (Y12.east)--(Y13.east);
			\draw[line width=\AThickness][dotted] (Y33.east)--(Y32.east);

		\end{pgfonlayer}
\end{tikzpicture}
\end{center}

As an example, in the above case, the threshold for finding a copy of this `triangle necklace' must be at least that of a triangle factor, and Conjecture~\ref{MainConjecture} suggests it should be equal. We prove this by first embedding a triangle factor, and then use \ref{MainTheoremPartionResult} to embed the dotted edges that form a less dense cycle in the supergraph. Provided the resulting graph or multigraph from the collapsing process is less dense than $m(H)$, we will always be able to apply Theorem \ref{MainTheoremPartionMultigraphResult} to find this. 

The supergraph cannot ever be denser than $m(H)$, as this implies a subgraph of density greater than $m(H)$, which is a contradiction. So, all that remains to consider are graphs where we will be left with a supergraph of equal density, after collapsing all subgraphs. We know this must be strictly balanced, or we would have continued to collapse its subgraphs and so we can apply Theorem \ref{MainTheoremPartionMultigraphResult}. However, the supergraph may have fewer edges than each of the collapsed subgraphs, and certainly has fewer than the original graph. This may force it to require a higher threshold, by a constant power of $\log$ to embed, and so will not prove the conjecture.
\newcommand{\QThickness}{0.75mm}
\newcommand{\QCirclePt}{4pt}
\newcommand{\QScale}{1}

\begin{center}
\pgfdeclarelayer{background}
\pgfsetlayers{background,main}
\begin{tikzpicture}[scale=\QScale]

	%draw nodes from X20 to X41

		\foreach \j in {1,3,5}
		{
			\path (\j,1) node (X\j1) {};
			\fill (X\j1.east) circle (\ACirclePt);
		}
		\foreach \j in {2,5}
		{
			\path (\j,2.5) node (X\j3) {};
			\fill (X\j3.east) circle (\ACirclePt);
		}

			\draw[line width=\AThickness] (X11.east)--(X31.east);
			\draw[line width=\AThickness] (X23.east)--(X31.east);
			\draw[line width=\AThickness] (X11.east)--(X23.east);
			\draw[line width=\AThickness] (X23.east)--(X53.east);
			\draw[line width=\AThickness] (X51.east)--(X31.east);
			\draw[line width=\AThickness] (X51.east)--(X53.east);

\end{tikzpicture}
\end{center}
For the above graph, the conjectured threshold is $n^{-2/3}(\log n)^{1/5}$, but using the methods within this paper, we are only able to embed it at $p=\Order(n^{-2/3}(\log n)^{1/3})$, since collapsing the triangle will leave us with another triangle, requiring the higher $\log$ term to embed.

In light of this, we see that we can prove Conjecture~\ref{MainConjecture} for all $H$, except for those mentioned above, for which we are still within a constant power of $\log$ of the conjectured bound.

% This is the Janson, Luczak and Rucinski book \cite{0471175412_Janson_Luczak_Rucinski_RandomGraphs}
% This is the factors paper \cite{JKV}
% This is the spanning tree paper by Krivelevich \cite{2010arXiv1007.2326K_Spanning_trees_Krivelevich}
%This is the min degree less than density paper \cite{CambridgeJournals:1771816_Alon_Noga_yuster_Raphael_Factors}

%this is the extension threshold paper \cite{DBLP:journals/jct/Spencer90a:Extension_Thresholds}

\bibliography{AJM_bibliography}

\end{document}